\documentclass[12pt]{amsart}
\usepackage{latexsym,amssymb,amsmath,amscd} 

\usepackage{tikz,enumerate}
\usepackage{pb-diagram, pb-xy}
\usepackage[all]{xy}

\usepackage[centering,margin=1in]{geometry}
\usepackage[numbers]{natbib}

\theoremstyle{plain}
 \newtheorem{theorem}{Theorem}[section]
 
  \newtheorem{lemma}[theorem]{Lemma}
  \newtheorem{corollary}[theorem]{Corollary}

 \newtheorem{definition}[theorem]{Definition}
  \newtheorem{example}[theorem]{Example}

  \newtheorem{remark}[theorem]{Remark}
	 \newtheorem{remarks}[theorem]{Remarks}


\def\ZZ{{\mathbb Z}}

\def\RR{{\mathbb R}}

\def \integers{\mathbb{Z}}
\def \reals{\mathbb{R}}

\def \Bzero{\mathbf{0}}
\def \g{\mathfrak{g}}
\def \wt{\mathrm{wt}}
\renewcommand{\l}[1] {
l^{J}_{#1}
}
\newcommand{\inner}[2]{
\langle #1, #2 \rangle 
}
\def \A{\mathcal{F}}
\def \sgn{\mathrm{sgn}}
\newcommand{\casetwo}[4]{\left\{ \begin{array}{ll} #1 &\mbox{if $#2$} \\[2mm] #3 &\mbox{if $#4$}\,. \end{array} \right.}
\newcommand{\floor}[1]{\lfloor #1 \rfloor}

\newlength{\cellsize}
\cellsize=2.5ex


\newcommand\tableau[1]{
\vcenter{
\let\\=\cr
\baselineskip=-16000pt
\lineskiplimit=16000pt
\lineskip=0pt
\halign{&\tableaucell{##}\cr#1\crcr}}}


\newcommand{\tableaucell}[1]{{%
\def \arg{#1}\def \void{}%

\ifx \void \arg
\vbox to \cellsize{\vfil \hrule width \cellsize height 0pt}%
\else
\unitlength=\cellsize
\begin{picture}(1,1)
\put(0,0){\makebox(1,1){$#1$}}
\put(0,0){\line(1,0){1}}
\put(0,1){\line(1,0){1}}
\put(0,0){\line(0,1){1}}
\put(1,0){\line(0,1){1}}
\end{picture}%
\fi}}
%


\begin{document}

\title
{From the weak Bruhat order to crystal posets}
\author{Patricia Hersh}
\email{plhersh@ncsu.edu}

\author{Cristian Lenart}
\email{clenart@albany.edu}

\thanks{
The first author was funded by NSF grants  DMS-1200730 and DMS-1500987.  The second author was funded by NSF grants DMS-1101264 and  DMS-1362627.
\\
Keywords: crystal graph, weak Bruhat order, key map, Stembridge moves, order complex \\
MSC: 05E10, 06A06, 20F55, 20G42, 57N60}


\begin{abstract}
We investigate the ways in which fundamental properties of the weak Bruhat order on a Weyl group can be lifted (or not) to a corresponding highest weight crystal graph, viewed as a partially ordered set; the latter projects to the weak order via the 
key map.  First, a crystal theoretic analogue of the statement that any two reduced expressions for the same Coxeter group element are related by Coxeter moves is proven for all lower intervals $[\hat{0},v]$ in a simply or doubly laced crystal.  On the other hand, it is shown that no finite set of moves exists, even in type $A$, for arbitrary crystal graph intervals.  In fact, it is shown that there are relations of arbitrarily high degree amongst crystal operators that are not implied by lower degree relations.   
Second, for crystals associated to  Kac-Moody algebras  it is shown for lower intervals that 
the M\"obius function is always $0$ or $\pm 1$, and in finite type this is also proven  for upper 
intervals, with a precise formula given  in each case. 
Moreover, the order complex for each of these  intervals is proven  to be homotopy equivalent to a  ball or to a sphere of some dimension, despite often not being shellable.
For  general intervals, examples are constructed  
with arbitrarily large M\"obius function, again even in type $A$.  Any interval having M\"obius function other than $0$ or $\pm 1$ is shown to contain within it a relation amongst crystal operators that is not implied by the relations giving rise to the local structure of the crystal, making precise a tight  relationship between M\"obius function and these somewhat unexpected relations appearing in crystals.  

New properties of the key map are also derived.  The key is shown to be determined entirely by the edge-colored poset-theoretic structure of the crystal, and a recursive algorithm is given for calculating it.  In finite types, the fiber of the longest element of any parabolic subgroup of the Weyl group is also proven to have a unique minimal and a unique maximal element; this property fails for more general elements of the Weyl group. 
\end{abstract}

\maketitle

\color{black}
\section{Introduction}
\label{definition-section}
\color{black}

Crystal graphs, introduced by Kashiwara \cite{Kas0}, are a powerful tool in the representation theory of symmetrizable Kac-Moody algebras and their quantum algebras. They are colored directed graphs encoding important data regarding quantum group representations when the quantum parameter goes to $0$. In this paper, a crystal always means one associated with an integrable highest weight representation of a symmetrizable Kac-Moody algebra (more precisely, of the corresponding quantum algebra). Several combinatorial models for crystals exist, ranging from uniform models, for all Lie types (such as the Littelmann path model, the alcove model, the Lusztig and string parametrizations of the canonical basis), to specialized models which only work in type $A$ (semistandard Young tableaux) or classical types. We will use both types of models in this paper, as well as a more abstract viewpoint due to Stembridge for simply laced crystals. 

Highest weight crystals are, in fact, partially ordered sets (posets), with cover relations $u\lessdot v$ 
whenever  there exists $i$ such that $v = f_i(u)$ for $f_i$ a crystal operator (defined shortly).  It is useful  to color such a  cover relation $i$, thereby giving the crystal  the structure of an edge-colored poset.   The resulting crystal posets project to the left weak  Bruhat order on the corresponding Weyl group via the so-called key map (cf. \cite{LS,Li,Reiner-Shimozono}). Previous interest in this poset map stems from the fact that it detects the smallest Demazure subcrystal containing a given crystal vertex. 

One main 
theme in 
our work is to show how  many well-known properties of the weak Bruhat order can be lifted to the crystal poset via the key.  However, there is a dichotomy in crystals in that we show that the properties under consideration  hold for lower intervals $[\hat{0},v]$ (and hence in finite type also for upper intervals $[v,\hat{1}]$), whereas we also construct  intervals  $[u,v]$ that fail to have these same properties, in fact  failing in arbitrarily extreme ways (as explained later). This dichotomy 
does not exist in the 
 weak Bruhat order, 
since  any interval $[u,v]$  in the weak Bruhat order  is isomorphic to a lower interval $[e,vu^{-1}]$.   
 
This work  led  us to  develop  new properties of the key along the way. First, in Section~\ref{sec:keyalg}, we give an efficient recursive algorithm to calculate the key.  In the process, we demonstrate that the key is determined  entirely by the edge-colored  poset-theoretic structure of the crystal graph in a uniform way. This algorithm works in the level of generality of symmetrizable Kac-Moody algebras. Second, we prove in  Theorem~\ref{unique-min} that the subposet of a 
 crystal poset mapping under the key to the longest element $w_\circ (J)$ of any parabolic subgroup $W_J$ has  unique minimal and maximal elements; we also give examples showing that this property fails for arbitrary Weyl group elements.  



A fundamental (and quite useful)  property of  Coxeter groups $W$ is the 
so-called ``word property'':
 any two reduced expressions $r_1,r_2$  for an element $w\in W$  are connected by a series of braid moves.  In other words, $r_2$ may be obtained from $r_1$ by  a series of steps each either (a) replacing some  $s_is_j$ by $s_js_i$, in the case that  $(s_is_j)^2 = e$, or (b) more generally replacing the alternation $s_is_js_i  \dots $ of two letters of length $m(i,j)$ by the alternation $s_js_is_j\dots   $ again of length $m(i,j)$, assuming that  $(s_is_j)^{m(i,j)} = e$ and that $m(i,j)$ is as small as possible with this property.
The word property  appears e.g. as Theorem~3.3.1 in \cite{BB},  and for type $A$ also appears as Theorem~1.1.2 in \cite{Garsia}.   We will want to use the following  equivalent, poset-theoretic formulation of the word property: for any $u\le v$ in weak Bruhat order, all saturated chains from $u$ to $v$ are connected by braid moves (see Proposition 3.1.2 in \cite{BB}), using the natural identification of saturated chains from $u$ to $v$ with reduced expressions for $vu^{-1}$ that is obtained  by placing the label $s_i$ on  $u\lessdot s_i u$ and reading off the sequence of labels encountered in proceeding up a saturated chain.  Another fundamental property of weak Bruhat order is that its M\"obius function $\mu(u,v)$  only takes values $0,\pm 1$, and in fact each open interval $(u,v)$ has order complex homotopy equivalent to a ball or a sphere.  

We will develop analogues of both of these properties of weak order  for crystal graphs,  in Theorem~\ref{connect-moves} and Corollary~\ref{Moebius-theorem}, respectively, but only for lower intervals (along with upper intervals in finite types).  The role of braid relations is played in the context of crystals of simply laced type  by the local relations 
$f_i f_j (u) = f_j f_i (u)$ and $f_i f_j f_j f_i (v) = f_j f_i f_i f_j (v)$ amongst crystal operators developed by John Stembridge.  
 The connectedness result, Theorem ~\ref{connect-moves}  in the simply laced case, 
holds for simply and doubly laced crystals. 
The topological result, Theorem ~\ref{homotopy-proof},  holds for all crystals  coming from 
symmetrizable Kac-Moody algebras.  
One might then  expect such results for arbitrary crystal  poset 
intervals $[u,v]$.  However,  we give examples in type $A$, in Section~\ref{negative-section},
  showing that the corresponding statements are false in general, and in fact fail to arbitrarily extreme degrees.    
 {We also exhibit  in  Example~\ref{non-lattice}  that type $A$ crystal posets are not always lattices. }

For crystals arising from symmetrizable Kac-Moody algebras (including in finite type), 
we prove in Corollary  ~\ref{Moebius-theorem}  that  $\mu (\hat{0},v) $ only takes values $0,\pm 1$, and  likewise for  $\mu (v,\hat{1})$ when a unique maximal element $\hat{1}$  exists within the crystal.  This follows from a stronger homotopy-theoretic statement proven in Theorem ~\ref{homotopy-proof}, by the interpretation of the M\"obius function as reduced Euler characteristic.  
More specifically, we deduce that $\mu (\hat{0},v) = 0$ unless the key of $v$ is the longest element  $w_\circ (J)$ of a parabolic subgroup $W_J$ and $v$ is the minimal element of the crystal having key $w_\circ (J)$,  in which  case we have $\mu (\hat{0},v) = (-1)^{|J|}$.  {On the other hand, we construct in Theorem~\ref{big-Moebius} intervals $[u,v]$ in type $A$ with $\mu (u,v)$ arbitrarily large in value.}  Similarly  to the weak Bruhat order (cf. \cite{BB}, \cite{Bj2},\cite{Edelman},\cite{Edelman-Walker}), these crystal posets are typically not shellable (a powerful topological condition developed extensively in \cite{Bjorner}, \cite{BW}), even when one restricts attention to the topologically well-understood  lower (or upper) intervals.  


  Our work leads  to new information about the structure of crystals coming from highest weight  representations, especially in the simply laced case.  Our results imply that the M\"obius function can be an efficient tool for detecting intervals in a crystal, regarded as a poset, where there must be relations amongst crystal operators that are not generated by the known set of  relations that one might expect to generate all relations.    First, we use M\"obius functions to discover  relations in Section ~\ref{negative-section}  that are not generated by those  of the form $f_i f_j (u) = f_j f_i (u) $ and $f_i f_j f_j f_i (u) = f_j f_i f_i f_j f_i (u)$  governing the local structure developed by Stembridge in  \cite{Stembridge}. 
  We had initially believed  that the M\"obius function would be $0, \pm 1$ everywhere, but  found counterexamples to this by computer search.  Upon closer examination, we  realized that these same examples also showed that these crystal posets were not always lattices and that not all relations among the  crystal operators are generated by the ones giving rise to  Stembridge's local structure.  Eventually we realized that  it  was not a coincidence that the same poset intervals  had both of these somewhat surprising features.
 Indeed we prove in Section ~\ref{SB-labeling}, specifically in Theorem ~\ref{moebius-yields-crystal-relations},  that  any interval with M\"obius function taking a value other than $0$ or $\pm 1$  must include within it  a relation amongst crystal operators not implied by Stembridge's local relations.  
  The main tool in developing this connection from M\"obius functions to these seemingly  unexpected relations amongst crystal operators was the theory of $SB$-labelings from \cite{HM}.

Now let us discuss briefly another type of motivation for this work.  The 
topological structure for fundamental posets arising in other parts of algebra, 
for instance finite group theory and commutative algebra,  
have played an important role there in characterizing important properties such as solvability and supersolvability in finite  group theory, and various  
properties of free resolutions including bounds on Betti numbers  in commutative algebra
(cf. \cite{Bjorner}, \cite{Eagon-Reiner}, \cite{HRW}, \cite{Miller-Sturmfels},  \cite{Peeva}, \cite{Shareshian}, \cite{Sh-W}, \cite{Smith}, \cite{St-supersolvable}).  It is natural to ask in other algebraic settings, such as the present one, 
whether topological properties of posets may again be useful as a tool for obtaining characterization results.
Corollary~\ref{crystal-type} shows that the crystals of highest weight $\rho$ (where, in type $A$, the weight $\rho$ is just the staircase partition) are special, in the sense that they have a particularly close structure to that of the weak order on the Weyl group, unlike the other crystals. 
{Corollary~\ref{Moebius-theorem} will  
directly  imply  that the M\"obius function alone can detect whether a given crystal element is the minimal element whose key is the longest element of a parabolic subgroup $W_J$.
This also yields a simple method to calculate the key of such elements $u$ directly, i.e. without needing to know the key of lower elements in the crystal poset.

Before proving the main  new results of this paper in Sections \ref{sec:keyalg}-\color{black} 
\ref{SB-labeling}, \color{black} 
we first provide background material,  which is organized as follows.  Sections~\ref{sec1} and  
~\ref{local-section} review general properties of crystals and their local structure, while Section~\ref{poset-topology-bg} provides background on partially ordered sets and on topological combinatorics.  These  sections,  together  with a suitable definition of  the key map (either taken from  Section~\ref{sectkey} or from Section~\ref{sec:keyalg}), will suffice to understand the proofs of our positive results regarding M\"obius functions, homotopy type, and connectedness of the set of saturated chains in  lower  and upper intervals under Stembridge moves.  Section~\ref{ssyt} reviews the SSYT model for type $A$ crystals, so as  to prepare  readers to understand our infinite families of examples giving negative results in Section~\ref{negative-section}, regarding M\"obius functions and connectedness for arbitrary type $A$ crystal poset intervals.  Section~\ref{alcmod} explains the more general alcove model for crystals, in preparation for the proof of Theorem~\ref{unique-min} on the fibers of the key map at longest elements of parabolic subgroups. 
Theorem~\ref{unique-min} will provide a vital  input into the proofs of our positive M\"obius function and homotopy type results, though it  can  be treated as a black box, e.g., by readers who are  more familiar with combinatorics  of posets than with  representation theory.  Section~\ref{sectkey} reviews the definition of the key map in terms of both the SSYT model and the alcove model, as well as abstract  properties of the key.   Section~\ref{sec:keyalg} 
 provides  a new, elementary algorithm for calculating the key, which is based on abstract properties of the key map.  Some readers may find it convenient  to take 
the output of  this algorithm as the  definition of the key map.  
Now let us commence with our   
 review of background material.  

\subsection*{Acknowledgments}

We thank Karola M\'esz\'aros, 
Jennifer Morse, 
Victor Reiner, Anne Schilling, Mark Shimozono, and John Stembridge for valuable discussions related to several aspects of this paper.

\section{Basic setup and background}

\subsection{Basic notions related to crystal graphs}\label{sec1}

Crystals are colored directed graphs encoding certain representations of quantum groups $U_q({\mathfrak g})$ in the limit $q\to 0$. We will only refer here to the highest weight integrable representations of symmetrizable Kac-Moody algebras, all of which are known to possess crystals. Given a dominant weight $\lambda\in\Lambda^+$ (where $\Lambda$ denotes the set of weights), we denote by $B=B(\lambda)$ the crystal of the irreducible representation $V(\lambda)$ of highest weight $\lambda$, which we call a {\em ${\mathfrak g}$-crystal}. Its vertices correspond to the so-called {\em crystal basis} of $V(\lambda)$, and its edges partially encode the action of the Chevalley generators $e_i,f_i$ of $U_q({\mathfrak g})$ on this basis; therefore, the edges are labeled by $i\in I$ (also called colors) indexing the corresponding simple roots $\alpha_i$. Note that if $\mathfrak g$ is a semisimple Lie algebra, then the corresponding root system is finite, and so are all the corresponding ${\mathfrak g}$-crystals $B(\lambda)$; otherwise, these crystals are infinite. 

We briefly recall the standard terminology related to root systems in Section~\ref{rootsys}, by focusing on the finite case, in preparation for presenting the alcove model for crystals (in Section~\ref{alcmod}). We now review some basic facts about crystals, and refer the reader to \cite{HK} for more details.  We start by defining a larger class of objects called crystals in an abstract way, and then we discuss which of them are ${\mathfrak g}$-crystals. 

\begin{definition}\label{defcr} {\rm A crystal $B$ is a directed graph with labeled edges satisfying axioms {\rm (P1)} and {\rm (P2)} below; the  vertex set is also denoted $B$, and the label on the edge $b\stackrel{i}{\rightarrow} b'$ is an element of $I$. 
\begin{enumerate}
\item[{\rm (P1)}] All monochromatic directed paths have finite length; in particular, $B$ has no monochromatic circuits.  
\item[{\rm (P2)}] For every vertex $b$ and every color $i\in I$, there is at most one edge $b\rightarrow b'$ with color $i$, and at most one edge $b''\rightarrow b$ with color $i$.
\end{enumerate}}
\end{definition}

The crystal operators $e_i,f_i : B\rightarrow B\cup 
\{ \Bzero \}$ for $i\in I$ (where $\Bzero \not \in B$) are defined by letting $f_i(b)= b'$ and $e_i(b')=b$ for each $i$-colored edge $b\rightarrow b'$; if there is no such edge, we set  $f_i(b)=e_i(b')=\Bzero$. Define the $i$-string through $b\in B$ to be the maximal path of the form $$e_i^{d}(b) \rightarrow\cdots
\rightarrow e_i(b) \rightarrow b \rightarrow f_i(b) \rightarrow\cdots\rightarrow f_i^r(b)$$ for some $r,d\ge 0$.   Then the {\em $i$-rise} of $b$, denoted $\varepsilon (b,i)$, is the nonnegative integer $r$, while the {\em $i$-depth} of $b$, denoted $\delta (b,i)$, is the nonpositive integer $-d$. 

The $\g$-crystals are also endowed with a weight function $\wt:B \to \Lambda$, which is subject to the condition: 
\[\wt(f_i(b))=\wt(b)-\alpha_i\,.\]
Using the weight function, we can express combinatorially the character of the representation $V(\lambda)$ as a positive linear combination of formal exponentials in the group algebra of the corresponding weight lattice:
\begin{equation}\label{charf}{\rm ch}\,V(\lambda)=\sum_{b\in B(\lambda)} x^{\wt(b)}\,.\end{equation}

 Now recall that, for a Weyl group element $w$, the {\em Demazure module} $V_w(\lambda)$ is the submodule of $V(\lambda)$ generated  from  an  extremal
weight  vector  of weight $w\lambda$ by  the  action  of  an  upper  triangular  subalgebra $U_q^+(\mathfrak{g})$ of the quantum group $U_q(\mathfrak{g})$. The corresponding {\em Demazure subcrystal} of $B(\lambda)$ is denoted $B_w(\lambda)$, and the Demazure character ${\rm ch}\,V_w(\lambda)$ is expressed in a similar way to \eqref{charf}, as a sum over $B_w(\lambda)$. Similarly, one defines the {\em opposite} Demazure module and crystal by using the lower triangular subalgebra $U_q^-(\mathfrak{g})$ of $U_q(\mathfrak{g})$.

Besides character formulas, crystals are used to derive combinatorial formulas for decomposing tensor products of irreducible representations, as well as their inductions and restrictions. 

We are interested in the partially ordered set structure on a $\g$-crystal, called a {\em crystal poset}, which is defined by its cover relations $b\lessdot b'$ with $b'=f_i(b)$ for some $i$. The crystal poset $B(\lambda)$ always has a minimum $\hat{0}$, satisfying $\wt(\hat{0})=\lambda$. On the other hand, the maximum $\hat{1}$ only exists in the finite case, in which case  we have $\wt(\hat{1})=w_\circ\lambda$, where $w_\circ$ is the longest element of the corresponding (finite) Weyl group. Note that this convention is opposite to the one in representation theory, where the minimum and maximum here are the so-called highest and lowest weight vertices of the $\g$-crystal, respectively. We will study crystal posets using poset theoretic  techniques. 
See Section ~\ref{poset-topology-bg} or see \cite{St} for further background on posets. 

\begin{remarks}\label{labint}{\rm (1) Based on the weight function, it is easy to see that the crystal posets are graded. In addition, all the maximal chains in a given crystal poset interval have the same multiset of edge labels. We will use these facts implicitly.

(2) Finite type $\g$-crystals are self-dual posets. More specifically, there is the so-called {\em Lusztig involution} $S$ on the $\g$-crystal, with the property: 
\[S(f_i(b))=e_{i^*}(S(b))\,,\]
where $i\mapsto i^*$ is the permutation of the simple roots determined by the longest Weyl group element: $w_\circ(\alpha_i)=-\alpha_{i^*}$. 

(3) One subtlety deserving particular attention  
 is that the crystal posets are not always 
 lattices; see Example~\ref{non-lattice}. \color{black}Note, however, that $A_2$-crystal posets are lattices, cf. \cite[Remark~4 and Proposition~5.4]{DKK0}.\color{black}

(4) Besides the highest weight crystals considered in this paper, there are other types of crystals. For instance, we have the {\em Kirillov-Reshetikhin (KR) crystals} for affine Lie algebras, which are finite, as opposed to the affine highest weight crystals. However, the KR crystals do not have a poset structure. 
}
\end{remarks}

\subsection{Characterization of $\g$-crystals}\label{local-section}
 We recall Stembridge's local characterization of $\g$-crystals in the case of simply laced root systems \cite{Stembridge}, \color{black} as well as the equivalent characterization in \cite{DKK0}; it is useful to note that the Stembridge relations are crystal graph analogues of the Verma relations. For doubly laced $\g$-crystals, we have a partial result in \cite{Sternberg}, and the corresponding local characterization in \cite{DKK}. \color{black} 

 We start with some notation. As usual, we denote by $a_{ij}$ the entries of the corresponding Cartan matrix, i.e., $a_{ij}=\langle\alpha_j,\alpha_i^\vee\rangle$. Recalling the notions of $i$-depth and $i$-rise introduced in Section~\ref{sec1}, we will use the symbol $\Delta_i$ as follows: 
\[\Delta_i \delta (b,j) := \delta (b',j) - \delta (b,j)\;\;\;\mbox{and}\;\;\;\Delta_i \varepsilon (b,j) := \varepsilon (b',j) - \varepsilon (b,j)\,,\;\;\;\mbox{whenever}\;\;\;b':=e_i (b)\ne\Bzero\,.\]
 Likewise, we set 
\[\nabla_i \delta (b,j) := \delta (b,j) - \delta (b',j)\;\;\;\mbox{and}\;\;\;\nabla_i \varepsilon (b,j) := \varepsilon (b,j) - \varepsilon (b',j)\,,\;\;\;\mbox{whenever}\;\;\;b':=f_i (b)\ne\Bzero\,.\]

In addition to axioms (P1) and (P2) in Definition~\ref{defcr}, Stembridge introduced the following axioms (P3)$-$(P6). For (P3) and (P4), we assume that $e_i(b)\ne\Bzero$, whereas for (P5) and (P6) we assume that $e_i(b)\ne\Bzero$ and $e_j(b)\ne\Bzero$.
\begin{enumerate}
\item[(P3)] $\Delta_i \delta(b,j) + \Delta_i \varepsilon (b,j) = a_{ij}$ for all $i,j\in I$.  
\item[(P4)] $\Delta_i \delta (b,j) \le 0$ and $\Delta_i \varepsilon (b,j)\le 0$ for all $i\ne j$.  
\item[(P5)]  $\Delta_i \delta (b,j) = 0$ implies $e_ie_j(b) = e_je_i(b)$ and $\nabla_j \varepsilon (b',i)=0$ where 
$b':=e_ie_j(b) = e_je_i(b)$.
\item[(P6)]  $\Delta_i \delta (b,j) = \Delta_j \delta (b,i) = -1$ implies $e_ie_j^2e_i (b) = e_j e_i^2 e_j(b)$ and $\nabla_i \varepsilon (b',j) = \nabla_j \varepsilon (b',i) = -1$ where $b':=e_ie_j^2e_i(b) = e_je_i^2e_j (b)$.
\end{enumerate}

Stembridge also formulated axioms (P$5'$) and (P$6'$) by replacing $e_i$ with $f_i$; these axioms are equivalent to (P5) and (P6).



\begin{theorem}{\rm \cite{Stembridge}} Axioms {\rm (P1)$-$(P6)} hold for any $\g$-crystal. These axioms characterize $\g$-crystals in the case of simply laced root systems. 
\end{theorem}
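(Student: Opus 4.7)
The statement splits into two assertions. For the first --- that axioms (P1)-(P6) hold in every $\g$-crystal --- the plan is to verify each axiom directly from Kashiwara's theory of crystal bases. Axioms (P1) and (P2) are essentially immediate: (P1) follows because applying $e_i$ raises the weight by $\alpha_i$, and each weight space of $V(\lambda)$ is finite-dimensional, so no monochromatic path can loop or persist indefinitely; (P2) is the standard fact that $e_i$ and $f_i$ act as partial functions on the crystal basis. For (P3), I would invoke the weight-level identity $\varepsilon(b,i) + \delta(b,i) = \inner{\wt(b)}{\alpha_i^\vee}$: applying this to both $b$ and $b' := e_i(b)$ and subtracting yields $\Delta_i\delta(b,j) + \Delta_i\varepsilon(b,j) = \inner{\alpha_i}{\alpha_j^\vee}$, which is the relevant Cartan matrix entry. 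For the harder axioms (P4)-(P6), I would reduce to rank-two subcrystals: any statement involving only two colors $i,j$ can be checked on the restriction of $B(\lambda)$ to the Levi subalgebra generated by $\alpha_i, \alpha_j$, which in the simply laced case is of type $A_1 \times A_1$ or $A_2$; the corresponding finite-dimensional modules decompose into a small list of irreducibles whose crystals can be inspected directly.

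For the second assertion --- that in simply laced types these axioms characterize $\g$-crystals --- the plan is to prove uniqueness: any connected edge-labeled graph $B$ satisfying (P1)-(P6), with a unique source $b_\lambda$ of weight $\lambda \in \Lambda^+$, must be isomorphic to $B(\lambda)$. I would proceed by induction on the depth from the source, assuming the vertices at depth less than $n$ have been identified canonically with the corresponding weight vectors of $B(\lambda)$, and then using (P3)-(P6) to extend the identification to depth $n$. The key intermediate step is a ``word property'' for inverse paths: any two sequences of $e_i$'s taking a given vertex $b$ to $b_\lambda$ should be related by the local commutation and braid identities implicit in (P5) and (P6). Once this is in hand, the structure at each depth is forced by the structure above, and the base case (a single vertex of weight $\lambda$) agrees tautologically with $B(\lambda)$, giving the isomorphism.

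The main obstacle is this word property step: showing that (P5) and (P6) generate \emph{all} coincidences among inverse paths requires a rank-three analysis. Stembridge's strategy, which I would follow, is to enumerate the finitely many local configurations that can arise in simply laced rank-three root systems ($A_3$, $A_1 \times A_2$, $A_1^3$, and $D_4$) and to check in each case that the poset-theoretic structure forced by (P1)-(P6) coincides with that of the corresponding $\g$-crystal. This is precisely where the simply laced hypothesis is essential: in doubly laced types, additional relations appear at rank three that are not implied by (P5)-(P6), which is why the characterization in the doubly laced setting required a different axiomatization (cf.\ \cite{DKK}).
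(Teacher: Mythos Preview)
The paper does not give its own proof of this theorem: it is stated with attribution to \cite{Stembridge} and used as background. So there is no in-paper argument to compare your sketch against.

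That said, a few comments on your sketch relative to Stembridge's actual proof in \cite{Stembridge}. Your outline for the first assertion (reducing (P3)--(P6) to rank-two subcrystals of type $A_1\times A_1$ and $A_2$) is essentially what Stembridge does. For the characterization, however, your description does not match his method. Stembridge's central technical step is showing that the class of graphs satisfying (P1)--(P6) is closed under \emph{tensor product}; this is a rank-two case analysis, not a rank-three one. From tensor-product closure plus uniqueness of the source vertex, he obtains the isomorphism with $B(\lambda)$ by a character comparison and induction on the number of simple roots. There is no enumeration of rank-three types in his argument, and in particular no $D_4$ analysis; note also that $D_4$ has rank four, not three, so that item in your list is simply wrong.

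Your alternative route---an inductive identification by depth via a ``word property'' for $e$-paths---is not unreasonable in spirit (indeed, the present paper proves such a word property for lower intervals in Theorem~\ref{connect-moves} as a \emph{consequence} of (P1)--(P6)), but you have not explained why rank-three information would suffice to force the identification at all depths, nor why the list you give is the right one. As written, this part of the sketch is a plausible heuristic rather than a proof plan, and the attribution of that strategy to Stembridge is inaccurate.
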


In simply laced types we have $a_{ij}\in\{0,-1\}$ for $i\ne j$, so (P3) and (P4) allow for only three possibilities:
\[(a_{ij},\,\Delta_i \delta(b,j),\,\Delta_i \varepsilon (b,j))\,=\,(0,0,0),\;(-1,-1,0),\;(-1,0,-1)\,.\]
Thus, the local structure is controlled by (P5) and (P6), cf.  the corollary below which also motivates Definition~\ref{stembridge-move-definition} later in the paper.

\begin{corollary}\label{corsl}
For any cover relations $u\stackrel{i}{\rightarrow}v$ and $u\stackrel{j}{\rightarrow} w$ with $i\ne  j$ in a simply laced $\g$-crystal, we have one of the following two cases:
\begin{enumerate}
\item[{\rm (i)}] there is $x$ which covers both $v$ and $w$ with $v\stackrel{j}{\rightarrow}x$ and $w\stackrel{i}{\rightarrow}x$;
\item[{\rm (ii)}] there are saturated chains $v \stackrel{j}{\rightarrow} v_1 \stackrel{j}{\rightarrow} v_2 \stackrel{i}{\rightarrow} x$ and $w\stackrel{i}{\rightarrow} w_1 \stackrel{i}{\rightarrow} w_2 \stackrel{j}{\rightarrow} x$, and no other such chains of length at most $3$.
\end{enumerate}
\end{corollary}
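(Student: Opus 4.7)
The plan is to apply Stembridge's local axioms at $u$, using the equivalent $f$-versions (P5$'$) and (P6$'$) (which the excerpt notes are obtained from (P5) and (P6) by replacing $e_i$ with $f_i$, and hence $\Delta,\delta$ with $\nabla,\varepsilon$ respectively). First, I would invoke axioms (P3) and (P4) at $u$ for the pair of colors $(i,j)$. Because the ambient crystal is simply laced we have $a_{ij}\in\{0,-1\}$, so these two axioms leave exactly the three possibilities for the triple $(a_{ij},\nabla_i\delta(u,j),\nabla_i\varepsilon(u,j))$ already singled out in the paragraph immediately preceding the corollary. The same analysis with the roles of $i$ and $j$ reversed forces $\nabla_j\varepsilon(u,i)\in\{0,-1\}$ as well. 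The ensuing case split is driven by whether these two quantities vanish or both equal $-1$.

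Next I would split into cases. If $\nabla_i\varepsilon(u,j)=0$, or symmetrically if $\nabla_j\varepsilon(u,i)=0$, then axiom (P5$'$) applied at $u$ yields $f_if_j(u)=f_jf_i(u)$; setting $x:=f_j(v)=f_i(w)$ produces the diamond of case (i). Otherwise $\nabla_i\varepsilon(u,j)=\nabla_j\varepsilon(u,i)=-1$, and axiom (P6$'$) at $u$ delivers a common element $x:=f_if_j^2f_i(u)=f_jf_i^2f_j(u)$. Setting $v_1:=f_j(v)$, $v_2:=f_j(v_1)$, $w_1:=f_i(w)$, and $w_2:=f_i(w_1)$ then produces the two length-$3$ saturated chains $v\stackrel{j}{\to}v_1\stackrel{j}{\to}v_2\stackrel{i}{\to}x$ and $w\stackrel{i}{\to}w_1\stackrel{i}{\to}w_2\stackrel{j}{\to}x$ of case (ii).

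Finally I would verify the uniqueness clause ``no other such chains of length at most $3$'' in case (ii). Because crystal posets are graded by weight (Remark~\ref{labint}), any saturated chain from $v$ to $x$ has length exactly $3$ with label multiset $\{i,j,j\}$, leaving only the orderings $(j,j,i),(j,i,j),(i,j,j)$. Axiom (P2) applied successively at $v,v_1,v_2$ forces the $(j,j,i)$ ordering to realize precisely the chain $v\to v_1\to v_2\to x$. Any chain beginning with an $f_i$-step from $v$ must first pass through the element $f_i(v)=f_i^2(u)$ on the $i$-string through $u$; a short case analysis using (P3) together with (P5$'$) or (P6$'$) at the relevant intermediate vertex, and the pinned values $\nabla_i\varepsilon(u,j)=\nabla_j\varepsilon(u,i)=-1$, rules out reaching $x$ via such a chain. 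The $w$-side is symmetric. I expect this last step to be the main obstacle, since it requires careful propagation of $\nabla$-values across candidate intermediate vertices, whereas the dichotomy into (i) versus (ii) follows from a single application of (P5$'$) or (P6$'$) at $u$.
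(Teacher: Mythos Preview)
Your proposal is correct and follows essentially the same route as the paper, which does not supply a standalone proof but simply records the corollary as the outcome of the paragraph preceding it: in simply laced type (P3) and (P4) restrict $(\Delta_i\delta,\Delta_i\varepsilon)$ to three values, and then (P5)/(P6) (equivalently their $f$-duals (P5$'$)/(P6$'$)) force the dichotomy. Your choice to work with the $\nabla$-quantities at $u$ rather than the $\Delta$-quantities at $v$ is just a notational preference, since $\nabla_i\varepsilon(u,j)=\Delta_i\varepsilon(v,j)$; and your final paragraph on the uniqueness clause is in fact more detailed than anything the paper supplies, the paper being content to defer that verification to Stembridge's original article.
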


Stembridge proposed a larger set of axioms which conjecturally characterize doubly laced $\g$-crystals; namely, in addition to the expressions of length $2$ and $4$ in (P5) and (P6), there are similar expressions of length $5$ and $7$. Sternberg \cite{Sternberg} showed that these axioms are indeed satisfied by doubly laced $\g$-crystals, so we have the following analogue of Corollary~\ref{corsl}; see also \cite{DKK}. 

\begin{corollary}\label{cordl} For any cover relations $u\stackrel{i}{\rightarrow}v$ and $u\stackrel{j}{\rightarrow} w$ with $i\ne  j$ in a doubly laced $\g$-crystal, there is an upper bound $x$ of $v$ and $w$ in the label $\{i,j\}$-restricted subposet, with $d:={\rm rank}(x)-{\rm rank}(u)\in\{2,4,5,7\}$. In each case, we have a complete description of all the saturated chains from $v$ and $w$ to $x$, while no similar chains of length at most $d-1$ exist (when $d\in\{2,4\}$, we have precisely the cases in Corollary~{\rm \ref{corsl}}).
\end{corollary}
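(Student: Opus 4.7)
The plan is to deduce the corollary directly from the extended local axioms that Stembridge proposed for doubly laced crystals and that Sternberg verified. First I would observe that in a doubly laced root system, the off-diagonal Cartan entries satisfy $a_{ij},a_{ji}\in\{0,-1,-2\}$, and axioms (P3) and (P4) force $\Delta_i\delta(u,j)\le 0$ and $\Delta_i\varepsilon(u,j)\le 0$ with $\Delta_i\delta(u,j)+\Delta_i\varepsilon(u,j)=a_{ij}$, so only finitely many combinatorial types are possible for the pair $(\Delta_i\delta(u,j),\Delta_j\delta(u,i))$ at the vertex $u$ (once we assume, as the hypothesis of the corollary does, that both $e_i(b):=e_i(u\stackrel{i}{\rightarrow}v)$-type statements apply, i.e.\ that both $v=f_i(u)$ and $w=f_j(u)$ exist, and read off the analogues for $f_i,f_j$).

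Next I would split into cases according to this pair. The simply laced patterns $(0,0)$ and $(-1,-1)$ reproduce verbatim Corollary~\ref{corsl}, giving $d=2$ and $d=4$ respectively, with the chain description already supplied there. The genuinely doubly laced cases are those where at least one of $a_{ij},a_{ji}$ equals $-2$. For each such pair, I would invoke the corresponding length-$5$ or length-$7$ Stembridge/Sternberg relation to produce a common upper bound $x$ of $v$ and $w$ in the $\{i,j\}$-restricted subposet, together with the two explicit saturated chains of length $d$ from $v$ and from $w$ to $x$. The two new cases $d=5$ and $d=7$ arise precisely from the two types of pairs in which the doubly laced bond participates, matching the two extra axioms; by symmetry, interchanging $i$ and $j$ exchanges the two chains but not the value of $d$.

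To handle the uniqueness clause, I would argue that no common upper bound $x'$ of $v$ and $w$ in the $\{i,j\}$-restricted subposet can be reached by saturated chains of total length less than $d$. The tool is (P2): along any such pair of chains meeting at $x'$, each color has a unique incoming edge, so the chains can be propagated back down toward $u$; the resulting local diagram then computes values of $\Delta_i\delta(u,j)$ and $\Delta_j\delta(u,i)$ that contradict the combinatorial type assumed for $u$. Equivalently, the weight count $\wt(v)-\wt(x')$ and $\wt(w)-\wt(x')$ must consist of nonnegative integer combinations of $\alpha_i$ and $\alpha_j$ agreeing after adjustment by $\alpha_i,\alpha_j$, which for each case pins down the shape and length of any such chain and rules out shorter ones.

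The main obstacle will be making the ``complete description of all the saturated chains from $v$ and $w$ to $x$'' fully precise in the $d=5$ and $d=7$ cases without simply quoting the axiom verbatim. My preferred route is to use the reformulation of the local axioms in \cite{DKK}, which is stated directly in terms of pictures of saturated chains in the $\{i,j\}$-restricted subposet; once that reformulation is in hand, the enumeration of chains is immediate by inspection of the relevant local diagram, and both the existence of $x$ and the absence of shorter paths to any common $\{i,j\}$-upper bound can be read off simultaneously. This reduces the corollary to case-by-case inspection of the explicit Stembridge/Sternberg/DKK local pictures, in complete parallel with the derivation of Corollary~\ref{corsl} from (P5) and (P6).
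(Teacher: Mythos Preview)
Your proposal is correct and aligns with the paper's treatment. The paper does not supply a proof of this corollary at all: it simply states that Sternberg \cite{Sternberg} verified Stembridge's extended axioms (with the additional length~$5$ and~$7$ relations) for doubly laced $\g$-crystals, cites the equivalent local characterization in \cite{DKK}, and presents the corollary as an immediate consequence. Your case analysis via (P3)--(P4) to enumerate the possible types of the pair, followed by invoking the corresponding local relation (using the $\nabla$/$(\mathrm{P}5')$--$(\mathrm{P}6')$ side, since the statement concerns upward cover relations), and then reading off both the chain description and the nonexistence of shorter chains from the explicit local pictures in \cite{DKK}, is exactly the derivation the paper leaves implicit. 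One minor notational point: the quantities you want at $u$ are the $\nabla$-versions (since $v=f_i(u)$, $w=f_j(u)$), not $\Delta_i\delta(u,j)$ as you wrote; but this is cosmetic and does not affect the argument.
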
 

\begin{remarks}{\rm (1) 
The results of Stembridge and Sternberg mentioned above, for simply and doubly laced $\g$-crystals, include all the corresponding finite, affine, and twisted affine cases. The only such $\g$-crystals not covered are the triply laced ones. Calculations show that these are much more complex, involving   over $40$ local moves. 

(2) As a word of caution, these results of Stembridge above do not guarantee that the upper (or lower) bound described in  his results will be the unique least upper bound or greatest lower bound.  Indeed, we construct an example of a crystal that is not a lattice in Example ~\ref{non-lattice}.   The existence of such examples is what makes possible the more extensive array of relations among crystal operators constructed e.g. in Section ~\ref{negative-section}, in spite of Stembridge's results.
}
\end{remarks}
	
	For simplicity, from now on we will refer to $\g$-crystals simply as crystals.

\subsection{The tableau model for crystals of type $A_{n-1}$}\label{ssyt} 

In this case, a dominant weight $\lambda$ can be viewed as a partition $(\lambda_1\ge\lambda_2\ge\ldots\ge\lambda_{n-1}\ge 0)$, or as the corresponding Young diagram. The vertices of $B(\lambda)$ can be indexed by all the {\em semistandard Young tableaux} (SSYT) of shape $\lambda$ with entries in $[n]:=\{1,\ldots,n\}$ (i.e., the entries in the boxes of $\lambda$ are weakly increasing in rows and strictly increasing in columns).

We will now describe the crystal operators on SSYT in terms of the so-called 
{\em signature rule}, which is just a translation of the well-known tensor product rule for arbitrary crystals \cite{HK}. 
To apply $f_i$ (or $e_i$) on $T$ in $B(\lambda)$, consider the word with letters
$i$ and $i+1$ formed by recording these letters in the columns of $T$, which are scanned from left to right and bottom to top. We replace the letter $i$ with the symbol $+$ and the letter $i+1$ with $-$. 
Then, we remove from our binary word adjacent pairs $-+$, as long as this is possible. At the end of this process, we are left with a word
\begin{equation}
    \rho_i(T) = \underbrace{++ \ldots +}_x\underbrace{-- \ldots -}_y\,,
    \label{eqn:reduced_word}
\end{equation}
called the $i$-signature of $T$.
\begin{definition} {\rm 
{\rm (1)} If $y > 0$, then $e_i(T)$ is obtained by replacing  in $T$ the letter 
        $i+1$ which corresponds to the leftmost $-$ in
        $\rho_i(T)$ with the letter $i$. 
        If $y=0$, then $e_i(T) = \Bzero$.

 {\rm (2)} If $x > 0$, then $f_i(T)$ is obtained by replacing  in $T$ the
        letter $i$ which corresponds to the rightmost $+$ in
        $\rho_i(T)$ with the letter $i+1$.
        If $x=0$, then $f_i(T) =\Bzero$. }
\label{definition:crystal_operators}
\end{definition}

\begin{example}\label{signature-example}
{\rm 
		Let $n=4$, 	 and $T = \tableau{1 & 2 & 2 & 2 & 2 & 3 \\ 3 & 3 & 4 }$, with $2$-signature $++-$. So we have $f_2(T)=\tableau{1 & 2 & 2 & 2 & 3 & 3 \\ 3 & 3 & 4 }\,$.}
\end{example}



\subsection{Finite root systems}\label{rootsys}
Let $\mathfrak{g}$ be a complex semisimple Lie algebra, and $\mathfrak{h}$ a Cartan subalgebra, whose rank is $r$. 
Let $\Phi \subset \mathfrak{h}^*$ be the corresponding irreducible \emph{root system}, 
$\mathfrak{h}^*_{\reals}\subset \mathfrak{h^*}$ the real span of the roots, and $\Phi^{+} \subset \Phi$ the set of positive roots.
Let $\Phi^{-} := \Phi \backslash \Phi^{+}$.
For $ \alpha \in \Phi$ we will say that $ \alpha > 0 $ if $ \alpha \in \Phi^{+}$,
and $ \alpha < 0 $ if $ \alpha \in \Phi^{-}$.
The sign of the root $\alpha$, denoted $\sgn(\alpha)$, is defined to be $1$ if $\alpha \in \Phi^{+}$, and $-1$ otherwise.
Let $| \alpha | = \sgn( \alpha ) \alpha $.
Let $\rho := \frac{1}{2}(\sum_{\alpha \in \Phi^{+}}\alpha)$. Let 
$\alpha_1,\ldots,\alpha_r\in \Phi^{+}$ be the corresponding \emph{simple roots}, and $s_i:=s_{\alpha_i}$ the corresponding simple reflections; the indexing set $\{1,\ldots,r\}$ for the simple roots is traditionally denoted $I$. We denote by 
$\inner{\cdot}{\cdot}$ the nondegenerate scalar product on 
$\mathfrak{h}^{*}_{\reals}$ induced by the Killing form. Given a root $\alpha$, we consider the corresponding \emph{coroot} $\alpha^{\vee} := 2\alpha/ \inner{\alpha}{\alpha}$ and reflection
$s_{\alpha}$.  

Let $W$ be the corresponding \emph{Weyl group}. The length function on $W$ is denoted by $\ell(\cdot)$. The longest element of $W$ is denoted by $w_\circ(I)$ or just $w_\circ$; 
we have $\ell(w_\circ)=|\Phi^+|$ (where $|\cdot|$ denotes cardinality), which is traditionally denoted $N$. A reduced expression $s_{i_1}\ldots s_{i_N}$ for $w_\circ$ determines a so-called {\em reflection order} $(\beta_1,\beta_2,\ldots,\beta_N)$ on the positive roots $\Phi^+$, where $\beta_j=s_{i_1}\ldots s_{i_{j-1}}(\alpha_{i_j})$. The \emph{Bruhat order} on $W$, sometimes called the strong Bruhat order, is defined by its covers $w \lessdot ws_{\alpha}$, for $\alpha \in \Phi^{+}$,
if $\ell(ws_{\alpha}) = \ell(w) + 1$; we write $w\stackrel{\alpha}{\longrightarrow}ws_\alpha$ for the corresponding edge of the Hasse diagram.  The strong Bruhat order may equivalently be defined via cover relations $w\lessdot s_\alpha w$ for $\ell (s_\alpha w) =  \ell (w) + 1$, yielding the same poset.

The \emph{weight lattice} $\Lambda$ is given by 
\begin{equation}
	\Lambda := \left\{ \lambda \in \mathfrak{h}_{\reals}^{*} \: : \:
	\inner{\lambda}{\alpha^{\vee}} \in \integers \text{ for any } \alpha \in \Phi \right\}\,.
	\label{eqn:weight_lattice}
\end{equation}
The weight lattice $\Lambda$ is generated by the \emph{fundamental weights} 
$\omega_1, \ldots \omega_r$, which form the dual basis to the basis of simple coroots, i.e., 
$\inner{\omega_i}{\alpha_j^{\vee}}= \delta_{ij}$. The set $\Lambda^{+}$ of \emph{dominant weights}
is given by 
\begin{equation}
	\Lambda^{+} := \left\{  \lambda \in \Lambda \: : \: 
	\inner{\lambda}{\alpha^{\vee}} \geq 0 \text{ for any } \alpha \in \Phi^{+}
	\right\}.
	\label{eqn:dominant_weights}
\end{equation}
A dominant weight is called {\em regular} if it is not on the walls of the dominant chamber, i.e., if $\langle\lambda,\alpha_i^\vee\rangle\ne 0$ for all simple roots $\alpha_i$; otherwise, it is called {\em non-regular}.

For $J\subseteq I=\{1,\ldots,r\}$, we let $W_J$, $\Phi_J$, and $w_\circ(J)$ be the corresponding parabolic subgroup of $W$, its root system, and its longest element, respectively.

Given $\alpha \in \Phi$ and $k \in \integers$, we denote by $s_{\alpha,k}$ the reflection in the affine hyperplane
\begin{equation}
	H_{\alpha,k}:= \left\{  \lambda \in \mathfrak{h}^{*}_{\reals} \: : \: 
	\inner{\lambda}{\alpha^{\vee}} = k \right\}
	\label{eqn:affine_hyperplane}.
\end{equation}
These reflections generate the \emph{affine Weyl group} $W_{\textrm{aff}}$ for the 
\emph{dual root system} \linebreak $\Phi^{\vee}:= \left\{ \alpha^{\vee} \: :\: \alpha \in \Phi \right\}$.
The hyperplanes $H_{\alpha,k}$ divide the real vector space $\mathfrak{h}^{*}_\reals$ into open regions, called \emph{alcoves.} The \emph{fundamental alcove} $A_{\circ}$ is given by
\begin{equation}
	A_{\circ} := \left\{ \lambda \in \mathfrak{h}_{\reals}^{*} \: : \: 
	0 < \inner{\lambda}{\alpha^{\vee}} < 1 \text{ for all } \alpha \in \Phi^{+}
	\right\}.
	\label{eqn:fundamental_alcove}
\end{equation}

\subsection{The alcove model for crystals}\label{alcmod} 

We now review the alcove model \cite{LP,LP1} for crystals. This model was formulated in \cite{LP1} in the level of generality of 
symmetrizable Kac-Moody algebras, but we restrict here to the case of finite root systems, i.e., to crystals corresponding to the semisimple Lie algebras. 

We say that two alcoves are {adjacent} if they are distinct and have a common wall. Given a pair of adjacent alcoves $A$ and $B$, we write $A \stackrel{\beta}{\longrightarrow} B$ if the
common wall is of the form $H_{\beta,k}$ and the root
$\beta \in \Phi$ points in the direction from $A$ to $B$.

\begin{definition}{\rm \cite{LP}} {\rm 
	An  \emph{alcove path} is a {sequence of alcoves} $(A_0, A_1, \ldots, A_m)$ such that
	$A_{j-1}$ and $A_j$ are adjacent, for $j=1,\ldots m.$ We say that an alcove path 
	is \emph{reduced} if it has minimal length among all alcove paths from $A_0$ to $A_m$.}
	\end{definition}
	
	Let $A_{\lambda}=A_{\circ}+\lambda$ be the translation of the fundamental alcove $A_{\circ}$ by the weight $\lambda$.
	
	\begin{definition}\label{deflc}{\rm \cite{LP}} {\rm 
		The sequence of roots $(\beta_1, \beta_2, \dots, \beta_m)$ is called a
		\emph{$\lambda$-chain} if 
		\[	
		A_0=A_{\circ} \stackrel{-\beta_1}{\longrightarrow} A_1
		\stackrel{-\beta_2}{\longrightarrow}\dots 
		\stackrel{-\beta_m}{\longrightarrow} A_m=A_{-\lambda}\]
is a reduced alcove path.}
\end{definition}

We now fix a dominant weight $\lambda$ and an alcove path $\Pi=(A_0, \dots , A_m)$ from 
$A_0 = A_{\circ}$ to $A_m = A_{-\lambda}$. Note that $\Pi$ is determined by the corresponding $\lambda$-chain $\Gamma:=(\beta_1, \dots, \beta_m)$, which consists of positive roots. 
We let $r_i:=s_{\beta_i}$, and let $\widehat{r_i}$ be the affine reflection in the hyperplane containing the common face of $A_{i-1}$ and $A_i$, for $i=1, \ldots, m$; in other words, 
$\widehat{r}_i:= s_{\beta_i,-l_i}$, where $l_i:=|\left\{ j<i \, : \, \beta_j = \beta_i \right\} |$. 

Let $F=\left\{ j_1 < j_2 < \cdots < j_s \right\}$  be a subset of $[m]$. The elements of $F$ are called \emph{folding positions} for the reason which we now explain. We ``fold'' $\Pi$ in the hyperplanes corresponding to these positions, namely $H_{\beta_{k},-l_k}$ with $k=j_s,j_{s-1},\ldots,j_1$ in this order, each time applying the corresponding affine reflection to the tail of the current sequence of alcoves, viewed as a ``folded path''; see \cite{LP1} for more details. Like $\Pi$,  the final folded path can be recorded by a sequence of roots, namely 
$\Gamma(F)=\left( \gamma_1,\gamma_2, \dots, \gamma_m \right)$, where 
	 \begin{equation}\label{defw}
	 \gamma_k:=r_{j_1}r_{j_2}\dots r_{j_p}(\beta_k)\,,
	 \end{equation}
	  with $j_p$ the 
	 largest folding position less than $k$. 
	 We define $\gamma_{\infty} := r_{j_1}r_{j_2}\dots r_{j_s}(\rho)$.
	 Upon folding, the hyperplane separating the alcoves $A_{k-1}$ and $A_k$ in $\Pi$ is mapped to 
\begin{equation}\label{deflev}
H_{|\gamma_k|,-\l{k}}=\widehat{r}_{j_1}\widehat{r}_{j_2}\dots \widehat{r}_{j_p}(H_{\beta_k,-l_k})\,,
\end{equation}
for some $\l{k}$, which is defined by this relation.

\begin{definition}\label{defadm} {\rm 
{\rm (1)}	A subset $F=\left\{ j_1 < j_2 < \cdots < j_s \right\} \subseteq [m]$ (possibly empty)
 is an \emph{admissible subset} if we have the following saturated chain in the strong Bruhat order on $W$:
\begin{equation}
	\label{eqn:admissible}
	1 \stackrel{\beta_{j_1}}{\longrightarrow} r_{j_1} \stackrel{\beta_{j_2}}{\longrightarrow} r_{j_1}r_{j_2} 
	\stackrel{\beta_{j_3}}{\longrightarrow} \cdots \stackrel{\beta_{j_s}}{\longrightarrow} r_{j_1}r_{j_2}\cdots r_{j_s}\,.
\end{equation}
We call $\Gamma(F)$ an \emph{admissible folding}.

{\rm (2)} Given $F$ as above, we let
\[\mu(F):=-\widehat{r}_{j_1}\widehat{r}_{j_2}\ldots \widehat{r}_{j_s}(-\lambda)\,,\;\;\;\;\;\kappa(F):=r_{j_1}r_{j_2}\ldots r_{j_s}\,,\]
and call them  the \emph{weight} and the {\em key} of $F$, respectively.}
\end{definition}

We let $\A(\Gamma)$ be the collection of 
admissible subsets; when $\Gamma$ is fixed, like for the rest of this section, we also use the notation $\A(\lambda)$.

We now define the crystal operators on $\A( \lambda )$. 
Given $F\subseteq [m]$ and
$\alpha\in \Phi^+$, we will use the following notation:
	\[ 
	 I_\alpha = I_{\alpha}(F):= \left\{ i \in [m] \, : \, \gamma_i = \pm \alpha \right\}\,, \qquad 
\widehat{I}_\alpha = \widehat{I}_{\alpha}(F):= I_{\alpha} \cup \{\infty\}\,, 
\]
and $l_{\alpha}^{\infty}:=\inner{\mu(F)}{\alpha^{\vee}}$.
The following graphical representation of the heights $l_i^F$ for $i\in{I}_\alpha$ and $l_{\alpha}^{\infty}$ 
is useful for defining the crystal operators.
Let 
\[\widehat{I}_{\alpha}= \left\{ i_1 < i_2 < \dots < i_n <i_{n+1}=\infty \right\}\,
	\text{ and  }
	\varepsilon_i := 
	\begin{cases}
		\,\,\,\, 1 &\text{ if } i \not \in F\\
		-1 & \text { if } i \in F
	\end{cases}.\,
	\]
Given a positive root $\alpha$, we define the continuous piecewise-linear function 
$g_{\alpha}:[0,n+\frac{1}{2}] \to \reals$ by
\begin{equation}
	\label{eqn:piecewise-linear_graph}
	g_\alpha(0)= -\frac{1}{2}, \;\;\; g'_{\alpha}(x)=
	\begin{cases}
		\sgn(\gamma_{i_k}) & \text{ if } x \in (k-1,k-\frac{1}{2}),\, k = 1, \ldots, n\\
		\varepsilon_{i_k}\sgn(\gamma_{i_k}) & 
		\text{ if } x \in (k-\frac{1}{2},k),\, k=1,\ldots,n \\
		\sgn(\inner{\gamma_{\infty}}{\alpha^{\vee}}) &
		\text{ if } x \in (n,n+\frac{1}{2}).
	\end{cases}
\end{equation}
It was proved in \cite{LP1} that
\begin{equation}
	\label{eqn:graph_height}
	\l{i_k}=g_\alpha\left(k-\frac{1}{2}\right), k=1, \dots, n, \, 
	\text{ and }\, 
	l_{\alpha}^{\infty}:=
	\inner{\mu(F)}{\alpha^{\vee}} = g_{\alpha}\left(n+\frac{1}{2}\right).
\end{equation}

We will need the following properties of admissible subsets, relative to a simple root $\alpha$, which will be used implicitly. Note first that the function $g_{\alpha}$ is determined by the sequence $(\sigma_1, \dots, \sigma_{n+1})$, where 
	$\sigma_j = (\sigma_{j,1},\sigma_{j,2}):= (\sgn(\gamma_{i_j}), \varepsilon_{i_j}\sgn (\gamma_{i_j}))$ for 
	$1\leq j\leq n$,  and $\sigma_{n+1}= \sigma_{n+1,1}:=\sgn (\langle \gamma_{\infty}, \alpha^{\vee} \rangle)$. It was proved in \cite{LP1} that we have the following restrictions:
	\begin{enumerate}
	\item[(C1)]   $\sigma_{1,1}=1$.  
	\item[(C2)] $\sigma_{j}\in\{(1,1),\,(-1,-1),\,(1,-1)\}$ for $j\le n$.
	\item[(C3)]	$\sigma_{j,2}=1 \Rightarrow \sigma_{j+1,1}=1$.
	\end{enumerate}
These restrictions imply that if $g_\alpha$ attains its maximum $M$ at $x$, then $M \in \ZZ_{\geq 0}$, $x=m+\frac{1}{2}$ for $0 \leq m \leq n$, 
	  and $\sigma_{m+1} \in \left\{ (1,-1),1 \right\}$.

We will now define the crystal operators on an admissible subset $F$. 
Fix $p$ in $\{1,\ldots,r\}$, so $\alpha_p$ is a simple root.
Let $M=M(F,p)\ge 0$ be the maximum of $g_{{\alpha}_p}$ corresponding to $F$. Assuming that $M>0$, let $m$ be the minimum index 
$i$  in $\widehat{I}_{{\alpha}_p}$ for which we have $\l{i}=M$ (cf. the remarks above). 
We have either $m\in F$ or $m=\infty$; furthermore, $m$ has a predecessor $k$ in $\widehat{I}_{{\alpha}_p}$, and it turns out that $k\not\in F$. We define
\begin{equation}
	\label{eqn:rootF} 
f_p(F):= 
	\begin{cases}
		(F \backslash \left\{ m \right\}) \cup \{ k \} & \text{ if $M>0 $ } \\
				\Bzero & \text{ otherwise }.
	\end{cases}
\end{equation}
Now we define $e_p$. Again let $M$ be the maximum of $g_{{\alpha}_p}$. Assuming that $M>\inner{\mu(F)}{{\alpha}_p^{\vee}}$, let $k$ be the maximum index 
$i$  in $I_{{\alpha}_p}$ for which we have $\l{i}=M$, which always exists. Let $m$ be the successor of $k$ in $\widehat{I}_{{\alpha}_p}$. It turns out that $k\in F$, and either $m\not\in F$ or $m=\infty$. 
Define 
\begin{equation}
	\label{eqn:rootE}
e_p(F):= 
	\begin{cases}
		(F \backslash \left\{ k \right\}) \cup \{ m \} & \text{ if }
		M>\inner{\mu(F)}{{\alpha}_p^{\vee}}  \\
				\Bzero & \text{ otherwise. }
	\end{cases}
\end{equation}
In the above definitions, we use the
convention that $F\backslash \left\{ \infty \right\}= F \cup \left\{ \infty \right\} = F$. 

It was shown in \cite{LP1} that, whenever $f_p(F)$ and $e_p(F)$ are not equal to $\Bzero$, they are also admissible subsets. The main result of \cite{LP1}, cf. also \cite{lenccg}, is that the combinatorially defined crystal $\A(\lambda)$ is isomorphic to the crystal corresponding to the highest weight representation $V(\lambda)$; we stress that this is true for {\em any} $\lambda$-chain. We will also need the following expressions of the rise and the depth (see Section~\ref{sec1}) in terms of the alcove model, based on the above notation:
\begin{equation}\label{risedepth}
\varepsilon(F,p)=M(F,p)\,,\;\;\;\;\delta(F,p)=\inner{\mu(F)}{{\alpha}_p^{\vee}}-M(F,p)\,.
\end{equation}

\subsection{The key of a crystal}\label{sectkey}
Let us now discuss the (right) key,  viewed as a map from the crystal to the Weyl group $W$.  More specifically, it is a poset map to the left weak Bruhat order. Recall that the {\em left weak Bruhat 
order}  is defined by its cover
relations $w\lessdot s_i w$ which holds whenever $\ell(s_iw)>\ell(w)$. In what follows, the context will always specify whether we refer to the strong or the left weak Bruhat order.

The (right) key was first defined in type $A$ by Lascoux and Sch\"utzenberger \cite{LS} as a special SSYT (with the entries in each column contained in the previous one) associated to an arbitrary SSYT of the same shape $\lambda$. This can be viewed in a natural way as a permutation in $S_n^\lambda$, that is, the subset of $S_n$ consisting of lowest coset representatives modulo the stabilizer of $\lambda$. The construction is in terms of the corresponding {\em plactic monoid}. See also \cite{Reiner-Shimozono}.

The key was later defined in arbitrary symmetrizable Kac-Moody type in terms of the model for crystals based on {\em Lakshmibai-Seshadri (LS) paths} \cite{Li}. Here it is known as the {\em initial direction} of a path. In the alcove model (in the finite case), the key appears in Definition~\ref{defadm}~(2); we will use the notation there, namely $\kappa(b)$ is the key of a crystal vertex $b$. It is not hard to see that the chain \eqref{eqn:admissible} consists of lowest coset representatives modulo the stabilizer $W_\lambda$ of $\lambda$, so the key map takes values in this set, denoted $W^\lambda$.

The importance of the key lies in the fact that it describes the Demazure subcrystal $B_w(\lambda)$ as consisting of those vertices $b$ in $B(\lambda)$ with $\kappa(b)\le w$ in strong Bruhat order. In other words, the key of $b$ in $B(\lambda)$ can be interpreted as giving the smallest Demazure subcrystal $B_w(\lambda)$ of $B(\lambda)$ containing $b$. We immediately get a combinatorial formula for Demazure characters generalizing \eqref{charf}. 


 The following two properties of the key map, which hold in symmetrizable Kac-Moody generality, are well-known, see \cite[Lemma~5.3]{Li} and the discussion below. First, if $f_p(F)\ne\Bzero$, then
\begin{equation}\label{changekey}\kappa(f_p(F))=\casetwo{\kappa(F)}{e_p(F)\ne\Bzero}{s_p\kappa(F)\,\;\mbox{or}\,\;\kappa(F)}{e_p(F)=\Bzero}\end{equation}
Secondly, if $e_p(F)=\Bzero$, then 
\begin{equation}\label{firstkey}s_p \kappa(F)>\kappa(F)\,.\end{equation}

\begin{remark}\label{key-is-poset-map}{\rm 
Properties \eqref{changekey} and \eqref{firstkey} immediately imply that the key map is a projection from the crystal poset to the left  weak Bruhat order on $W$, and in particular is a poset map $-$ a property we will make extensive use of later.}
\end{remark}

 It is not immediately clear that, without a model for the crystal, one can distinguish between the two cases in \eqref{changekey} corresponding to a vertex at the beginning of a $p$-string, i.e., that a recursive construction of the key map based only on the crystal structure exists. In Section~\ref{sec:keyalg},  we will develop and justify such a construction that does not refer to the choice of a model, by expressing the key exclusively in terms of the crystal structure (i.e. the poset equipped with an edge coloring for its cover relations).
 We should note that there are easy criteria in terms of various models for deciding which case we are in, when we wish to apply \eqref{changekey}; for instance, in the alcove model, $f_p$ changes the key if and only if $m=\infty$ (expressed in terms of  the notation above). 

Some further  references to the literature on the key map are needed at this point. In \cite{LP1}, the LS path model (as a crystal isomorphism) is bijected to the alcove model corresponding to a particular $\lambda$-chain, called the {\em lex $\lambda$-chain}; this bijection maps the initial direction of a path to the key. In \cite{lenccg} the second author showed that the alcove model is independent of the choice of $\lambda$-chain, and exhibited crystal isomorphisms between any two (combinatorial) crystals $\A(\Gamma)$; the second author  also showed that these isomorphisms are compatible with the key. In type $A$, in \cite{LL},  an explicit crystal isomorphism is constructed, called the {\em filling map}, between the alcove model for a particular $\lambda$-chain (not the lex one) and the model based on SSYT. The filling map leads to an easy construction of the key of a SSYT, see \cite{LL}. Willis \cite{Wi} showed that the filling map is compatible with the keys (where on the SSYT side we use the right key of Lascoux-Sch\"utzenberger, see Section~\ref{ssyt}). In fact, an efficient construction of the right key of a SSYT, called the {\em scanning method}, was given earlier by Willis (see \cite{Wi}), and shown in \cite{Wi} to be closely related to the method mentioned above, related to the filling map.  

Now we turn to the dual notion of left key.  Lascoux and Sch\"utzenberger also defined the {\em left key} of a SSYT. A vast generalization of it, in the symmetrizable Kac-Moody setup, is the {\em final direction} of an LS path \cite{Li}. In a similar way to the initial direction, the final one detects membership in an opposite Demazure crystal (see Section~\ref{sec1}). We now recall from \cite{lenccg} the construction of the left key for the alcove model (in the finite case), that is, the image of the final direction of an LS path under the bijection between LS paths and the alcove model mentioned in the previous paragraph. We always refer to the left key by the full name (to distinguish it from the key $\kappa(\,\cdot\,)$), and we denote it by $\overline{\kappa}(\,\cdot\,)$.  

We now use the finite case setup in Section \ref{alcmod}. We fix a dominant weight $\lambda$, an index set $I:=\{\overline{1}<\ldots<\overline{q}<1<\ldots<m\}$, and a corresponding $\lambda$-chain $\Gamma:=(\beta_{\overline{1}},\ldots,\beta_{\overline{q}},\beta_1,\ldots,\beta_m)$ such that $l_i=0$ if and only if $i\in \overline{I}:=\{\overline{1}<\ldots<\overline{q}\}$. In other words, the second occurrence of a root can never be before the first occurence of another root. We recall the notation $r_i:=s_{\beta_i}$ for $i\in I$. 

\begin{definition}\label{defkeys} {\rm Let $F$ be an admissible subset (with respect to the special $\lambda$-chain $\Gamma$ above). Let $F\cap \overline{I}=\{\overline{\jmath}_1<\ldots<\overline{\jmath}_a\}$. The {left key} $\overline{\kappa}(F)$ of $F$ is the Weyl group element defined by
\[\overline{\kappa}(F):=r_{\overline{\jmath}_1}\ldots r_{\overline{\jmath}_a}\,.\]}
\end{definition}

We clearly have $\overline{\kappa}(F)\le\kappa(F)$ (in strong Bruhat order) and, like $\kappa(F)$, the left key $\overline{\kappa}(F)$ also belongs to $W^\lambda$. By \cite[Corollary~6.2]{lenccg}, the Lusztig involution (see Remark~\ref{labint}~(2)) relates the left and the right key as follows; here $\floor{w}$ denotes the lowest representative of the coset $wW_\lambda$, and $w_\circ(\lambda)$ the longest element of $W_\lambda$.

\begin{theorem} {\rm \cite{lenccg}}\label{relkeys}
For any admissible subset $F$, we have
\begin{equation*} \overline{\kappa}(S(F))=\floor{w_\circ \kappa(F)}=w_\circ\kappa(F) w_\circ(\lambda)\,,\;\;\;\;\kappa(S(F))=\floor{w_\circ \overline{\kappa}(F)}=w_\circ \overline{\kappa}(F)w_\circ(\lambda)\,.\end{equation*}
\end{theorem}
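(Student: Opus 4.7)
The plan is to reduce the theorem to a statement relating the Lusztig involution $S$ with Demazure and opposite Demazure subcrystals of $B(\lambda)$, and then prove the reduced statement by invoking the defining properties of $\kappa$ and $\overline{\kappa}$ as detectors of membership in these subcrystals.

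First I would verify the purely Weyl-group identity $\floor{w_\circ v} = w_\circ v\, w_\circ(\lambda)$ for any $v \in W^\lambda$. Writing $W_\lambda = W_J$ and letting $\sigma$ be the permutation of $J$ induced by conjugation by $w_\circ(\lambda)$, for each $i \in J$ one has $vw_\circ(\lambda)(\alpha_i) = -v(\alpha_{\sigma(i)}) < 0$ because $v \in W^\lambda$, so $w_\circ v\, w_\circ(\lambda)(\alpha_i) > 0$ since $w_\circ$ reverses the sign of every root. Hence $w_\circ v\, w_\circ(\lambda)\in W^\lambda$, and since it lies in the coset $w_\circ v\, W_\lambda$ it equals $\floor{w_\circ v}$. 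This disposes of the second equality in each line, and also shows that $v\mapsto \floor{w_\circ v}$ is an involution on $W^\lambda$; combined with $S^2=\mathrm{id}$, this makes the two substantive equalities $\overline{\kappa}(S(F))=\floor{w_\circ \kappa(F)}$ and $\kappa(S(F))=\floor{w_\circ \overline{\kappa}(F)}$ equivalent, so it suffices to prove one of them.

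I would attack the second via the Demazure characterization of both keys: $b$ lies in $B_w(\lambda)$ iff $\kappa(b)\le w$ in strong Bruhat order, and (through the identification of $\overline{\kappa}$ with the final direction of an LS path, cf.\ Section~\ref{sectkey}) $b$ lies in the opposite Demazure subcrystal associated to $w$ iff $\overline{\kappa}(b)\ge w$. Since $S(f_i(b)) = e_{i^*}(S(b))$, the involution $S$ carries the opposite Demazure subcrystal generated by the $f_i$'s from the extremal vertex of weight $w\lambda$ onto the Demazure subcrystal generated by the $e_{i^*}$'s from the extremal vertex of weight $-w_\circ(w\lambda)$; a careful identification of extremal vertices then shows this image is $B_{\floor{w_\circ w}}(\lambda)$. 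Applying the two key characterizations to this equality of subcrystals yields $\kappa(S(b))=\floor{w_\circ \overline{\kappa}(b)}$ for every $b$.

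The main obstacle will be the extremal-vertex bookkeeping in the Demazure step: pinning down the precise element $u\in W^\lambda$ for which $S$ maps the opposite Demazure subcrystal associated to $w$ onto $B_u(\lambda)$, including the twist by $w_\circ(\lambda)$ that is invisible when $\lambda$ is regular but essential otherwise. A concrete alternative that bypasses this would be to work directly in the alcove model with a $\lambda$-chain $\Gamma$ carrying a $w_\circ$-symmetry, under which $S$ is realized as an explicit involution on admissible subsets; both keys could then be read off from the folding positions using Definition~\ref{defadm}(2) and Definition~\ref{defkeys}. This avoids the Demazure reasoning but requires constructing and analyzing such a symmetric $\Gamma$, which is itself nontrivial.
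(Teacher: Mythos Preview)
The paper does not contain a proof of this theorem: it is quoted verbatim from \cite{lenccg} (Corollary~6.2 there), so there is nothing in the present paper to compare your proposal against.

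That said, your outline is sound. The Weyl-group identity $\floor{w_\circ v}=w_\circ v\,w_\circ(\lambda)$ for $v\in W^\lambda$ and the reduction of the two substantive equalities to one via $S^2=\mathrm{id}$ are both correct. The Demazure-crystal route is viable, but note that to pass from ``$S$ maps the opposite Demazure for $w$ to the Demazure for $\floor{w_\circ w}$'' to an \emph{equality} of keys you also need that $v\mapsto\floor{w_\circ v}$ is an order-reversing involution of $W^\lambda$ for the strong Bruhat order, so that the equivalences $\kappa(S(b))\le u\Leftrightarrow\floor{w_\circ\overline{\kappa}(b)}\le u$ for all $u\in W^\lambda$ force the desired identity; you should state this explicitly rather than leave it implicit in the ``bookkeeping''.

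Your proposed alternative is in fact essentially the approach actually taken in \cite{lenccg}: there the Lusztig involution is realized concretely in the alcove model as an explicit map on admissible subsets (built from a $\lambda$-chain with a suitable symmetry), and both key identities are then read off directly from Definition~\ref{defadm}(2) and Definition~\ref{defkeys}. So your ``fallback'' is the published argument, while your primary Demazure strategy is a genuinely different, more conceptual route that trades the combinatorics of the alcove model for the representation-theoretic characterizations of the two keys.
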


  \subsection{Posets and poset topology}\label{poset-topology-bg}
  
  A partially ordered set (poset) is generated by {\it cover relations} or {\it covers}, denoted $u\lessdot v$ or $u\prec v$, namely ordered pairs $(u,v)$  satisfying $u\le v$ where $u\le z \le v$ implies $z=u$ or $z=v$.    A {\it saturated chain} from $u$ to $v$ is a series of cover relations $u = u_0 \lessdot u_1 \lessdot \cdots \lessdot u_k = v$.     A finite poset is {\it graded} if for each $u\le v$, all saturated chains from $u$ to $v$ have the same number of cover relations, with this number called the {\it rank} of the interval.  An {\it open interval}, denoted $(u,v)$, in a poset $P$ is the subposet comprised of elements $z$ satisfying $u<z<v$, while the {\it closed interval} $[u,v]$ is the subposet of elements $z$ satisfying $u\le z \le v$.   When a poset has a unique minimal element, we denote this by $\hat{0}$, while the unique maximal element, when it exists, is denoted $\hat{1}$.
  
 Recall that the {\it M\"obius function}  $\mu_P(x,y)$ of a partially ordered set (poset)  $P$ is defined recursively as follows: $\mu_P(x,x)=1$ for $x\in P$ and $\mu_P(x,y) = -\sum_{x\le z < y} \mu_P(x,z)$ for each $x<y$.  Sometimes we simply write $\mu (x,y)$ suppressing the $P$ when it is clear from context which poset is intended.    The {\it order complex} $\Delta (P)$ of a poset $P$ is the abstract simplicial complex whose $i$-dimensional faces are the chains $x_0 < x_1 < \cdots < x_i$ of comparable elements in $P$.  Denote by $|\Delta (P)|$ a  geometric realization of $ \Delta (P)$.  
 Let $\Delta_P (x,y)$, again with the $P$ sometimes suppressed, denote the order complex of the subposet $(x,y)$ comprising the open interval from $x$ to $y$.
 
  One reason for interest in poset order complexes is the interpretation of the M\"obius function given by $\mu_P(x,y) = \tilde{\chi }(\Delta_P(x,y))$ (cf. \cite{Rota}).  Thus, proving that $\Delta_P(x,y)$ is homotopy equivalent to a ball or a sphere will imply that $\mu_P(x,y)$ must equal either $0$ (in the case of a ball) or $\pm 1$ in the case of a sphere. The {\it face poset} $F(\Delta )$ of a simplicial complex $\Delta $ is the partial order on the faces of $\Delta $ by inclusion of the associated sets of vertices comprising the faces.     See e.g. \cite{St} for further background regarding posets and their M\"obius functions.
 
 
 A {\it poset map} is a map $f:P\rightarrow Q$ from a poset $P$ to a poset $Q$ 
  such that $x\le y$ in $P$ implies $f(x) \le f(y)$ in $Q$.   In this paper, the main poset map of interest will be the key, regarded as a poset map from a crystal to the weak Bruhat order.
  {The next result, due to Quillen, 
  appears in \cite{Quillen}.
 \begin{theorem}[Quillen Fiber Lemma]\label{Quillen-fiber-lemma}
 For $f:P\rightarrow Q$ a poset map such that 
 $f_{\le x}^{-1} = \{ z\in P \,:\, f(z) \le x \} $ is contractible for each $x\in Q$, then $\Delta (P) $ is homotopy equivalent to $\Delta (Q)$. 
  \end{theorem}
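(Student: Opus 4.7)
The plan is to factor the simplicial map $\Delta(f): \Delta(P) \to \Delta(Q)$ through the (poset) mapping cylinder of $f$ and show each factor is a homotopy equivalence; the contractible-fiber hypothesis will enter only on the $P$-side, via a contractible carrier argument.

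First, I would introduce the poset mapping cylinder $M(f) := P \sqcup Q$, ordered by keeping the existing orders on $P$ and on $Q$ and declaring $p \le q$ (for $p \in P$, $q \in Q$) precisely when $f(p) \le q$ in $Q$. The natural inclusions $i_P: P \hookrightarrow M(f)$ and $i_Q: Q \hookrightarrow M(f)$ are poset maps, as is the projection $\pi: M(f) \to Q$ sending $p \mapsto f(p)$ and fixing $Q$ pointwise. Since $\pi \circ i_Q = \mathrm{id}_Q$ and $i_Q \circ \pi \ge \mathrm{id}_{M(f)}$ pointwise (note $p \le f(p)$ holds in $M(f)$ by construction), the standard Order Homotopy Theorem — pointwise-comparable poset maps induce homotopic simplicial maps on order complexes — yields that $\Delta(i_Q)$ is a homotopy equivalence with inverse $\Delta(\pi)$. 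Since $\Delta(f) = \Delta(\pi) \circ \Delta(i_P)$, it now suffices to show that $\Delta(i_P)$ is also a homotopy equivalence.

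For this, I would invoke the Contractible Carrier Lemma from simplicial topology to produce a homotopy inverse $r: \Delta(M(f)) \to \Delta(P)$. Define the carrier $C(\sigma) := \Delta(f^{-1}(Q_{\le \pi(\max \sigma)}))$ for each simplex $\sigma$ of $\Delta(M(f))$. The theorem's hypothesis ensures each $C(\sigma)$ is contractible, while $\sigma \subseteq \tau$ implies $\pi(\max \sigma) \le \pi(\max \tau)$, hence $C(\sigma) \subseteq C(\tau)$; so $C$ is a contractible carrier. The Carrier Lemma produces a simplicial $r$ carried by $C$. For $\sigma \subseteq P$ one has $\sigma \in C(\sigma)$ (since $f$ is a poset map), so both $r|_{\Delta(P)}$ and $\mathrm{id}_{\Delta(P)}$ are carried by $C$, giving $r \circ \Delta(i_P) \simeq \mathrm{id}_{\Delta(P)}$ at once.

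The main obstacle is the opposite composition $\Delta(i_P) \circ r \simeq \mathrm{id}_{\Delta(M(f))}$, since the carrier $C$ alone need not dominate $\sigma$ when $\max \sigma \in P$. I would overcome this by enlarging the carrier to $D(\sigma) := \Delta(M(f)_{\le \max \sigma}) \cup C(\sigma)$, which contains both $\sigma$ (in the cone $\Delta(M(f)_{\le \max \sigma})$) and $r(\sigma)$ (in $C(\sigma)$). Checking that $D(\sigma)$ is contractible is the delicate point: it is the union of two contractibles, so it suffices to verify the intersection is contractible and non-empty. A case analysis on whether $\max \sigma$ lies in $P$ or $Q$ shows that the intersection is either all of $C(\sigma)$ or the cone $\Delta(P_{\le \max \sigma})$, both contractible. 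Monotonicity of $D$ in $\sigma$ is immediate, so the Carrier Lemma yields $\Delta(i_P) \circ r \simeq \mathrm{id}_{\Delta(M(f))}$. Composing the two halves gives $\Delta(P) \simeq \Delta(Q)$ via $\Delta(f)$, completing the proof.
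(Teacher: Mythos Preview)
The paper does not prove the Quillen Fiber Lemma; it merely quotes it as a background result of Quillen, with a citation to \cite{Quillen}. There is therefore no proof in the paper to compare your attempt against.

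That said, your argument is a correct and standard simplicial-topology proof of the lemma, via the poset mapping cylinder together with the Contractible Carrier Lemma. One small simplification is available in your treatment of $D(\sigma)$: the ``union of two contractibles with contractible intersection'' step is never actually needed, because in each case one of the two pieces already contains the other. If $m:=\max\sigma\in Q$, then $f^{-1}(Q_{\le m})\subseteq M(f)_{\le m}$, so $D(\sigma)=\Delta(M(f)_{\le m})$, a cone. If $m\in P$, then $M(f)_{\le m}=P_{\le m}\subseteq f^{-1}(Q_{\le f(m)})$, so $D(\sigma)=C(\sigma)$, contractible by hypothesis. Either way $D(\sigma)$ is contractible outright, and the carrier argument goes through as you describe.
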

  A sufficient condition for contractibility is that $f^{-1}(x)$ has a unique maximal element for each $x\in Q$.  One point requiring case is that we are typically studying the topology of an open interval in a  poset, which means there is not a maximal  element or a minimal element in one or both of the posets $P$ and $Q$ between which we construct a map.    }
 
{
 \begin{remark}{\rm 
 The Quillen fiber lemma also has a dualized version with $f_{\le x}^{-1}$ replaced by $f_{\ge x}^{-1}$ throughout, reflecting the fact that the order complex of a poset is the same abstract simplicial complex as the order complex of the dual poset.  }
 \end{remark}
 }
 
 \section{A new recursive algorithm to calculate the key}\label{sec:keyalg}


In this section, we develop a recursive approach to calculating the (right) key map directly from the crystal graph, which works in the level of generality of  symmetrizable Kac-Moody algebras. 
%
%
%
This  algorithm proceeds from lower to higher ranks in the poset, assuming the key is known for all lower ranks and then showing how to deduce it for the next rank.  
This  algorithm  and the proof of its validity will rely on  the following properties of the key map, which we treat here as axioms:  

\begin{enumerate}
\item
$ \kappa(\hat{0}) = 1$;
\item
for each $u$ that covers $\hat{0}$, we have $\kappa (u) = s_i$ where $i$ is the color on the cover relation from $\hat{0}$ to $u$;
\item property \eqref{firstkey} from Section~\ref{sectkey}, {namely that $e_j(u)=\Bzero$ implies $s_j\kappa (u)> \kappa (u)$;
\item property \eqref{changekey} from Section~\ref{sectkey}, {which together with (3) implies that 
$\kappa$ is a poset map.}
}
\end{enumerate}

Axiom (2) is immediate, either based on a combinatorial model (like the alcove model) or the representation-theoretic interpretation of the key in terms of Demazure subcrystals (see Section~\ref{sectkey}).

 Other facts that follow are listed below. 

\begin{lemma}
If both $u$ and $u'$ are covered by $v$ with $\kappa (u) \ne \kappa (u')$, then $\kappa (v) = \kappa (u)\vee \kappa (u')$ where this join is taken  in the weak order.  In particular, if the length of $\kappa (u)$ is less than the length of $\kappa (u')$ then $\kappa (v)=\kappa (u')$, and on the other hand if $\kappa (u)$ has the same length as $\kappa (u')$ then $\kappa (v)$ must have length one more than each of these.
\end{lemma}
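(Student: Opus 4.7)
The plan is to translate the hypothesis into the language of crystal operators and then perform a short case analysis using axioms (3) and (4) above. Writing $v=f_i(u)=f_j(u')$ for the edge colors $i$ and $j$ on the two covers, I would first observe that $i\ne j$: if $i=j$, then axiom (P2) from Definition~\ref{defcr} forces $u=e_i(v)=u'$, contradicting $\kappa(u)\ne\kappa(u')$.

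Next, I would apply axiom (4), i.e., equation \eqref{changekey}, to each of the two covers, obtaining $\kappa(v)\in\{\kappa(u),\,s_i\kappa(u)\}$ and $\kappa(v)\in\{\kappa(u'),\,s_j\kappa(u')\}$. The hypothesis $\kappa(u)\ne\kappa(u')$ rules out the joint possibility $\kappa(v)=\kappa(u)=\kappa(u')$, leaving three cases to examine:
(A)~$\kappa(v)=\kappa(u)=s_j\kappa(u')$;~~~
(B)~$\kappa(v)=\kappa(u')=s_i\kappa(u)$;~~~
(C)~$\kappa(v)=s_i\kappa(u)=s_j\kappa(u')$.

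In case (A), since $\kappa(v)\ne\kappa(u')$, axiom (4) forces $e_j(u')=\Bzero$, and axiom (3) then yields $s_j\kappa(u')>\kappa(u')$. Hence $\kappa(u')\lessdot\kappa(u)=\kappa(v)$ in the left weak order, so $\kappa(v)=\kappa(u)\vee\kappa(u')$ and $\ell(\kappa(u))=\ell(\kappa(u'))+1$. Case (B) is symmetric. In case (C), the same reasoning applied to each cover separately gives cover relations $\kappa(u)\lessdot\kappa(v)$ and $\kappa(u')\lessdot\kappa(v)$ in the left weak order, and in particular $\ell(\kappa(u))=\ell(\kappa(u'))=\ell(\kappa(v))-1$. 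To identify $\kappa(v)$ as the join, note that any common upper bound $w$ of $\kappa(u)$ and $\kappa(u')$ with $w\le\kappa(v)$ must lie in $\{\kappa(u),\kappa(v)\}$ and in $\{\kappa(u'),\kappa(v)\}$ by the covering relations; since $\kappa(u)\ne\kappa(u')$, this forces $w=\kappa(v)$.

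The ``in particular'' statements follow immediately from the case breakdown: if $\ell(\kappa(u))<\ell(\kappa(u'))$, only case~(B) can occur, giving $\kappa(v)=\kappa(u')$; and if $\ell(\kappa(u))=\ell(\kappa(u'))$, only case~(C) can occur, giving $\ell(\kappa(v))=\ell(\kappa(u))+1=\ell(\kappa(u'))+1$. I do not anticipate a significant obstacle; the argument is bookkeeping once axioms (3) and (4) are in hand. The mildest subtlety is the join identification in case~(C), which uses only gradedness of the left weak Bruhat order together with the distinctness of $\kappa(u)$ and $\kappa(u')$.
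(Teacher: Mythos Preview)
Your proof is correct and is essentially a detailed unpacking of the paper's one-line argument (``This follows from $\kappa$ being a poset map''), which implicitly relies on the same ingredients you make explicit: the poset-map property together with \eqref{changekey} and \eqref{firstkey}. One small point worth tightening in case~(C): your argument shows that no common upper bound \emph{strictly below} $\kappa(v)$ exists, which establishes that $\kappa(v)$ is a \emph{minimal} common upper bound, not a priori the join; to finish, invoke the standard fact that the weak order is a meet-semilattice in which any pair with a common upper bound has a join, so the join exists, lies weakly below $\kappa(v)$, and hence equals $\kappa(v)$ by your observation.
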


\begin{proof}
  This follows from $\kappa $ being a poset map. 
\end{proof}

\begin{lemma}
If both $u$ and $u'$ are covered by $v$ with $\kappa (u) = \kappa (u')$ then $\kappa (v) = \kappa (u) = \kappa (u')$.
\end{lemma}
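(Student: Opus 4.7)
My plan is to deduce this directly from property \eqref{changekey} combined with axiom (P2), with essentially no need for further structural input beyond the hypothesis that both covers land at the same $v$.

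First, I would dispose of the trivial case $u=u'$, after which I may assume $u\ne u'$. Write $v=f_i(u)=f_j(u')$ for the (unique) colors $i,j$ realizing the two cover relations. Axiom (P2) says that $v$ has at most one incoming edge of any given color, so $u\ne u'$ forces $i\ne j$.

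Next I would apply property \eqref{changekey} from the two sides. Viewing $v=f_i(u)$, we get $\kappa(v)\in\{\kappa(u),\,s_i\kappa(u)\}$; viewing $v=f_j(u')$ and using the hypothesis $\kappa(u)=\kappa(u')$, we get $\kappa(v)\in\{\kappa(u),\,s_j\kappa(u)\}$. Suppose for contradiction that $\kappa(v)\ne\kappa(u)$. Then $\kappa(v)=s_i\kappa(u)=s_j\kappa(u)$, and right-multiplying by $\kappa(u)^{-1}$ yields $s_i=s_j$, contradicting $i\ne j$. Hence $\kappa(v)=\kappa(u)=\kappa(u')$.

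I do not expect any real obstacle here: the statement is a short deduction from the two possibilities allowed by \eqref{changekey} together with the uniqueness of incoming edges of a given color. The only subtlety worth flagging is that one must rule out the mixed scenario where $\kappa(v)$ equals $\kappa(u)$ on one side but $s_j\kappa(u')$ on the other; but this is automatic once one observes that $\kappa(v)$ is a single Weyl group element, so the two descriptions must agree, and then the argument above applies.
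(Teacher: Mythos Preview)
Your proof is correct and follows essentially the same route as the paper's: assume $\kappa(v)\ne\kappa(u)$, use \eqref{changekey} on both cover relations to force $s_i\kappa(u)=s_j\kappa(u)$, and derive a contradiction from $i\ne j$. You are slightly more explicit than the paper in invoking axiom (P2) to justify $i\ne j$, which the paper leaves implicit.
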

\begin{proof} 
If $\kappa (v) > \kappa (u) = \kappa (u')$ then $\kappa (v) = s_i \kappa (u) = s_j \kappa (u') = s_j \kappa (u)$ where the cover relation from $u$ to $v$ is colored $i$ and the cover relation from $u'$ to $v$ is colored $j$, but this is a contradiction since $s_i \kappa (u)$ does not equal $s_j \kappa (u)$ for $i\ne j$.
\end{proof}

\begin{lemma}\label{key-for-unique-cover}
Suppose $u$ is covered by $v$ and there are no other elements $u'$ also covered by $v$.  Let 
$j$ be the color of the cover relation from $u$ to $v$.  
  Then $\kappa(v)=\kappa(u)$ if $e_j(u)\ne\Bzero$, while $\kappa(v)=s_j\kappa(u)$ otherwise.   
\end{lemma}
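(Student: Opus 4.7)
The argument divides into two cases based on whether $e_j(u)=\Bzero$ or not. When $e_j(u)\ne\Bzero$, the conclusion $\kappa(v)=\kappa(u)$ is immediate from \eqref{changekey} in axiom (4), since $v=f_j(u)$. The substantive case is $e_j(u)=\Bzero$: here \eqref{changekey} pins $\kappa(v)$ down to one of two possibilities, $\kappa(u)$ or $s_j\kappa(u)$, and the hypothesis that $u$ is the only element covered by $v$ should rule out the former option.

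My plan is to obtain a contradiction by assuming $\kappa(v)=\kappa(u)=:w$ and deriving $w=e$. The key translation of the uniqueness hypothesis is that $e_i(v)=\Bzero$ for every color $i\ne j$: indeed, if $e_i(v)\ne\Bzero$ then $e_i(v)\lessdot v$ is a cover of color $i$, and this cannot coincide with the cover $u\lessdot v$ of color $j$ by weight considerations (the label on a cover is forced by the weight difference). Invoking axiom (3) at $v$ for each $i\ne j$ yields $s_iw>w$ in left weak Bruhat, and invoking axiom (3) at $u$ with color $j$ yields $s_jw>w$. Thus $w$ has no left descents in $W$; since any nonidentity Coxeter group element admits at least one left descent (the first letter of any reduced expression), we must have $w=e$.

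The remaining step is to rule out $w=e$. If $u=\hat{0}$, axiom (2) gives $\kappa(v)=s_j\ne e$, contradicting $\kappa(v)=w=e$. Otherwise $\hat{0}<u$ in the graded crystal poset, so I can pick $u_1$ with $\hat{0}\lessdot u_1\le u$; axiom (2) gives $\kappa(u_1)=s_{i_1}$ for the color $i_1$ of this cover, and the poset-map property in axiom (4) forces $\kappa(u)\ge s_{i_1}>e$ in left weak Bruhat, contradicting $\kappa(u)=w=e$.

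The only real obstacle is identifying the right combination of axioms that makes the uniqueness hypothesis bite; this is absorbed entirely by axiom (3) applied at $v$, which translates the absence of extra predecessors into the absence of left descents of $\kappa(v)$ away from color $j$. Everything else is a routine unpacking of axioms (1)--(4) together with the elementary fact that only the identity of a Coxeter group has no left descents.
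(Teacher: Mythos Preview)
Your proof is correct and follows essentially the same route as the paper's: both handle $e_j(u)\ne\Bzero$ directly via \eqref{changekey}, and in the case $e_j(u)=\Bzero$ both assume $\kappa(v)=\kappa(u)$ for contradiction, use the uniqueness hypothesis together with \eqref{firstkey} to force $s_i\kappa(v)>\kappa(v)$ for every $i$, and conclude $\kappa(v)=1$. The only cosmetic difference is that the paper finishes by noting that $\kappa(v)=1$ forces $v=\hat{0}$ (impossible since $v$ covers $u$), whereas you spell out the same contradiction via axiom~(2) and the poset-map property; these are the same argument.
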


\begin{proof}
 Since $u$ is the only element covered by $v$, we have $s_i\kappa(v)>k(v)$ for all $i\ne j$, by \eqref{firstkey}. 
If $e_j(u)\ne \Bzero$, then $\kappa(v)=\kappa(u)$ by \eqref{changekey}, so consider the case when $e_j(u)=\Bzero$. Assume for contradiction 
that $\kappa(v)=\kappa(u)$ (recall that we want to prove the opposite in this case, namely $\kappa(v)=s_j\kappa(u)$). Using \eqref{firstkey} again and the above assumptions, specifically using the assumption that $\kappa(v) = \kappa (u)$, we deduce $s_j \kappa(v)>\kappa(v)$. We conclude that $\kappa(v)=1$, so $v=\hat{0}$ (by the above axioms), which is a contradiction.
\end{proof}

Putting together these lemmas yields the desired algorithm:

\begin{theorem}\label{first-key-algorithm}
The key is fully determined by the crystal via the following rules:
\begin{enumerate}
\item[{\rm (1)}]
$\kappa(\hat{0})=1$;
\item[{\rm (2)}]
$\kappa(a)=s_i$ for each atom $a$ with cover relation $\hat{0}\lessdot a$ colored $i$;
\item[{\rm (3)}]
$\kappa(v) = \vee_{\{ u\,|\, u\lessdot v \} } \kappa (u)$ with join taken in the left  weak Bruhat order, provided that $v$ covers two or more elements;
\item[{\rm (4)}]
$\kappa (v) = \kappa (u) $ if $u$ is the unique element of the crystal covered by $v$, provided that the cover relation $u\lessdot v$ is colored $i$ and there is also a cover relation $u'\lessdot u$ colored $i$;
\item[{\rm (5)}]
$\kappa (v) = s_i \kappa (u)$ if $u$ is the unique element of the crystal covered by $v$, provided that the cover relation $u\lessdot v$ is colored $i$ while there are no cover relations $u'\lessdot u$ colored $i$.
\end{enumerate}
 \end{theorem}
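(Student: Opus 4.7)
The plan is to proceed by induction on the rank of $v$ in the crystal poset, observing that rules (1)--(5) exhaust the possible cases for how $v$ sits relative to the lower portion of the poset. Rules (1) and (2) are precisely axioms (1) and (2) of the key map. For $v$ of higher rank, the natural dichotomy is whether $v$ covers at least two elements of the crystal or exactly one; the main work is to check that the stated rules correctly determine $\kappa(v)$ in each subcase, given $\kappa$ on all elements of strictly smaller rank.

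Consider first the case where $v$ covers two or more elements. I would establish rule (3) as follows. Since $\kappa$ is a poset map (Remark~\ref{key-is-poset-map}), every $\kappa(u)$ with $u \lessdot v$ lies below $\kappa(v)$ in the left weak order, so any common upper bound for $\{\kappa(u) : u\lessdot v\}$, and in particular their join should it exist, is at most $\kappa(v)$. For the reverse inequality, split into subcases. If all the $\kappa(u)$'s coincide with a common value $w$, the second lemma above, applied to any two of the covers, forces $\kappa(v)=w$. Otherwise some pair $u_1,u_2 \lessdot v$ satisfies $\kappa(u_1) \ne \kappa(u_2)$, and the first lemma yields $\kappa(v) = \kappa(u_1)\vee \kappa(u_2)$, which is at most $\bigvee_{u\lessdot v}\kappa(u)$. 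Either way $\kappa(v) = \bigvee_{u\lessdot v}\kappa(u)$, and in particular the join exists.

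Next, consider the case where $v$ covers a unique element $u$, with the cover $u\lessdot v$ colored $i$. Here I would invoke Lemma~\ref{key-for-unique-cover} directly: it asserts $\kappa(v) = \kappa(u)$ when $e_i(u)\ne \Bzero$ and $\kappa(v) = s_i\kappa(u)$ when $e_i(u)=\Bzero$. Translated to the poset language, $e_i(u)\ne \Bzero$ is equivalent to the existence of a cover relation $u' \lessdot u$ colored $i$, which is precisely the dichotomy separating rule (4) from rule (5). Note also that rule (2) is just the instance of rule (5) at $u=\hat{0}$, since $\hat{0}$ has no incoming edges; it is separated out to anchor the recursion. Thus the rules are mutually consistent and cover every possibility.

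The main anticipated obstacle is not in the derivation of the rules, which follows mechanically from the two lemmas and the axioms \eqref{changekey}--\eqref{firstkey} already in hand, but rather in handling rule (3) with care when $v$ covers many elements in an infinite Kac-Moody crystal, where the left weak Bruhat order is not known to be a lattice in general. The argument sketched above sidesteps this issue by exhibiting $\kappa(v)$ itself as the requisite join rather than appealing to any external lattice property; the existence of the join in the relevant subset of $W$ is part of the conclusion rather than a hypothesis.
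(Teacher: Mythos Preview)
Your proposal is correct and follows essentially the same route as the paper: the paper simply writes ``Putting together these lemmas yields the desired algorithm,'' and you have unpacked exactly how the three preceding lemmas combine to give rules (3), (4), and (5), with (1) and (2) being the stated axioms. Your observation that the existence of the join in rule (3) is a consequence rather than a hypothesis is a useful clarification that the paper leaves implicit.
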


{
\begin{corollary}\label{keyiso}
 Axioms {\rm (1)$-$(4)} determine the key. Isomorphic crystal posets have  identical keys; they are identified by the isomorphism (which is unique).
\end{corollary}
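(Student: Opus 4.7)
The plan is to show that the five rules of Theorem~\ref{first-key-algorithm} can be derived entirely from axioms (1)--(4), so that any map obeying those axioms must, by induction on rank, coincide with $\kappa$. Rules (1) and (2) are axioms (1) and (2) verbatim. Rule (3), treating the case when $v$ covers two or more elements, follows by combining the two preliminary lemmas stated just before Theorem~\ref{first-key-algorithm}, whose proofs use only that $\kappa$ is a poset map to the left weak Bruhat order---a property established in Remark~\ref{key-is-poset-map} as a consequence of axioms (3) and (4). Rules (4) and (5), treating the unique-cover case, form the content of Lemma~\ref{key-for-unique-cover}, whose proof appeals to axioms (3), (4), and (1). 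Because every case distinction required to choose among the rules---how many elements $v$ covers, whether $u$ admits an $i$-colored incoming cover---is visible purely from the edge-colored Hasse diagram, the recursion is determined by axioms (1)--(4) alone and thereby pins down $\kappa$ uniquely.

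For the second assertion, let $\varphi:B\to B'$ be any isomorphism of crystal posets, by which I mean a bijection of underlying sets that preserves cover relations together with their color labels. Since axioms (1)--(4) refer only to this colored cover structure, $\varphi$ intertwines the recursion described above, yielding $\kappa_{B'}(\varphi(b))=\kappa_B(b)$ for every $b\in B$. For the uniqueness of $\varphi$, note that $\hat{0}_B$ is characterized order-theoretically as the unique element covering nothing, so $\varphi(\hat{0}_B)=\hat{0}_{B'}$ is forced. The induction on rank now closes: given $v\in B$ of positive rank, pick any cover $u\lessdot v$ and let $i$ be its color; axiom (P2) guarantees at most one $i$-colored edge emanating from $\varphi(u)$ in $B'$, and since $\varphi$ preserves colored covers, $\varphi(v)$ must be the target of that edge. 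Hence $\varphi$ is uniquely determined from $\hat{0}_B$ outwards.

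The only step that requires genuine care is the appeal to axiom (3) hidden inside Lemma~\ref{key-for-unique-cover}: dropping it would leave axiom (4) permitting the alternative $\kappa(v)=\kappa(u)$ in rule (5), and the algorithm would then fail to single out a unique answer. Everything else in the corollary is bookkeeping through the two rank inductions sketched above.
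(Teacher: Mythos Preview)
Your argument is correct and matches the paper's intent: the paper states Corollary~\ref{keyiso} without proof, as an immediate consequence of Theorem~\ref{first-key-algorithm}, and you have simply made explicit the induction that the paper leaves to the reader.

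One small imprecision worth flagging: you say the two preliminary lemmas ``use only that $\kappa$ is a poset map,'' but in fact both proofs appeal to the explicit form of axiom~(4)---for instance, the second lemma needs $\kappa(v)\in\{\kappa(u),\,s_i\kappa(u)\}$ to derive the contradiction, and the first needs it to bound $\kappa(v)$ from above by the join rather than merely from below. The poset-map property by itself gives only $\kappa(u),\kappa(u')\le\kappa(v)$. Since axiom~(4) \emph{is} the changekey property, your conclusion that Rule~(3) follows from axioms~(3) and~(4) is unaffected; the dependence is just more direct than you indicated. Your treatment of the isomorphism and its uniqueness is clean and complete.
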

}
\begin{remarks}{\rm 
(1) Typically {in the literature} the key has been described in terms of  data associated to a choice of model for the crystal, for instance the SSYT model in type $A$ or the alcove model more generally.  We have just shown that it can be calculated without reference to  such data coming from a model for the crystal.  

(2) Corollary~\ref{keyiso} has the following application. We often construct crystal isomorphisms between two different models for crystals; an example is the filling map mentioned in Section~\ref{sectkey}, which maps the type $A$ alcove model to the SSYT model, see~\cite[Theorem~4.7]{LL}. However, the key constructions in the two models are often quite different, like in the mentioned example; in this case, it could be hard to show, based on these constructions, that the crystal isomorphism identifies the keys. This fact is now immediate. 

(3) One may construct a similar algorithm to calculate the left key in a finite type crystal by proceeding instead  from top to bottom through the crystal. This is based on similar axioms to (1)$-$(4). Indeed, $\overline{\kappa}(\hat{1})$ is the lowest  representative of the coset $w_\circ W_\lambda$ (see, e.g., \cite[Proposition~5.1]{lenccg}), and the analogues of \eqref{changekey} and \eqref{firstkey} are also standard (see, e.g., \cite[Proposition~3.19]{LeSh}). 
}
\end{remarks}

\section{Connectedness of saturated chains for lower and upper intervals in a  crystal poset}
 
 
 In this section, we will 
 %
 prove a crystal theoretic analogue of the \color{black} word property of Coxeter groups (any two reduced expressions for the same  Coxeter group 
 element are connected by a series of braid moves). \color{black} 
Results of Stembridge, and in  particular  Corollary~\ref{corsl},  lead us to establish Definition 
~\ref{stembridge-move-definition} below to serve as a suitable analogue for braid moves for crystals  in the simply laced case.  {Results of Sternberg will allow this notion and the subsequent Theorem~\ref{connect-moves} to be extended to the doubly laced case using exactly  the same line of argument (cf. Remark ~\ref{connected-remarks}, part (2)).}
 
 \begin{definition}\label{stembridge-move-definition} {\rm 
 Let us define a {\rm Stembridge move} on a simply laced
 crystal  as either the  replacement of a  saturated chain segment 
 $x\lessdot f_{c_2} (x) \lessdot f_{c_1}(f_{c_2}(x))$ by the saturated chain segment  
 $x \lessdot  f_{c_1}(x) \lessdot    f_{c_2}(f_{c_1}(x))$ in the event that 
 $f_{c_1}(f_{c_2}(x)) = f_{c_2}(f_{c_1}(x))$  or  the replacement of a saturated chain segment 
 $$x \lessdot f_{c_2}(x) \lessdot f_{c_1}(f_{c_2}(x)) \lessdot f_{c_1}(f_{c_1}(f_{c_2}(x))) \lessdot f_{c_2}(f_{c_1}(f_{c_1}(f_{c_2}(x))))$$ with the saturated chain segment 
 $$x \lessdot f_{c_1}(x) \lessdot f_{c_2}(f_{c_1}(x)) \lessdot f_{c_2}(f_{c_2}(f_{c_1}(x))) \lessdot f_{c_1}(f_{c_2}(f_{c_2}(f_{c_1}(x))))$$  in the event that $f_{c_2}(f_{c_1}(f_{c_1}(f_{c_2}(x)))) = f_{c_1}(f_{c_2}(f_{c_2}(f_{c_1}(x))))$.}
 \end{definition}
 
 
  \begin{theorem}\label{connect-moves}
 In the simply laced case, any two saturated chains in a lower interval $[\hat{0},v]$ are connected by a series of Stembridge moves. In addition, in finite type, the same result holds for upper intervals $[v,\hat{1}]$. 
 \end{theorem}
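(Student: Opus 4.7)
The plan is to prove the lower interval case by induction on $\mathrm{rank}(v)$ and to derive the upper interval case in finite type from it via the Lusztig involution (Remark \ref{labint}(2)), which reverses the poset while permuting colors by $i \mapsto i^*$ and hence carries Stembridge moves to Stembridge moves.

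For the lower interval induction, fix $v$ and assume the conclusion for all lower intervals of smaller rank. Given two saturated chains $C_1, C_2$ from $\hat{0}$ to $v$ with respective penultimate elements $u$ and $w$, consider two cases. If $u = w$, truncating removes the last step and reduces to the inductive hypothesis in $[\hat{0}, u]$; the resulting Stembridge moves lift back to $[\hat{0}, v]$ with the common final cover $u \lessdot v$ reattached. If $u \ne w$, then the covers $u \lessdot v$ and $w \lessdot v$ have distinct colors $i$ and $j$ by axiom (P2). I would now invoke the symmetric ``downward'' analog of Corollary \ref{corsl}: with $b = v$ and hypotheses $e_i(v) = u \ne \Bzero$ and $e_j(v) = w \ne \Bzero$, axioms (P5) and (P6) (rather than their $f$-versions (P$5'$) and (P$6'$) used for the original corollary) yield either a diamond case with some $x$ covered by both $u$ and $w$, or a length-$5$ case producing $x$ at rank $\mathrm{rank}(v) - 4$ with prescribed saturated chains up to $u$ and up to $w$.

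In either subcase, choose any saturated chain $C_3$ from $\hat{0}$ to $x$ (which exists by gradedness, Remark \ref{labint}(1)) and extend it in two ways: first along the prescribed path up through $u$ to $v$, yielding $C_3^u$, and second along the prescribed path up through $w$ to $v$, yielding $C_3^w$. By construction, $C_3^u$ and $C_3^w$ differ by exactly one Stembridge move of the corresponding type in Definition \ref{stembridge-move-definition}. The inductive hypothesis in $[\hat{0}, u]$ connects $C_1$ to $C_3^u$ (both share the final step $u \lessdot v$, so Stembridge moves within $[\hat{0}, u]$ lift unchanged to $[\hat{0}, v]$), and similarly $C_3^w$ is connected to $C_2$ via $[\hat{0}, w]$. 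Splicing these connections across the single Stembridge move in the middle connects $C_1$ to $C_2$.

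The main technical step is to state and verify the symmetric downward analog of Corollary \ref{corsl}, which is implicit in the excerpt but not made explicit; once (P5) and (P6) are applied to $b = v$, the diamond conclusion is immediate, but the color bookkeeping in the length-$5$ case must be unpacked to confirm that it matches precisely the second type of Stembridge move in Definition \ref{stembridge-move-definition}. Beyond this, the induction is essentially mechanical, and the upper interval statement in finite type follows from the lower interval conclusion via the Lusztig involution as noted.
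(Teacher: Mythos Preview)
Your proposal is correct and follows essentially the same argument as the paper: induction on the rank of $v$, use of the downward Stembridge structure (axioms (P5)/(P6) applied at $v$) to produce the common lower element $x$, extension of a single chain through $x$ in two ways differing by one Stembridge move, and the inductive hypothesis applied in $[\hat{0},u]$ and $[\hat{0},w]$; the upper interval case is likewise handled via the Lusztig involution (the paper also mentions the alternative of a direct dual argument). Your explicit treatment of the $u=w$ case and your remark that the needed downward analog of Corollary~\ref{corsl} is used but not stated are accurate; the paper simply asserts the existence of $x\wedge y$ without comment.
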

 
 \begin{proof}
 Use induction on the rank of $v$, which is well-defined by virtue of Remark~\ref{labint}~(1).  Now consider $m_1,m_2$ maximal chains with $x\lessdot v$ in $m_1$ colored $c_1$ and $y\lessdot v$ in $m_2$ colored $c_2$.  Denote by $x\wedge y$ the unique element that either (a) is covered by $x$ and $y$ with $f_{c_1}(x\wedge y) = y$ and $f_{c_2}(x\wedge y) = x$ or (b) is less than both $x$ and $y$ with $f_{c_2}(f_{c_2}(f_{c_1}(x\wedge y))) = x$ and $f_{c_1}(f_{c_1}(f_{c_2}(x\wedge y))) = y$.  Now $\hat{0} \le x\wedge y$, so let $n$ be any saturated chain from $\hat{0} $ to $x\wedge y$.  Let $m_3 $ (resp. $m_4$)  be a saturated chain from $\hat{0} $ to $v$ which includes $n$ and includes $x\lessdot v$ (resp. $y\lessdot v$).  By induction on the rank of $v$, we are given that $m_1$ (resp. $m_2$) is connected by Stembridge moves to $m_3$ (resp. $m_4$).  By construction, $m_3$ is connected to $m_4$ by a single Stembridge move.  Thus, $m_1$ is connected to $m_3$ is connected to $m_4$ is connected to $m_2$ by Stembridge moves, implying that $m_1$ is connected to $m_2$ by Stembridge moves, completing the proof for lower intervals. 

For upper intervals in finite type, a completely similar argument works. Alternatively, we reduce to lower intervals via Lusztig's involution on the corresponding crystal, see Remark~\ref{labint}~(2).
 \end{proof}
 
\begin{remarks}\label{connected-remarks}
{\rm 
 (1) The proof of Theorem~{\rm \ref{connect-moves}}
  is similar to the proofs of Theorem~{\rm 3.3.1} in  \cite{BB} and Lemma~{\rm 4.4} in \cite{HM}, dealing with the weak order and with more general posets admitting so-called $SB$-labelings, respectively.

 (2) Whenever there are local moves induced by $u\lessdot v$ and $u\lessdot w$, this proof above will hold.   In particular, this will allow a similar result in the doubly laced case via results of Sternberg providing the requisite moves in that setting.

(3) The proof of Theorem~{\rm \ref{connect-moves}} can easily be turned into a recursive algorithm for determining the sequence of moves connecting any pair of  saturated chains by proceeding from top to bottom so as to determine the requisite Stembridge moves.
}
\end{remarks}

\section{The fibers of the key map}\label{key-fibers-section}

We work in the generality of a root system $\Phi$ of rank $r$ for a symmetrizable Kac-Moody algebra $\mathfrak{g}$, and consider a highest weight $\mathfrak{g}$-crystal $B(\lambda)$. Let $W_K$, for $K\subseteq I=\{1,\ldots,r\}$, be the stabilizer of $\lambda$ (as a parabolic subgroup of $W$), which was previously denoted $W_\lambda$; in particular, $K=\emptyset$ if and only if $\lambda$ is regular. Recall the notation $\floor{w}$ for the lowest representative of the coset $wW_\lambda=wW_K$, and let $K^c$ denote the complement of $K$ in $I$. Fix a subset $J$ of $I$ such that $\Phi_J$ is a finite root system, and recall the notation $w_\circ(J)$ for the longest element in the parabolic subgroup $W_J$ of $W$. We denote by $\mathfrak{g}_J$ the Lie algebra corresponding to $\Phi_J$, and by $\lambda_J$ the projection of $\lambda$ to the weight lattice corresponding to $\Phi_J$. If $\Phi$ is a finite root system, let $w_\circ^J$ be the lowest representative $w_\circ(I) w_\circ(J)$ of the coset $w_\circ W_{J}$. 

 \begin{theorem}\label{unique-min}
   The fiber of the key map at $w_\circ(J)$ is non-empty if and only if $J\subseteq K^c$. Assuming this, the fiber has a minimum and a maximum, relative to the induced order from the crystal poset.  
 \end{theorem}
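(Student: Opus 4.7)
The proof splits into three claims: the non-emptiness criterion, the existence of a unique maximum, and the existence of a unique minimum. I will handle the first two together via the structure of the Demazure subcrystal at $w_\circ(J)$, and then transport the third to its dual via the Lusztig involution on the associated finite $\mathfrak{g}_J$-crystal.

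For non-emptiness and maximum: since $\kappa$ takes values in $W^K$, a non-empty fiber at $w_\circ(J)$ requires $w_\circ(J)\in W^K$, i.e.\ $w_\circ(J)\alpha_i>0$ for every $i\in K$. Because $w_\circ(J)\alpha_i=-\alpha_{i^*}<0$ for $i\in J$, while $w_\circ(J)\alpha_i\in\Phi^+\setminus\Phi_J^+$ for $i\in I\setminus J$ (since $W_J$ preserves $\Phi^+\setminus\Phi_J^+$ setwise), this is equivalent to $J\cap K=\emptyset$, that is, $J\subseteq K^c$. When this holds, $\lambda_J$ is regular for $\mathfrak{g}_J$, and every fiber element lies in the Demazure subcrystal $B_{w_\circ(J)}(\lambda)$, which—because $w_\circ(J)$ is the longest element of the finite parabolic $W_J$—coincides with the $\mathfrak{g}_J$-subcrystal of $B(\lambda)$ generated by $\hat{0}$, itself isomorphic as a $\mathfrak{g}_J$-crystal to the finite $B_{\mathfrak{g}_J}(\lambda_J)$. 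This $\mathfrak{g}_J$-crystal has a unique maximum, its $\mathfrak{g}_J$-lowest-weight vertex $b_{w_\circ(J)}$ of weight $w_\circ(J)\lambda$, whose key is $w_\circ(J)$ and which therefore lies in the fiber; since saturated chains within $B_{w_\circ(J)}(\lambda)$ use only colors from $J$, the induced order from $B(\lambda)$ agrees with the intrinsic $\mathfrak{g}_J$-crystal poset on this subcrystal, making $b_{w_\circ(J)}$ both a witness of non-emptiness and the unique maximum of the fiber.

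For the minimum: on the finite crystal $B_J:=B_{w_\circ(J)}(\lambda)$, the Lusztig involution $S_J$ is a poset anti-automorphism. Applying Theorem~\ref{relkeys} inside $B_J$—using that the $W_J$-stabilizer of $\lambda_J$ is trivial—yields $\overline{\kappa}_J(S_J(b))=w_\circ(J)\,\kappa_J(b)$, so $S_J$ bijects the right-key fiber at $w_\circ(J)$ with the left-key fiber at $e$, reversing the order. Unique existence of the minimum of the former thus reduces to unique existence of the maximum of the latter, which I expect to be the main obstacle: unlike the order-filter structure exploited in the maximum argument, the left-key fiber at $e$ is an order ideal in $B_J$, so the Demazure reduction does not dualize cleanly. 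My plan is to work in the alcove model with a $\lambda$-chain of the form appearing in Definition~\ref{defkeys}—where the left key is $\overline{\kappa}(F)=r_{\overline\jmath_1}\cdots r_{\overline\jmath_a}$ expressed purely in terms of the initial segment $\overline I$—identify an explicit candidate maximum $F^*$ (expected to realize $w_\circ(J)$ using only folding positions outside $\overline I$, so that $\overline{\kappa}(F^*)=e$), and then verify via the recursive key algorithm of Theorem~\ref{first-key-algorithm} together with \eqref{changekey} and \eqref{firstkey} that every admissible $F$ with $\overline{\kappa}(F)=e$ lies at or below $F^*$ in the crystal poset.
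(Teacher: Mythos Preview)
Your treatment of non-emptiness and the maximum is correct and matches the paper's: you use Lemma~\ref{replem} to reduce to the finite $\mathfrak{g}_J$-crystal $B(\lambda_J)$, whose lowest-weight vertex is the fiber maximum. Your check that the induced order from $B(\lambda)$ agrees with the intrinsic $\mathfrak{g}_J$-order is also right (weights in $B_{w_\circ(J)}(\lambda)$ differ from $\lambda$ only by $J$-roots, so any $B(\lambda)$-chain between two such vertices uses only $J$-colors).

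The gap is in the minimum. Your Lusztig-involution step is a detour that buys nothing: as Remark~\ref{fibers-lr-keys} observes, $S_J$ carries the right-key fiber at $w_\circ(J)$ anti-isomorphically to the \emph{left}-key fiber at $e$, so you have merely exchanged one extremum problem for its equally hard dual. Your proposed verification tools---Theorem~\ref{first-key-algorithm}, \eqref{changekey}, \eqref{firstkey}---are all right-key statements and do not give you control over the left key along an ascending chain; in particular they do not tell you, given $F$ with $\overline{\kappa}(F)=e$ and $F\ne F^*$, which $f_p$ to apply so that the left key remains $e$.

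The paper works directly on the right-key fiber at $w_\circ$ (after the reduction to finite type, regular $\lambda$, $J=I$). The candidate minimum $F_*$ is constructed from a carefully chosen alcove path passing through $A_\circ-\lambda'$ and $w_\circ(A_\circ)-\lambda'$ where $\lambda'=\lambda-\rho$. The heart of the verification is Lemma~\ref{lemreford}: for a reflection order, $r_1\cdots r_{j-1}(\beta_j)$ is always a simple root. Given any $F\ne F_*$ with $\kappa(F)=w_\circ$, one locates the largest index $t$ in $F_*\setminus F$, applies Lemma~\ref{lemreford} to identify a simple root $\alpha_p=-\gamma_t$, and then reads off from the sign pattern $(\sigma_1,\ldots,\sigma_{n+1})$ that $-\delta(F,p)\ge 2$; this is precisely what guarantees that $e_p(F)$ stays in the fiber (via the first case of \eqref{changekey}), allowing an inductive descent to $F_*$. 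This reflection-order ingredient is what your plan is missing, and neither the recursive key algorithm nor the abstract properties \eqref{changekey}--\eqref{firstkey} supply it.
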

 
We will need the following two lemmas. For the first one, we consider a reflection order $(\beta_1,\ldots,\beta_N)$ on the positive roots of a finite root system $\Phi$, and let $r_i:=s_{\beta_i}$. 

\begin{lemma}\label{lemreford}
The root
\[r_1 r_2\ldots r_{j-1}(\beta_j)=-w_\circ r_N r_{N-1}\ldots r_{j+1}(\beta_j)\]
is a simple root.
\end{lemma}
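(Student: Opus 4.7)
\textbf{Proof proposal for Lemma \ref{lemreford}.}

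The plan is to reduce both expressions to the simple root $\alpha_{i_j}$ by means of a single, clean identity for the cumulative products of the reflections $r_k = s_{\beta_k}$. Specifically, starting from a reduced expression $w_\circ = s_{i_1} \cdots s_{i_N}$, we have $\beta_k = s_{i_1} \cdots s_{i_{k-1}}(\alpha_{i_k})$ and hence
\[
r_k \;=\; s_{i_1} s_{i_2} \cdots s_{i_{k-1}} s_{i_k} s_{i_{k-1}} \cdots s_{i_2} s_{i_1}.
\]
The first step would be to prove by induction on $k$ that
\[
r_1 r_2 \cdots r_k \;=\; s_{i_k} s_{i_{k-1}} \cdots s_{i_1};
\]
the inductive step multiplies the hypothesis on the right by the above expression for $r_{k+1}$ and collapses the telescoping block $(s_{i_k}\cdots s_{i_1})(s_{i_1}\cdots s_{i_k})$ to the identity. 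With this identity in hand, the first expression becomes
\[
r_1 r_2 \cdots r_{j-1}(\beta_j) \;=\; s_{i_{j-1}} \cdots s_{i_1} \cdot s_{i_1} \cdots s_{i_{j-1}}(\alpha_{i_j}) \;=\; \alpha_{i_j},
\]
which is manifestly a simple root.

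For the second expression, I would use the same identity at the top: since $r_1 \cdots r_N = s_{i_N} \cdots s_{i_1} = w_\circ^{-1} = w_\circ$, we can write
\[
r_{j+1} r_{j+2} \cdots r_N \;=\; (r_1 \cdots r_j)^{-1}(r_1 \cdots r_N) \;=\; s_{i_1} s_{i_2} \cdots s_{i_j}\, w_\circ,
\]
and therefore, taking inverses (all $r_k$ are involutions),
\[
r_N r_{N-1} \cdots r_{j+1} \;=\; w_\circ\, s_{i_j} s_{i_{j-1}} \cdots s_{i_1}.
\]
Applying this to $\beta_j = s_{i_1} \cdots s_{i_{j-1}}(\alpha_{i_j})$, the inner product collapses as before to $s_{i_j}(\alpha_{i_j}) = -\alpha_{i_j}$, yielding $r_N \cdots r_{j+1}(\beta_j) = -w_\circ(\alpha_{i_j})$. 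Multiplying by $-w_\circ$ and using $w_\circ^2 = 1$ gives $\alpha_{i_j}$, matching the first expression.

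There is no real obstacle here beyond careful bookkeeping of indices; the only substantive ingredient is the telescoping identity $r_1 \cdots r_k = s_{i_k} \cdots s_{i_1}$, and once that is established the rest is direct computation. One could alternatively package the second equality abstractly by observing that the reversed sequence $(\beta_N, \ldots, \beta_1)$ is a reflection order arising from another reduced expression for $w_\circ$, so the first half of the lemma applied to this reversed order already forces the second expression to be a simple root; however, the direct route above has the advantage of simultaneously identifying both expressions as the specific simple root $\alpha_{i_j}$.
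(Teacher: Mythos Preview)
Your proof is correct and follows essentially the same approach as the paper: both use the telescoping identity $r_1\cdots r_k = s_{i_k}\cdots s_{i_1}$ (which the paper invokes implicitly while you prove it by induction) to reduce the first expression to $\alpha_{i_j}$, and then both use $r_1\cdots r_N = w_\circ$ to rewrite the second expression in terms of $r_1\cdots r_j$ and collapse it the same way. The only difference is cosmetic: the paper directly writes $w_\circ r_N\cdots r_{j+1}=r_1\cdots r_j$ and applies it to $\beta_j$, whereas you first isolate $r_N\cdots r_{j+1}$ and multiply by $-w_\circ$ afterward.
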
 

\begin{proof} Recall that a reflection order is determined by a reduced word $s_{i_1}\ldots s_{i_N}$ for $w_\circ$, in the sense that $\beta_j=s_{i_1}\ldots s_{i_{j-1}}(\alpha_{i_j})$. It follows that $r_j=s_{i_1}\ldots s_{i_{j-1}}s_{i_j}s_{i_{j-1}}\ldots s_{i_1}$ and 
\[r_1 \ldots r_{j-1}(\beta_j)=s_{i_{j-1}}\ldots s_{i_1}\,(s_{i_1}\ldots s_{i_{j-1}}(\alpha_{i_j}))=\alpha_{i_j}\,.\]
Furthermore, since $r_1\ldots r_N=w_\circ$, we also have $w_\circ r_N\ldots r_{j+1}=r_1\ldots r_j$, which leads to the conclusion of the proof.
\end{proof}

For the second lemma, we need the generalization of the alcove model to the symmetrizable Kac-Moody case in \cite{LP1}. This looks formally the same as the model in the finite case, described in Section~\ref{alcmod}. The main difference is that the $\lambda$-chain is infinite, and is no longer defined in terms of alcoves (but in terms of an equivalent condition, which generalizes to the infinite case). 

\begin{lemma}\label{replem}  The Demazure $\mathfrak{g}$-crystal $B_{w_\circ(J)}(\lambda)$ is isomorphic to the highest weight $\mathfrak{g}_J$-crystal $B(\lambda_J)$.
\end{lemma}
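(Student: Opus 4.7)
My plan is to work in the alcove model (extended to symmetrizable Kac-Moody generality as in \cite{LP1}) and choose a particularly adapted $\lambda$-chain. Specifically, I would construct a $\lambda$-chain $\Gamma=(\beta_1,\beta_2,\ldots)$ for $B(\lambda)$ whose initial segment $\Gamma_J:=(\beta_1,\ldots,\beta_M)$ is a $\lambda_J$-chain for the $\mathfrak{g}_J$-crystal $B(\lambda_J)$. Since $\Phi_J$ is a finite root system, the $\lambda_J$-chain has finite length $M$, and $\Gamma_J$ is encoded by a reduced word for $w_\circ(J)$. In the finite case this initial segment can be extended because any reduced word for $w_\circ(J)$ can be completed to a reduced word for $w_\circ(I)$; in the Kac-Moody generality, one extends $\Gamma_J$ to a $\lambda$-chain using the combinatorial characterization of $\lambda$-chains in \cite{LP1} (the finite prefix $\Gamma_J$ satisfies the required interleaving conditions among $\Phi_J$-hyperplanes, and appending any $\lambda$-chain whose roots avoid $\Phi_J^+$ being repeated early preserves admissibility).

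The key observation is then that an admissible subset $F$ with respect to $\Gamma$ satisfies $\kappa(F)\le w_\circ(J)$ if and only if $F\subseteq[M]$. The ``if'' direction is immediate: if $F\subseteq[M]$, the Bruhat chain \eqref{eqn:admissible} uses only reflections $r_j$ with $\beta_j\in\Phi_J$, so $\kappa(F)\in W_J$, and moreover $\kappa(F)$ is automatically a lowest coset representative of $W^\lambda$ lying in $W_J$, hence $\le w_\circ(J)$. For the converse, I would use that $W_J$ stabilizes $\Phi\setminus\Phi_J$: if $F$ contains its largest element $j>M$, then $\beta_j\notin\Phi_J$, so the last cover in \eqref{eqn:admissible} exits $W_J$, forcing $\kappa(F)\not\le w_\circ(J)$. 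Then I would define $\varphi\colon B_{w_\circ(J)}(\lambda)\to B(\lambda_J)$ by $\varphi(F)=F$, where the right-hand side is regarded as an admissible subset for the $\Gamma_J$-model of $B(\lambda_J)$; this is a bijection by the characterization just described.

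To verify that $\varphi$ intertwines the crystal operators $f_i,e_i$ for $i\in J$, I would use the description of these operators in \eqref{eqn:rootF} and \eqref{eqn:rootE} in terms of the indices $k$ with $\gamma_k=\pm\alpha_i$. For $F\subseteq[M]$, all reflections $r_j$ with $j\in F$ lie in $W_J$, so by the $W_J$-invariance of both $\Phi_J$ and its complement, the folded roots $\gamma_k=r_{j_1}\cdots r_{j_p}(\beta_k)$ for $k>M$ satisfy $\gamma_k\in\Phi_J$ if and only if $\beta_k\in\Phi_J$, which cannot occur by the choice of $\Gamma$. Consequently, the relevant index set $\widehat I_{\alpha_i}(F)$ lies in $[M]$, and the piecewise-linear function $g_{\alpha_i}$ is the same in either model. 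The weight-match $\wt(F)\mapsto\wt(F)$ projected to the $\mathfrak{g}_J$-weight lattice follows from the same truncation argument applied to $\mu(F)$. The main obstacle is the construction of $\Gamma$ extending $\Gamma_J$ in the Kac-Moody setting, since this requires verifying the interleaving axioms for $\lambda$-chains after a nontrivial finite prefix is fixed; in the finite case this is a routine consequence of the reflection order extension lemma (Lemma~\ref{lemreford}), while in Kac-Moody generality one appeals to the construction in \cite{LP1} of the lex $\lambda$-chain (which naturally places $\Phi_J$-roots first when $J$ is chosen compatibly with the order on simple roots).
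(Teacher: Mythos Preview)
Your approach is genuinely different from the paper's, and it contains a real error at its core: a $\lambda$-chain is \emph{not} in general encoded by a reduced word for $w_\circ$. A $\lambda_J$-chain has length $\sum_{\alpha\in\Phi_J^+}\langle\lambda,\alpha^\vee\rangle$, not $\ell(w_\circ(J))=|\Phi_J^+|$, so your appeal to ``any reduced word for $w_\circ(J)$ can be completed to a reduced word for $w_\circ(I)$'' does not justify the extension of $\Gamma_J$ to a $\lambda$-chain. Worse, the desired $\lambda$-chain need not exist at all. Take $\Phi$ of type $A_2$, $J=\{1\}$, and $\lambda=2\omega_1$. Then $\Gamma_J=(\alpha_1,\alpha_1)$, and you would need a $\lambda$-chain beginning $(\alpha_1,\alpha_1,\ldots)$. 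But after crossing $H_{\alpha_1,0}$ and $H_{\alpha_1,-1}$ from $A_\circ$ without crossing any other hyperplane, one would need an alcove with $\langle x,\alpha_1^\vee\rangle\in(-2,-1)$, $\langle x,\alpha_2^\vee\rangle\in(0,1)$, and $\langle x,(\alpha_1+\alpha_2)^\vee\rangle\in(0,1)$; summing the first two coordinates forces $\langle x,(\alpha_1+\alpha_2)^\vee\rangle<0$, a contradiction. So no reduced alcove path from $A_\circ$ to $A_{-\lambda}$ crosses both $\alpha_1$-hyperplanes first, and your bijection $\varphi$ cannot be set up as described.

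The paper sidesteps this entirely by working with an \emph{arbitrary} $\lambda$-chain. It introduces the branching crystal $B(\lambda,J)$, namely the connected component of the highest weight vertex in the restriction of $B(\lambda)$ to colors in $J$, and observes that $B(\lambda,J)\simeq B(\lambda_J)$ as $\mathfrak{g}_J$-crystals simply by comparing highest weights. It then shows that $B(\lambda,J)$ and $B_{w_\circ(J)}(\lambda)$ have the same vertex set inside $B(\lambda)$: both consist of those admissible $F$ whose folding positions $j_k$ all satisfy $\beta_{j_k}\in\Phi_J$. For the Demazure side this follows because $\kappa(F)\le w_\circ(J)$ forces every element of the Bruhat chain \eqref{eqn:admissible} into $W_J$ by the subword property (this is also the correct fix for the gap in your converse direction, where ``the last cover exits $W_J$'' is not justified without first knowing the penultimate element lies in $W_J$). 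For the $B(\lambda,J)$ side the paper argues by induction on applications of $f_j$, $j\in J$, using the explicit description \eqref{eqn:rootF}. Finally it checks that no $f_j$ with $j\notin J$ can stay inside $B_{w_\circ(J)}(\lambda)$. This route never requires the $\Phi_J$-roots to sit in an initial segment of the chain, which is why it succeeds where your construction fails.
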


\begin{proof} Consider the restriction of $B(\lambda)$ to $U_q(\mathfrak{g}_J)$, that is, to edge labels in $J$. Let $B(\lambda,J)$ be the component of the highest weight vertex of $B(\lambda)$ in the restriction. By comparing highest weights, it is clear that $B(\lambda,J)\simeq B(\lambda_J)$ as $\mathfrak{g}_J$-crystals.

Let us now compare the $\mathfrak{g}_J$-crystal $B(\lambda,J)$ to the Demazure $\mathfrak{g}$-crystal $B_{w_\circ(J)}(\lambda)$. We first check that their  vertex sets coincide as subsets of $B(\lambda)$, by using the generalized alcove model to describe the latter (see above). The elements $F\in B(\lambda)$ which belong to $B_{w_\circ(J)}(\lambda)$ are described by the condition $\kappa(F)\le w_\circ(J)$ in strong Bruhat order (see Section~\ref{sectkey}). This condition implies (and is in fact equivalent to the fact) that the roots of the $\lambda$-chain corresponding to $F$ are all in $\Phi_J$. Indeed, all elements $w$ in the chain \eqref{eqn:admissible} are below $w_\circ(J)$, which implies that they are in $W_J$ (by the subword property of the strong Bruhat order). 

On the other hand, the elements $F$ of the crystal $B(\lambda,J)$ are also described by the condition mentioned above. To prove this, first note that any such  $F$ is obtained by acting on the highest weight vertex $\emptyset$ of $B(\lambda)$ with $f_j$ for $j\in J$. We use induction to check that if $F$ has the mentioned property, then so does $f_j(F)$. To see this, we recall the construction of $f_j(F)$ in Section~\ref{alcmod}, particularly \eqref{defw} and \eqref{eqn:rootF}. With this notation, we have $w(\beta_k)=\alpha_j$ for some $w$ in the chain \eqref{eqn:admissible}, where we know that $w\in W_J$ by the induction hypothesis. It follows that $\beta_k=w^{-1}(\alpha_j)$ lies in $\Phi_J$. Moreover, a similar reasoning shows that any $F$ with the mentioned property lies in $B(\lambda,J)$; more precisely, we apply to $F$ a sequence of crystal operators $e_j$ with $j\in J$, until we reach $\emptyset$.

It is clear that all the edges of $B(\lambda,J)$ are in $B_{w_\circ(J)}(\lambda)$. To show that the two crystals are isomorphic, we need to check that, for $j\not\in J$ and $F\in B_{w_\circ(J)}(\lambda)$, we cannot have $f_j(F)\in B_{w_\circ(J)}(\lambda)$. This is done based on the above description of the vertices of $B_{w_\circ(J)}(\lambda)$, and by using again, in a similar way, the description of the crystal operators in Section~\ref{alcmod}; in particular, the relation $w(\beta_k)=\alpha_j$ now implies $\beta_k\not\in\Phi_J$. 
\end{proof}

We will implicitly use the well-known fact that the sequence of roots corresponding to an alcove path from $A_\circ$ to $w_\circ(A_\circ)=-A_\circ$ (see Definition~\ref{deflc}) is a reflection order on $\Phi^+$; see~\cite[Remark~10.4~(2)]{LP1}. 

\begin{proof}[Proof of Theorem~{\rm \ref{unique-min}}]  Since the key is an element of the set $W^K$ of lowest coset representatives modulo $W_K$, it follows that $w_\circ(J)$ can be a key if and only if $J\subseteq K^c$. 

By Lemma~\ref{replem}, it suffices to assume that $\Phi$ is a finite root system, $\lambda$ is a regular weight, and $J=I$, so we can use the alcove model in the finite case. Indeed, the fiber of the key map at $w_\circ(J)$ is contained in $B_{w_\circ(J)}(\lambda)$ (cf. the description of the Demazure crystal, recalled in Section~\ref{sectkey}), and the crystal isomorphism in Lemma~\ref{replem} preserves keys (by Corollary~\ref{keyiso}). Since in the above special case $B(\lambda)$ has a maximum, whose key is $w_\circ=w_\circ(I)$, this is also the maximum in the fiber of the key map at $w_\circ$. Thus, it suffices to prove that the mentioned fiber has a minimum. 

It follows from definitions that $\lambda':=\lambda-\rho$ is a dominant weight. Based on the above discussion, we can choose an alcove path from $A_0:=A_\circ$ to $A_m:=A_\circ-\lambda$ which contains the alcoves $A_k:=A_\circ-\lambda'$ and $A_l:=w_\circ(A_\circ)-\lambda'$. The hyperplanes separating $A_k$ and $A_l$ are all the hyperplanes through $-\lambda'$ orthogonal to the roots in $\Phi^+$. Let $F_*:=\{k+1,k+2,\ldots,l\}$. Clearly, $F_*$ is an admissible subset and $\kappa(F_*)=w_\circ$. 

We will show that $F_*$ is the desired minimum. We can see that none of the hyperplanes separating $A_l$ and $A_m$ are orthogonal to a simple root; indeed, the sequence $(\beta_{k+1},\,\beta_{k+2},\,\ldots,\,\beta_m)$ contains each simple root $\alpha_i$ precisely once, as $\langle\rho,\alpha_i^\vee\rangle=1$, but there is such an occurrence in $(\beta_{k+1},\,\beta_{k+2},\,\ldots,\,\beta_l)$. Now consider an arbitrary admissible subset $F\ne F_*$ with $\kappa(F)=w_\circ$. By the above remark concerning the hyperplanes separating $A_l$ and $A_m$, we have $F\subseteq \{1,2,\ldots,l\}$, because the last element of $F$ must clearly be a simple root (recall Definition~\ref{defadm}~(1) of an admissible subset). As $F\ne F_*$, the set $\{k+1,k+2,\ldots,l\}\setminus F$ is non-empty, so let $t$ be its maximum. Let $F=\{j_1<j_2<\ldots<j_a<\ldots<j_s\}$ where $j_a<t<j_{a+1}$ (if $a=s$ then we drop $j_{a+1}$). Recall the definition \eqref{defw} of the root $\gamma_t$, namely
\[\gamma_t=r_{j_1}r_{j_2}\ldots r_{j_a}(\beta_t)=w_\circ r_{j_s}\ldots r_{j_{a+1}}(\beta_t)\,.\]
Due to the choice of $t$, Lemma~\ref{lemreford} applies, so $\gamma_t=-\alpha_p$, where $\alpha_p$ is a simple root. 

Now recall from Section~\ref{alcmod} the definition of the crystal operator $e_p$, which is based on the sequence $(\sigma_1,\ldots,\sigma_{n+1})$. We have $\langle\gamma_\infty,\alpha_p^\vee\rangle=\langle \rho,w_\circ(\alpha_p^\vee)\rangle<0$, since $w_\circ(\alpha_p)$ is a negative root, so $\sigma_{n+1}=-1$. Moreover, due to the choice of $t$ and the fact that $\gamma_t=-\alpha_p$, we have $\sigma_n=(1,-1),\,\ldots,\,\sigma_{q+1}=(1,-1),\,\sigma_q=(-1,-1)$, for some $q$. By property (C3) of the sequence $(\sigma_1,\ldots,\sigma_{n+1})$, we can now deduce $\sigma_{q-1,2}=-1$. By \eqref{eqn:graph_height} and \eqref{risedepth}, we see that $-\delta(F,p)\ge 2$, so we can apply $e_p$ at least twice to $F$. By \eqref{changekey}, we have $\kappa(e_p(F))=\kappa(F)=w_\circ(J)$, so $e_p(F)$ is still in the considered fiber. We can repeat the whole procedure above for $e_p(F)$ instead of $F$, and continue in this way; the process finishes (because the crystal has a minimum), and at that point it is clear that we reached $F_*$. We conclude that $F \ge 
 F_*$ in the crystal poset, as desired.
\end{proof}

\begin{remark}\label{fibers-lr-keys}{\rm 
In the case of a finite type crystal, the existence of the minimum in the fiber cannot be deduced from that of the maximum, via the Lusztig involution on the crystal (see Remark~\ref{labint}~(2)). Indeed, this involution does not map a fiber to another fiber. Instead, as Theorem~\ref{relkeys} shows, it maps a fiber of the (right) key map anti-isomorphically to one of the left key map, and viceversa. This implies that the fibers of the left key map at $\floor{w_\circ^J}$ also have a minimum and a maximum if $J\subseteq K^c$. 
}
\end{remark}
 
We will now give an example to show that Theorem~\ref{unique-min} fails for fibers of arbitrary Weyl group elements; in fact, a fiber can even be disconnected, as shown below. This justifies our focus on longest elements of parabolic subgroups.

\begin{example}{\rm Consider type $A_3$, with $\lambda=(3,2)$ and $w=2413$ in the symmetric group $S_4$. The fiber of the key map at $w$ has the structure shown in Figure~\ref{badfiber}, as an induced poset from the crystal poset $B(\lambda)$.

\begin{figure}[h]
    \centering
\[		
\begin{array}{cc}
\begin{diagram}
\node{\tableau{1&2&2\\2&4}}\\
\node{\tableau{1&1&2\\2&4}}\arrow{n,l}{1}
\end{diagram}
&\;\;\;\;\;\;\;\;\;\;\;
\begin{diagram}
\node[2]{\tableau{2&2&2\\4&4}}\\
\node[1]{\tableau{2&2&2\\3&4}}\arrow{ne,t}{3}
\node[2]{\tableau{1&2&2\\4&4}}\arrow{nw,t}{1}\\
\node[1]{\tableau{1&2&2\\3&4}}\arrow{n,l}{1}\arrow{nee,t}{3}
\node[2]{\tableau{1&1&2\\4&4}}\arrow{n,r}{1}\\
\node[2]{\tableau{1&1&2\\3&4}}\arrow{nw,b}{1}\arrow{ne,b}{3}
\end{diagram}
\end{array}\]
\caption{}
\label{badfiber}
\end{figure}
}
\end{example}
  
\section{M\"obius function and homotopy type for lower and upper intervals in a crystal poset}

 In this section,
    we will determine the M\"obius function for lower and upper intervals of a crystal as well as the homotopy type of the order complex of the proper part of each such interval.  These results will rely on 
 Theorem~\ref{unique-min}, namely  
  the result that each fiber  $\kappa^{-1}(w_\circ (J))$ has a unique smallest and unique largest element in a crystal. Unless otherwise specified, we continue to work in the symmetrizable Kac-Moody setup  
    at the beginning of Section ~\ref{key-fibers-section}, and we use the notation introduced there.

First  let us recall  a result that we will need that appears as Lemma 3.2.3 in  \cite{BB}.  It is important to note that this result does not require $W$ to be of finite type:
 
 \begin{theorem}[Lemma 3.2.3 of \cite{BB}]\label{longest-element-description}
 Let $J\subseteq S$.  Then $\vee_{j\in J} a_j$ exists if and only if $W_J$ is finite, in which case
 $\vee_{j\in J} a_j = w_\circ (J)$.  
Otherwise, there is no upper bound  for the elements of $J$.
 \end{theorem}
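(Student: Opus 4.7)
The plan is to translate the upper-bound condition into a condition on right descent sets and then dispatch both cases via the parabolic factorization. In the left weak Bruhat order the cover relations are $u\lessdot s_i u$, so a simple reflection $s_j$ satisfies $s_j\le v$ exactly when some reduced expression for $v$ ends in $s_j$, equivalently when $\ell(vs_j)<\ell(v)$. Hence $v$ is an upper bound for $\{s_j:j\in J\}$ if and only if every $s_j$ with $j\in J$ is a right descent of $v$.

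The main technical input I would use is the parabolic factorization: every $v\in W$ decomposes uniquely as $v=v^J w_J$ with $v^J\in W^J$ the shortest representative of the coset $vW_J$, $w_J\in W_J$, and $\ell(v)=\ell(v^J)+\ell(w_J)$. For any $j\in J$, the product $vs_j=v^J(w_Js_j)$ again has $v^J\in W^J$ and $w_Js_j\in W_J$, so length-additivity gives $\ell(vs_j)=\ell(v^J)+\ell(w_Js_j)$. Comparing this with $\ell(v)=\ell(v^J)+\ell(w_J)$ shows that $j$ is a right descent of $v$ in $W$ if and only if $j$ is a right descent of $w_J$ inside $W_J$. Therefore $v$ is an upper bound of $\{s_j:j\in J\}$ precisely when $w_J$ has every element of $J$ as a right descent in $W_J$.

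With this translation in hand, suppose first that $W_J$ is finite, with longest element $w_\circ(J)$. The only element of $W_J$ whose right descent set contains all of $J$ is $w_\circ(J)$ itself, so the upper bounds in $W$ are exactly the elements of the form $v^J\cdot w_\circ(J)$ with $v^J\in W^J$ and lengths adding. Each such $v$ satisfies $w_\circ(J)\le v$ in left weak order, and $w_\circ(J)$ appears in this family by taking $v^J=e$; hence it is the least upper bound and $\vee_{j\in J} s_j=w_\circ(J)$. If instead $W_J$ is infinite, I would invoke the standard Coxeter-theoretic fact that an element of a Coxeter group whose right descent set equals the full generating set must be the longest element of that group, which exists if and only if the group is finite. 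No $w_J\in W_J$ then has $J$ contained in its right descent set, so by the translation there is no upper bound for $\{s_j:j\in J\}$ in $W$ at all, which is the stronger second conclusion.

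The only delicate point is the identity relating the right descent set of $v$ in $W$ and that of $w_J$ in $W_J$; this rests on the length-additivity of the parabolic factorization, itself a routine consequence of the exchange condition. Everything else then reduces to classical statements about longest elements of parabolic Coxeter subgroups.
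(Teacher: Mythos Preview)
Your argument is correct. Note, however, that the paper does not actually prove this statement: it is quoted verbatim as Lemma~3.2.3 of \cite{BB} and used as a black box, so there is no ``paper's own proof'' to compare against.

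That said, your approach is exactly the standard one and is essentially how the result is established in \cite{BB}. The key reduction---that $s_j \le v$ in left weak order iff $s_j$ is a right descent of $v$, and that via the parabolic factorization $v = v^J w_J$ this is equivalent to $s_j$ being a right descent of $w_J$ inside $W_J$---is clean and correct. The one step you might make slightly more explicit is why $w_\circ(J) \le v^J w_\circ(J)$ in left weak order for every $v^J \in W^J$: this follows from length-additivity $\ell(v^J w_\circ(J)) = \ell(v^J) + \ell(w_\circ(J))$, which in left weak order is precisely the condition for $w_\circ(J)$ to be a suffix of (and hence below) $v^J w_\circ(J)$. Otherwise the argument is complete.
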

 
  In particular, this result directly  implies that  $w\in W$ is the longest element $w_\circ (J)$  of a finite parabolic subgroup of $W$ if and only if  each simple reflection appearing in a reduced expression for $w$ appears as the leftmost letter in a reduced expression for $w$ which in turn is true if and only if each such  simple reflection 
  also  appears as the rightmost letter in some reduced expression for $w$.   It may also be helpful to note that the closed interval $[\hat{0},w]$ in weak Bruhat order includes exactly the same elements as  the closed interval   $[\hat{0},w]$ in strong Bruhat order if and only if $w = w_\circ (J)$ for some parabolic subgroup  $W_J$. 
 
We will also need the following consequence of Lemma~3.2.3 from \cite{BB} that is closely related to ideas discussed in \cite{BB}. 
 
  \begin{corollary}\label{weak-lower-bound}
  For each element $w\in W$, there is a unique maximal element below $w$ in the weak Bruhat order among the longest elements $w_\circ(J)$ of parabolic subgroups.
  \end{corollary}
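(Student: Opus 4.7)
My plan is to identify the desired maximum element explicitly. Given $w \in W$, let
\[
L(w) := \{ i \in I \,:\, s_i \le w \text{ in left weak Bruhat order}\}
\]
be the set of indices of simple reflections sitting below $w$ in weak order. I claim that $w_\circ(L(w))$ is well-defined and is the unique maximal parabolic longest element below $w$.

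The first step is to show that $W_{L(w)}$ is finite and that $w_\circ(L(w)) \le w$. Since $s_i \le w$ for every $i \in L(w)$, the element $w$ is an upper bound in weak order for the set $\{ s_i \,:\, i \in L(w)\}$. Theorem~\ref{longest-element-description} then guarantees that the join $\bigvee_{i \in L(w)} s_i$ exists, which forces $W_{L(w)}$ to be finite and identifies the join with $w_\circ(L(w))$. Hence $w_\circ(L(w))$ lies below $w$ in weak order.

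The second step is to show this is the maximum over all parabolic longest elements below $w$. Suppose $w_\circ(J) \le w$ for some $J \subseteq I$ with $W_J$ finite. For each $j \in J$ we have $s_j \le w_\circ(J) \le w$, so $j \in L(w)$; thus $J \subseteq L(w)$. Consequently, every $s_j$ with $j \in J$ lies below $w_\circ(L(w))$, so $w_\circ(L(w))$ is an upper bound for $\{ s_j \,:\, j \in J\}$. Applying Theorem~\ref{longest-element-description} once more identifies $w_\circ(J)$ with the join $\bigvee_{j\in J} s_j$, which must therefore be $\le w_\circ(L(w))$. Uniqueness of the maximum follows immediately.

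There is no substantive obstacle here beyond correctly bookkeeping the two applications of Theorem~\ref{longest-element-description}: one to the upper-bounded family $\{ s_i : i \in L(w)\}$ to produce the candidate maximum, and one to each competitor $J$ to realize $w_\circ(J)$ as a join of simple reflections and thereby bound it above by $w_\circ(L(w))$. Note that the argument makes no appeal to finiteness of $W$ itself, so the corollary holds in the full generality of the corollary statement; the only finiteness that enters is the (automatic) finiteness of $W_{L(w)}$ extracted from the existence of the join.
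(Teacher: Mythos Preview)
Your proof is correct and follows essentially the same approach as the paper: both identify the desired maximum as $w_\circ(J)$ where $J$ is the set of simple reflections (atoms) lying below $w$, and both invoke Theorem~\ref{longest-element-description} to see that this join exists and equals $w_\circ(J)$. The paper's proof is a terse one-liner, while you have carefully spelled out the second step (that any competing $w_\circ(J')$ with $w_\circ(J')\le w$ satisfies $J'\subseteq L(w)$ and hence $w_\circ(J')\le w_\circ(L(w))$), but the underlying argument is the same.
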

  
  \begin {proof}
  Simply note that $J$ must be exactly the simple reflections $s$ 
  corresponding to those atoms $a_s$ satisfying $a_s\le w$, since $w$ is an upper bound for the set of atoms below it. 
  \end{proof}
  
    In what follows, we will  also use the following result  that appears in \cite{Kas}, cf. also \cite[Section~1]{Li2}. This result also  follows  directly from the contrapositive of implication (\ref{firstkey}) described in Section~\ref{sectkey},
as explained in Remark~\ref{explain-contrapositive} below.

\begin{theorem}\label{all-reduced-words}
Consider any element $x$ with key $w = \kappa (x)$  in a crystal for  a symmetrizable Kac-Moody algebra. 
 Let $s_{i_1}\cdots s_{i_d}$ be any reduced expression for $w$.  Then there is a saturated chain proceeding downward from $x$ to $\hat{0}$ in the crystal as follows.    One first applies $e_{i_1}$ as many times as possible to $x$  while the result remains nonzero;
  then likewise one applies $e_{i_2}$ as many times as possible while staying nonzero, and one  continues in this manner from left to right through the reduced expression, thereby obtaining a saturated chain which begins at $x$ and proceeds downward to $\hat{0}$. 
\end{theorem}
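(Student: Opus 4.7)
The plan is to argue by induction on the length $d$ of the reduced expression $s_{i_1}\cdots s_{i_d}$ for $w=\kappa(x)$. The heart of the argument is a single technical claim controlling how the key changes as one descends along a single $p$-string; this claim combines axioms \eqref{changekey} and \eqref{firstkey} in a uniform way. Once that claim is established, the iteration is essentially automatic.

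The key lemma I would establish reads as follows. Fix $u$ in the crystal with $\kappa(u)=v$ and any color $p\in I$, and let $u=u^{(0)}\gtrdot u^{(1)}=e_p(u^{(0)})\gtrdot\cdots\gtrdot u^{(k)}$ be the maximal descending $p$-string from $u$ (so $e_p(u^{(k)})=\mathbf{0}$). Then when $s_pv<v$ in left weak Bruhat order we must have $k\geq 1$ and $\kappa(u^{(k)})=s_pv$, while when $s_pv>v$ we have $\kappa(u^{(j)})=v$ for every $j$. To prove this I would apply \eqref{changekey} to each edge $u^{(j)}=f_p(u^{(j+1)})$: for $j\leq k-2$ we have $e_p(u^{(j+1)})=u^{(j+2)}\neq\mathbf{0}$, forcing $\kappa(u^{(j)})=\kappa(u^{(j+1)})$, which iterated yields $v=\kappa(u^{(0)})=\cdots=\kappa(u^{(k-1)})$. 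At the final edge, $e_p(u^{(k)})=\mathbf{0}$ puts us in the second clause of \eqref{changekey}, so $\kappa(u^{(k)})\in\{v,s_pv\}$; meanwhile \eqref{firstkey} applied at $u^{(k)}$ gives $s_p\kappa(u^{(k)})>\kappa(u^{(k)})$. When $s_pv<v$, the option $\kappa(u^{(k)})=v$ would contradict $s_pv>v$, leaving only $\kappa(u^{(k)})=s_pv$; moreover $k=0$ would mean $e_p(u)=\mathbf{0}$ and hence $s_pv>v$, also a contradiction. The case $s_pv>v$ is handled symmetrically.

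With the lemma in hand the induction is straightforward. Since $s_{i_1}\cdots s_{i_d}$ is a reduced expression, $s_{i_1}$ is a left descent of $w$, i.e.\ $s_{i_1}w<w$ in left weak order. The lemma applied with $p=i_1$ and $u=x$ says that the first phase of the algorithm, namely applying $e_{i_1}$ as many times as possible, produces a saturated descending chain in the crystal poset from $x$ to some element $x'$ with $\kappa(x')=s_{i_1}w=s_{i_2}\cdots s_{i_d}$. Since $s_{i_2}\cdots s_{i_d}$ is itself a reduced expression of length $d-1$, the inductive hypothesis applied to $x'$ produces a saturated descending chain from $x'$ to $\hat{0}$ via the remaining phases of the algorithm; concatenation yields the desired saturated chain from $x$ to $\hat{0}$. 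The base case $d=0$ reduces to the statement $\kappa^{-1}(e)=\{\hat{0}\}$, which follows because $\kappa$ is a poset map to the left weak order by Remark~\ref{key-is-poset-map}, while every atom of the crystal poset already has key $s_i\neq e$ by axiom~(2) of Section~\ref{sec:keyalg}.

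The main obstacle is really just the careful bookkeeping inside the proof of the key lemma: one must correctly extract from the two possibilities in \eqref{changekey} at the bottom of a $p$-string the fact that the simple dichotomy ``$s_p$ is a left descent of $v$ or not'' controls exactly when and how the key drops. Once the lemma is established, both the correct progression of keys through the algorithm and the fact that each phase strictly descends by at least one cover relation follow automatically, so nothing else needs separate verification.
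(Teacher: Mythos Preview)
Your proof is correct and takes essentially the same approach as the paper's sketch in Remark~\ref{explain-contrapositive}: both arguments iterate through the reduced expression, using the contrapositive of \eqref{firstkey} to guarantee each $e_{i_j}$ applies at least once, and \eqref{changekey} to see that the key stays constant along a $p$-string except possibly at its bottom, where it drops by exactly $s_{i_j}$. Your key lemma simply packages these two observations into a clean dichotomy, and your induction on $d$ makes explicit the iteration the paper leaves implicit.
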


The above string of crystal operators $e_i$ mapping $x$ to $\hat{0}$ is known as an {\em adapted string} (associated to the fixed reduced expression for $w$). 
 
\begin{remark}\label{explain-contrapositive}{\rm 
 The contrapositive of (\ref{firstkey}), which holds in the level of generality of symmetrizable Kac-Moody algebras,  ensures that each $e_{i_j}$ in turn may  be applied at least once in the setting of Theorem ~\ref{all-reduced-words}  while still yielding nonzero elements of the crystal.  Only the final step in any such $i_j$-string proceeding downward can change the key, but the fact that $w$ has length $d$ ensures that each such step does change the key.  Thus, the key must be the identity permutation after the last application of $e_{i_d}$, ensuring that the lowest element in the saturated chain is  $\hat{0}$.  }
\end{remark}

 Lemma~\ref{weak-lower-bound} together with Theorem ~\ref{all-reduced-words}  allows us to deduce:
  
  \begin{corollary}\label{matching-corollary}
  For each crystal vertex $x$, there is a unique maximal element $z \le x$ among the minima of  the fibers $\kappa^{-1} (w_\circ (J))$ for the various choices of $J\subseteq I$.
  \end{corollary}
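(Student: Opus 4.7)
The plan is to identify the sought element explicitly and then verify both that it lies below $x$ and that it dominates every competing minimum-of-a-fiber sitting below $x$. Let $w := \kappa(x)$ and let $J_0\subseteq I$ be the subset with $w_\circ(J_0)$ the unique maximum longest-element-of-a-parabolic lying below $w$ in left weak Bruhat order, as furnished by Corollary~\ref{weak-lower-bound}. Since $w\in W^K$ for $W_K=W_\lambda$ the stabilizer of $\lambda$, the right descents of $w$ all lie in $K^c$, hence $J_0\subseteq K^c$, and Theorem~\ref{unique-min} produces a minimum $z$ of the fiber $\kappa^{-1}(w_\circ(J_0))$. The claim is that this $z$ is the desired element.

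To see $z\le x$, pick a reduced expression $w = s_{j_1}\cdots s_{j_r}\,t_1\cdots t_m$ whose tail $t_1\cdots t_m$ is a reduced expression for $w_\circ(J_0)$; this is available because $w_\circ(J_0)\le w$ in left weak order. Applying Theorem~\ref{all-reduced-words} to $x$ with this reduced expression, the successive maximal applications of $e_{j_1},\ldots,e_{j_r}$ produce a descending saturated chain in the crystal from $x$ to some $y$ whose key, tracked via \eqref{changekey} along the chain, equals $s_{j_r}\cdots s_{j_1}\,w = w_\circ(J_0)$. Thus $y\in\kappa^{-1}(w_\circ(J_0))$ and $y\le x$, so minimality of $z$ in the fiber forces $z\le y\le x$. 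For the maximality claim, suppose $z' = \min\kappa^{-1}(w_\circ(J'))$ satisfies $z'\le x$ for some $J'\subseteq K^c$. Because $\kappa$ is a poset map (Remark~\ref{key-is-poset-map}), $w_\circ(J')\le w$ in left weak order, and the uniqueness of maximum in Corollary~\ref{weak-lower-bound} then gives $w_\circ(J')\le w_\circ(J_0)$ in left weak order. Repeating the descent construction from $z$ using a reduced expression for $w_\circ(J_0)$ whose tail is a reduced expression for $w_\circ(J')$, Theorem~\ref{all-reduced-words} produces $y'\le z$ with $\kappa(y') = w_\circ(J')$, whence $z'\le y'\le z$ by minimality of $z'$ in its fiber.

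The main obstacle is ensuring that the adapted string from Theorem~\ref{all-reduced-words} passes through a crystal vertex whose key is exactly $w_\circ(J_0)$ (and similarly for $w_\circ(J')$) rather than some element strictly further below in weak order; this is controlled by the existence of a reduced expression with the prescribed suffix, together with the rule \eqref{changekey}, which guarantees that each maximal $i$-string descent either preserves the key or drops it by left-multiplication by $s_i$, with the length-drop equal to one per string forced by $\ell(w)$ matching the length of the reduced expression. Conceptually, the argument is a double application of ``lift a weak-order inequality between keys to a crystal-poset inequality via a controlled descent,'' with Theorem~\ref{unique-min} closing each lift at the minimum of the appropriate fiber.
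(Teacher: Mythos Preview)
Your proof is correct and follows essentially the same approach as the paper's: identify the unique maximal $w_\circ(J_0)\le\kappa(x)$ via Corollary~\ref{weak-lower-bound}, then use an adapted string (Theorem~\ref{all-reduced-words}) along a reduced expression with a prescribed suffix to descend from $x$ to an element of the fiber $\kappa^{-1}(w_\circ(J_0))$, and finally invoke Theorem~\ref{unique-min}. Your argument is in fact more complete than the paper's, since you explicitly verify $J_0\subseteq K^c$ and you spell out the maximality of $z$ among competing fiber-minima (repeating the descent construction from $z$ itself), both of which the paper leaves implicit.
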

  
  \begin{proof}
  First we calculate the key of $x$, namely $\kappa (x)\in W$.  
  Then we use Corollary~\ref{weak-lower-bound}
   to ensure the existence of a 
   unique maximal element  $w_\circ(J)$ satisfying $w_\circ (J) \le \kappa (x)$ in weak Bruhat order.   We will show that the minimal 
  element $z$ of the fiber $\kappa^{-1}(w_\circ (J))$  satisfies $z\le x$.  To this end, first we show that there exists an element $x'\in \kappa^{-1}(w_\circ (J))$ such that $x' \le x$, by virtue of having
  $w_\circ(J)\le \kappa (x)$ in weak Bruhat order, since it is known by Theorem~\ref{all-reduced-words} that we may obtain a saturated chain from  $\hat{0}$ to  $x$ by taking any reduced expression
  $s_{i_1}\cdots s_{i_d}$  for $\kappa (x)$ and first applying $e_{i_1}$ as many times sequentially  as possible to $x $ while staying nonzero, proceeding downward in the saturated chain, then likewise 
  applying $e_{i_2}$ as many times as possible, and continuing in this fashion through the reduced expression  from left to right;  we choose $s_{i_1}\cdots s_{i_d}$ so that a final 
  segment of it gives a reduced expression 
  $s_{i_r}\cdots s_{i_d}$  for $w_\circ(J)$, which means that just after applying the final copy of $e_{i_{r-1}}$ we will have reached a crystal  element $x'$  satisfying  $\kappa (x') = w_\circ(J)$, as desired.  In particular, this implies $x'\le x$.  But we proved for finite crystals in Theorem~\ref{unique-min}   that each  $w_\circ(J)$  has 
   a unique minimal element  $z$ with $\kappa (z) = w_\circ (J)$, implying  $z\le x' \le x$, giving the desired element as  $z$.
  \end{proof}

 
 \begin{theorem}\label{homotopy-proof}
 Every nonempty 
 open interval $(\hat{0},x)$   in a 
 crystal  
 $B(\lambda )$ corresponding to a symmetrizable Kac-Moody algebra 
 has order complex which is homotopy equivalent to a ball or a sphere.  Specifically, 
 $\Delta (\hat{0},x)\simeq 
 S^{|J|-2}$  if $x$ is the lowest 
    element in the fiber of the key map at $w_\circ(J)$ for some $J$ with 
 $|J|\ge 2$, while
 $\Delta (\hat{0},x)$ is contractible (or empty) otherwise.  
  \end{theorem}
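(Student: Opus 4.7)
The plan is to apply Quillen's Fiber Lemma to a retraction of $[\hat{0},x]$ whose image is naturally a Boolean lattice, and then read off the answer from the well-known topology of Boolean lattices. Concretely, using Corollary~\ref{matching-corollary}, I define $r\colon [\hat{0},x]\to [\hat{0},x]$ by letting $r(y)$ be the unique maximal element $z\le y$ among the fiber-minima $z_J:=\min \kappa^{-1}(w_\circ(J))$. The first task is to verify that $r$ is an order-preserving idempotent with $r(y)\le y$. Idempotence is immediate, and the crucial monotonicity step reduces to the implication $J\subseteq J'\Longrightarrow z_J\le z_{J'}$. That implication is proved via Theorem~\ref{all-reduced-words}: choose a reduced expression for $w_\circ(J')$ whose final segment is a reduced expression for $w_\circ(J)$, descend from $z_{J'}$ along the corresponding adapted string until reaching a crystal element of key $w_\circ(J)$, and then invoke the uniqueness part of Theorem~\ref{unique-min}. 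The standard interior-operator consequence of Theorem~\ref{Quillen-fiber-lemma} then gives $\Delta(\hat{0},x)\simeq\Delta(\mathrm{Im}(r)\cap (\hat{0},x))$.

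The next step is to identify $\mathrm{Im}(r)$ as a poset. The same monotonicity argument actually shows $z_J\le z_{J'}\Longleftrightarrow J\subseteq J'$, so $\mathrm{Im}(r)$ is isomorphic to the order ideal $\mathcal{J}(\kappa(x)):=\{J\subseteq I:w_\circ(J)\le \kappa(x)\}$ inside the Boolean lattice on $I$. The crucial observation, which drives the dichotomy in the theorem, is that $\mathcal{J}(\kappa(x))$ is in fact the full Boolean lattice on the set $D$ of atoms of weak Bruhat order lying below $\kappa(x)$: since $\kappa(x)$ is an upper bound for those atoms, Theorem~\ref{longest-element-description} guarantees both that $W_J$ is finite and that $w_\circ(J)\le \kappa(x)$ for every $J\subseteq D$.

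The stated topology then follows by a short case analysis on which endpoints of this Boolean lattice survive the restriction to the open interval $\mathrm{Im}(r)\cap(\hat{0},x)$. The bottom $\emptyset$, corresponding to $\hat{0}$, is always removed; the top $D$ is removed exactly when $x$ is itself a fiber-minimum, equivalently when $\kappa(x)=w_\circ(D)$ and $x=z_{D}$. In that case, setting $J:=D$, the remaining poset is the proper part of the Boolean lattice on $J$, whose order complex is the barycentric subdivision of $\partial\Delta^{|J|-1}\simeq S^{|J|-2}$ (with the empty complex when $|J|=1$, matching the $S^{-1}$ convention). Otherwise only $\emptyset$ is removed and the resulting poset has $D$ as its unique maximum, so its order complex is contractible.

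The main obstacle I anticipate is cleanly establishing the monotonicity implication $J\subseteq J'\Longrightarrow z_J\le z_{J'}$ underlying $r$, since that is the one place where the substantive content of Theorems~\ref{all-reduced-words} and \ref{unique-min} is genuinely needed. Once $r$ is in place, the identification of $\mathcal{J}(\kappa(x))$ with the full Boolean lattice on $D$ via Theorem~\ref{longest-element-description} is the conceptual heart of the argument, and the remaining topology is essentially formal.
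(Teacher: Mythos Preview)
Your proof is correct and follows essentially the same strategy as the paper: both define the same retraction via Corollary~\ref{matching-corollary}, apply the Quillen Fiber Lemma to reduce to the image, and identify that image with a (possibly once- or twice-truncated) Boolean lattice on the atoms below $\kappa(x)$. If anything, your write-up is more careful than the paper's in justifying the Boolean structure, explicitly arguing $z_J\le z_{J'}\Longleftrightarrow J\subseteq J'$ via Theorems~\ref{all-reduced-words} and~\ref{unique-min} and invoking Theorem~\ref{longest-element-description} to ensure each relevant $W_J$ is finite.
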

 
 \begin{proof}
 
 Based on Corollary ~\ref{matching-corollary}, for each $y$ in the crystal, we let 
 $f(y)$ be the unique maximal element $z \le x$ among the minima of the fibers of the key map at the longest elements of parabolic subgroups $W_J$. 
   Thus,  $f$ gives a poset map to the subposet of minimal elements of fibers $\kappa^{-1}(w_\circ (J))$, for various subsets $J$ of the atoms of the crystal.  
 
  
  
Now the Quillen fiber lemma implies  that  
 $\Delta (\hat{0},x)$  is homotopy equivalent to the order complex of the subposet comprised of the minimal elements of the fibers of longest elements 
 $w_\circ (J)$  of parabolics for those $J$ which are subsets of the set of atoms below $x$, namely a Boolean algebra $B_{|J|}$ with $\hat{0}$ removed and possibly also with $\hat{1}$ removed.  More specifically, $\hat{1}$ is absent from this truncated Boolean algebra if and only if
 $x$ is itself the minimal element of a fiber of a longest element of a parabolic subgroup of $W$.    Finally, we use that the order complex of a Boolean algebra $B_d$  with both $\hat{0} $ and $\hat{1}$ removed  is homotopy equivalent to a sphere $S^{d-2}$, due to being the barycentric subdivision of the boundary of a $(d-1)$-simplex,  while the order complex for a Boolean algebra with $\hat{0}$ removed and with $\hat{1}$ present  is homotopy equivalent to a ball (due to having a cone point at $\hat{1}$).
 \end{proof}
 
 Recall that $\lfloor w \rfloor $ denotes the lowest representative of the coset $wW_{\lambda }=w W_K$ in our analysis below of the crystal $B(\lambda )$.

\begin{corollary}
Every  nonempty upper interval $(x,\hat{1})$ in a finite type crystal $B(\lambda )$ 
satisfies  $\Delta (x,\hat{1})\simeq 
 S^{|J|-2}$  if $x$ is 
 the highest element in the fiber of the left key map at $\floor{w_\circ^J}$ for some $J\subseteq K^c$ with $|J|\ge 2$, and it is contractible (or empty)  otherwise. 

\end{corollary}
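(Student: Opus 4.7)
The plan is to reduce the statement to Theorem~\ref{homotopy-proof} by invoking the Lusztig involution $S$ on $B(\lambda)$ recorded in Remark~\ref{labint}(2). Since $S$ is an order-reversing bijection on the crystal poset, it restricts to an order-reversing bijection of the open interval $(x,\hat{1})$ onto $(\hat{0},S(x))$. Because the order complex of a poset and its order dual coincide as abstract simplicial complexes, we get the identification
\[
\Delta(x,\hat{1}) \;=\; \Delta(\hat{0},S(x))
\]
as simplicial complexes, hence the two have the same homotopy type.

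Next I would apply Theorem~\ref{homotopy-proof} directly to the lower interval $(\hat{0},S(x))$: this gives $\Delta(\hat{0},S(x))\simeq S^{|J|-2}$ precisely when $S(x)$ is the minimal element of the fiber $\kappa^{-1}(w_\circ(J))$ for some $J\subseteq K^c$ with $|J|\ge 2$, and gives a contractible (or empty) complex otherwise. Note that $J\subseteq K^c$ enters automatically, since Theorem~\ref{unique-min} is exactly the condition under which the fibers at $w_\circ(J)$ are nonempty.

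The final step is to translate this characterization of $S(x)$ into the stated condition on $x$ itself. This uses Theorem~\ref{relkeys}, which tells us that $\overline{\kappa}(S(y))=\lfloor w_\circ\kappa(y)\rfloor$; applied with $y=S(x)$ and using $S^2=\mathrm{id}$, we obtain $\kappa(S(x))=w_\circ(J)$ if and only if $\overline{\kappa}(x)=\lfloor w_\circ w_\circ(J)\rfloor=\lfloor w_\circ^J\rfloor$. Moreover, since $S$ reverses the poset order, $S(x)$ is the minimum of $\kappa^{-1}(w_\circ(J))$ if and only if $x$ is the maximum of $\overline{\kappa}^{-1}(\lfloor w_\circ^J\rfloor)$; this is precisely the anti-isomorphism of fibers noted in Remark~\ref{fibers-lr-keys}. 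Combining the three steps yields the claimed dichotomy.

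The argument is essentially formal once one accepts the Lusztig involution and the compatibility of the two keys under $S$; the only real obstacle is careful bookkeeping of the cosets $W_K$ of the stabilizer of $\lambda$ and of the correspondence $w_\circ(J)\leftrightarrow\lfloor w_\circ^J\rfloor$ arising from left-multiplication by $w_\circ$. No new topological or combinatorial input beyond Theorem~\ref{homotopy-proof} and Theorem~\ref{relkeys} is required.
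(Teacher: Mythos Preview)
Your argument is correct and matches the paper's approach exactly: the paper's proof is a one-line appeal to the Lusztig involution (Remark~\ref{labint}(2)) together with Theorem~\ref{relkeys} and Remark~\ref{fibers-lr-keys}, and you have simply unpacked those references in detail. No deviation in strategy or content.
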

\begin{proof}
This immediately follows via the Lusztig involution (see Remark~\ref{labint}~(2)), based on Theorem~\ref{relkeys} and Remark~\ref{fibers-lr-keys}.
\end{proof}

 By the well-known relationship $\mu_P(u,v) = \tilde{\chi}(\Delta (u,v))$ appearing e.g. as Proposition 3.8.6 in \cite{St}, Theorem ~\ref{homotopy-proof}  immediately yields: 

  \begin{corollary}\label{Moebius-theorem}
  For  
   crystals given by symmetrizable Kac-Moody algebras, 
    $\mu (\hat{0},x) = (-1)^{|J|-2}$ for $x$ the lowest 
    element in the fiber of the key map at $w_\circ(J)$; otherwise   
    $\mu (\hat{0},x)=0$. Likewise, in a finite type crystal, $\mu (x,\hat{1}) = (-1)^{|J|-2}$ for $x$ the 
    highest element in the fiber of the left key map at $\floor{w_\circ^J}$ for some $J\subseteq K^c$; otherwise, $\mu (x,\hat{1})=0$. 
 \end{corollary}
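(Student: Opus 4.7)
The plan is to derive Corollary~\ref{Moebius-theorem} as an essentially automatic consequence of the preceding homotopy-theoretic results, by way of the standard identification of the M\"obius function with the reduced Euler characteristic of the order complex, $\mu_P(u,v) = \tilde{\chi}(\Delta_P(u,v))$ (Proposition~3.8.6 of \cite{St}). I would simply invoke this identification and read off the values from the homotopy types computed above.

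Concretely, for a nonempty open interval $(\hat{0},x)$ Theorem~\ref{homotopy-proof} distinguishes two cases: either $x$ is the minimum of a fiber $\kappa^{-1}(w_\circ(J))$ with $|J|\ge 2$ and $\Delta(\hat{0},x)\simeq S^{|J|-2}$, or $\Delta(\hat{0},x)$ is contractible. Since $\tilde{\chi}(S^{d})=(-1)^{d}$ and $\tilde{\chi}$ vanishes on contractible complexes, these two cases immediately translate into $\mu(\hat{0},x)=(-1)^{|J|-2}$ and $\mu(\hat{0},x)=0$ respectively. The only boundary case requiring separate comment is the atom case, where $(\hat{0},x)$ is empty; here $x$ covers $\hat{0}$ via an edge of some color $i$, so by the axioms for the key (Section~\ref{sec:keyalg}) we have $\kappa(x)=s_i=w_\circ(\{i\})$ and $x$ is the unique, hence minimum, element of $\kappa^{-1}(s_i)$, so $|J|=1$. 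Direct computation gives $\mu(\hat{0},x)=-1$, matching $(-1)^{|J|-2}=(-1)^{-1}=-1$; this is also consistent with the convention $S^{-1}=\emptyset$ and $\tilde{\chi}(\emptyset)=-1$, so this case may equivalently be absorbed into the general formula.

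For the upper-interval statement in finite type, I would apply the same reduced Euler characteristic argument to the corollary on $\Delta(x,\hat{1})$ that immediately precedes Corollary~\ref{Moebius-theorem} (which in turn is obtained from Theorem~\ref{homotopy-proof} via the Lusztig involution, using Theorem~\ref{relkeys} and Remark~\ref{fibers-lr-keys} to translate between the right key and the left key, and between minima and maxima of fibers). The $|J|=1$ boundary case is handled identically. There is essentially no obstacle, since all real content has already been established in Theorem~\ref{homotopy-proof}; the only care needed is bookkeeping of the $|J|\le 1$ edge cases and the sign conventions for $\tilde{\chi}$ of the empty complex versus $S^{-1}$.
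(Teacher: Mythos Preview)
Your proposal is correct and matches the paper's own approach exactly: the paper simply states that Corollary~\ref{Moebius-theorem} follows immediately from Theorem~\ref{homotopy-proof} via the identity $\mu_P(u,v)=\tilde{\chi}(\Delta_P(u,v))$, with no further argument. Your write-up is in fact more careful than the paper's, spelling out the $|J|=1$ boundary case; one small quibble is that an atom $x$ need not be the \emph{unique} element of $\kappa^{-1}(s_i)$ (e.g.\ $f_i^2(\hat{0})$ can also have key $s_i$), but it is indeed the minimum of that fiber, which is all you need.
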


 \begin{remark}
{\rm 
Our approach to crystals above 
 is closely related to the viewpoint taken in \cite{BB} for the weak Bruhat  order, with our 
 poset map for crystals used in conjunction with the Quillen fiber lemma above specializing  
   to a poset map given in \cite{BB} for weak Bruhat order, and with 
 our usage of the Quillen fiber lemma accomplishing the same thing as the usage of properties of 
 (dual) closure maps in \cite{BB}. 
}
\end{remark}

 \begin{remark}{\rm 
 In the case of  non-finite type,  $\Delta (\hat{0},u) $ is nearly always contractible, implying that  
 $\mu (\hat{0},u)$ is nearly always $0$.   The point is that a set $J$ 
 of atoms will not have any upper bound unless $W_J$ is finite, meaning that very few elements in a non-finite type crystal will have key that is the longest element of a parabolic with the further property of minimality amongst elements with this key.
}
\end{remark}


We also  have the following corollary of Theorem~\ref{homotopy-proof}.

\begin{corollary}\label{crystal-type}
Given a rank $r\ge 2$ simple Lie algebra $\mathfrak{g}$ of finite type, the order complex of the proper part of the  crystal poset $B(\lambda)$ is homotopy equivalent to the sphere $S^{r-2}$ if $\lambda=\rho$, and to a ball otherwise.  
\end{corollary}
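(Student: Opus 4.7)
The approach is to apply Theorem~\ref{homotopy-proof} to $x = \hat{1}$: it reduces the corollary to showing that $\hat{1}$ is the unique minimum of some fiber $\kappa^{-1}(w_\circ(J))$ with $|J|\ge 2$ precisely when $\lambda = \rho$, in which case the only candidate is $J = I$, giving $\Delta(\hat{0}, \hat{1}) \simeq S^{r-2}$; otherwise the order complex must be contractible.

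First I would dispose of the non-regular case. If $\lambda$ has stabilizer $W_K$ with $\emptyset \ne K \subsetneq I$, then $\kappa(\hat{1}) = \lfloor w_\circ\rfloor = w_\circ w_\circ(K)$. This element sends $\Phi^+\setminus \Phi_K^+$ to the negative side while stabilizing $\Phi_K^+\subset \Phi^+$; by contrast $w_\circ(J)$ sends exactly $\Phi_J^+$ negative. An equality $w_\circ w_\circ(K) = w_\circ(J)$ would then force $\Phi^+ = \Phi_K^+\sqcup \Phi_J^+$ as a disjoint union. But connectedness of the Dynkin diagram of a simple $\mathfrak{g}$ produces adjacent simple roots $\alpha_i \in K$ and $\alpha_j \in J$, making $\alpha_i + \alpha_j$ a positive root lying in neither subsystem --- a contradiction. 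Hence $\kappa(\hat{1})$ is not of the form $w_\circ(J)$, so Theorem~\ref{homotopy-proof} yields contractibility.

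Next I would treat the regular case, where $\kappa(\hat{1}) = w_\circ(I)$ and the question becomes whether $\hat{1}$ equals the unique minimum $F_*$ of $\kappa^{-1}(w_\circ)$ provided by Theorem~\ref{unique-min}. Following the alcove-model construction in that proof, $F_* = \{k+1,\ldots,l\}$ corresponds to folding every position between $A_k = A_\circ - \lambda'$ and $A_l = w_\circ(A_\circ) - \lambda'$, where $\lambda' := \lambda - \rho$. All of these folding hyperplanes pass through $-\lambda'$, so the reflections $\widehat{r}_{k+1},\ldots,\widehat{r}_l$ share this fixed point, and their composition is the affine map $\nu \mapsto w_\circ(\nu+\lambda') - \lambda'$ (the linear part $r_{k+1}\cdots r_l = w_\circ$ being recorded by the saturated chain realizing $\kappa(F_*)=w_\circ$). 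Consequently
\[
\mu(F_*) = -\widehat{r}_{k+1}\cdots\widehat{r}_l(-\lambda) = w_\circ\rho + \lambda',
\]
whereas $\hat{1}$ carries the extremal weight $w_\circ\lambda = w_\circ\rho + w_\circ\lambda'$. Since this extremal weight space is one-dimensional, $F_*=\hat{1}$ iff $\lambda' = w_\circ\lambda'$. Because $w_\circ$ acts on fundamental weights by $\omega_i \mapsto -\omega_{\sigma(i)}$ for a permutation $\sigma$ of $I$, the only $w_\circ$-fixed dominant weight is $0$, so the equality forces $\lambda' = 0$, i.e.\ $\lambda = \rho$.

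Combining the two cases yields the corollary. The main technical obstacle is justifying the affine-reflection weight formula $\mu(F_*) = w_\circ\rho + \lambda'$ from the explicit construction of $F_*$ inside Theorem~\ref{unique-min}; once that formula is established, both conclusions follow by the one-dimensionality of the extremal weight space combined with the uniformly verifiable observation about $w_\circ$-fixed dominant weights.
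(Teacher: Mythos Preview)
Your argument is correct and, in the regular case, takes a genuinely different route from the paper.  Both proofs begin the same way, applying Theorem~\ref{homotopy-proof} at $x=\hat{1}$ and disposing of the non-regular case by showing that an equality $w_\circ w_\circ(K)=w_\circ(J)$ would force $\Phi^+=\Phi_K^+\sqcup\Phi_J^+$, which irreducibility forbids; your phrasing via an adjacent pair $\alpha_i\in K$, $\alpha_j\in J$ producing the root $\alpha_i+\alpha_j$ is the same idea as the paper's orthogonality argument.

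Where the two diverge is in the regular case.  The paper passes through duality: via the Lusztig involution and the left key, the condition ``$\hat{1}$ is minimal in $\kappa^{-1}(w_\circ)$'' becomes ``$\hat{0}$ is maximal in $\overline{\kappa}^{-1}(1)$'', and one then checks directly in a special $\lambda$-chain that $\overline{\kappa}(f_i(\hat{0}))=1$ whenever $\langle\lambda,\alpha_i^\vee\rangle\ge 2$, while $\overline{\kappa}(f_i(\hat{0}))=s_i$ for $\lambda=\rho$.  You instead stay entirely on the right-key side and compute the weight of the explicit minimum $F_*=\{k+1,\ldots,l\}$ built in the proof of Theorem~\ref{unique-min}: since all the folding hyperplanes pass through $-\lambda'$, the affine composition has linear part $w_\circ$ and fixed point $-\lambda'$, giving $\mu(F_*)=w_\circ\rho+\lambda'$.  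Comparing with $\mu(\hat{1})=w_\circ\lambda=w_\circ\rho+w_\circ\lambda'$ and using that the extremal weight space is one-dimensional yields $F_*=\hat{1}$ iff $\lambda'=w_\circ\lambda'$, hence iff $\lambda'=0$.  This is a clean alternative that avoids the left-key machinery altogether; the trade-off is that it leans on the specific alcove-model construction inside the proof of Theorem~\ref{unique-min}, whereas the paper's argument is phrased more intrinsically in terms of keys.
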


\begin{proof}
By Theorem~\ref{homotopy-proof}, the order complex of $B(\lambda)\setminus \{ \hat{0},\hat{1} \} $ is homotopy equivalent to a ball or a sphere, and we have a precise condition for when the latter happens, which we now assume. It is well known (see, e.g., \cite[Proposition~5.1]{lenccg}) that $\kappa(\hat{1})$ is the lowest  representative $w_\circ^K=w_\circ(I) w_\circ(K)$ of the coset $w_\circ W_\lambda$. We can assume that $K^c\ne\emptyset$, as $K=I$ would imply $\lambda=0$. We necessarily have $w_\circ^K=w_\circ(J)$ for some $J\subseteq K^c$. We claim that this implies $K=\emptyset$, so $\lambda$ is regular. Indeed, assuming the contrary and using the fact that $\ell(w_\circ(J))=|\Phi_J^+|$,  we derive the following contradiction from $w_\circ= w_\circ(J)w_\circ(K)$:
\[|\Phi^+|\le|\Phi_J^+|+|\Phi_K^+|\le |\Phi_K^+|+|\Phi_{K^c}^+|<|\Phi^+|\,.\]
Here the last strict inequality is justified as follows. The corresponding weak inequality is clear since $\Phi_K^+$ and $\Phi_{K^c}^+$ are disjoint, so assume that they partition $\Phi^+$. It follows that each $s_\alpha$ for $\alpha$ in $K$ (resp. $K^c$) permutes $\Phi_{K^c}$ (resp. $\Phi_K$), so $\alpha$ is orthogonal to all the roots of $\Phi_{K^c}$ (resp. $\Phi_K$). But this is impossible since $\Phi$ is an irrreducible root system. 

Under the assumption at the beginning, we proved that $\langle\lambda,\alpha_i^\vee\rangle\ge 1$ for each simple root $\alpha_i$. It remains to prove that in each case we have equality, which means that $\lambda=\rho$. Indeed, assume that $\langle\lambda,\alpha_i^\vee\rangle\ge 2$ for some $\alpha_i$. From Theorem~\ref{homotopy-proof} we know that $\hat{0}$ is the highest element in the fiber of the left key map at the identity in $W$. We claim that $\overline{\kappa}(f_i(\hat{0}))=1$, which contradicts the previous fact. Indeed, consider one of the special $\lambda$-chains in Definition~\ref{defkeys} (see the related notation), which contains the root $\alpha_i$ in positions $\overline{\jmath}<j_1<\ldots<j_s$, with $s=\langle\lambda,\alpha_i^\vee\rangle-1\ge 1$. By the definition of the action of $f_i$ in \eqref{eqn:rootF}, we have $f_i(\hat{0})=f_i(\emptyset)=\{j_s\}$, which makes the claim obvious once we recall Definition~\ref{defkeys}.  

Conversely, the order complex of $B(\rho)\setminus \{ \hat{0},\hat{1} \} $ is homotopy equivalent to a sphere since
\[\overline{\kappa}(\hat{0})=1=w_\circ^I\,,\;\;\;\;\overline{\kappa}(f_i(\hat{0}))=s_i>1\;\;\mbox{for all $i\in I$}\,.\]
The second property is proved in the same way as the similar one above.
\end{proof}

Let us comment on the meaning of Corollary~\ref{crystal-type}. It shows that the crystals $B(\rho)$ are special, in the sense that they behave, up to homotopy, in the same way as the weak Bruhat order on the corresponding Weyl group. This is not the case for $B(\lambda)$ for regular $\lambda\ne\rho$, which can be thought of as ``fattened'' versions of $B(\rho)$. It is also not the case for $B(\lambda)$ for non-regular $\lambda$, which can be thought of as degenerate versions of $B(\lambda)$ for regular $\lambda$.

 \section{Negative results for arbitrary intervals in  (type $A$) crystal posets}\label{negative-section}
 
 This section constructs explicit examples in type $A$ to exhibit intervals with arbitrarily large 
 M\"obius function, as well as open intervals of arbitrarily large rank that are disconnected.  In both cases, we use the SSYT model to describe our constructions.

The starting point for obtaining  all of  our 
counterexamples is the interval shown in Figure~\ref{badint}, in the type $A$ crystal poset corresponding to the partition $(4,3)$. Note that the M\"obius function of this interval is $2$.  We will construct two different  infinite families of examples, each  of  which has this example as its based case.  
 
\begin{figure}[h]
    \centering
\[\begin{diagram}
\node[4]{\tableau{1&1&2&3\\3&4&4}}\\
\node[2]{\tableau{1&1&2&3\\3&3&4}}\arrow{nee,t}{3}
\node[2]{\tableau{1&1&2&2\\3&4&4}}\arrow{n,r}{2}
\node[2]{\tableau{1&1&1&3\\3&4&4}}\arrow{nww,t}{1}\\
\node[2]{\tableau{1&1&2&3\\2&3&4}}\arrow{n,l}{2}
\node[1]{\tableau{1&1&1&2\\3&4&4}}\arrow{ne,t}{1}
\node[2]{\tableau{1&1&2&2\\3&3&4}}\arrow{nw,t}{3}
\node[1]{\tableau{1&1&1&3\\2&4&4}}\arrow{n,r}{2}\\
\node[2]{\tableau{1&1&2&2\\2&3&4}}\arrow{n,l}{2}
\node[2]{\tableau{1&1&1&2\\3&3&4}}\arrow{nw,b}{3}\arrow{ne,b}{1}
\node[2]{\tableau{1&1&1&2\\2&4&4}}\arrow{n,r}{2}\\
\node[4]{\tableau{1&1&1&2\\2&3&4}}\arrow{nww,b}{1}\arrow{n,r}{2}\arrow{nee,r}{3}
\end{diagram}\]
\caption{}
\label{badint}
\end{figure}

 \begin{theorem}\label{maxchains}
 No finite set of moves suffices to connect  the sets of maximal chains in all closed  intervals
 $[u,v]$  of  type $A$ crystal posets.  In particular, there are disconnected
 open intervals $(u,v)$ of arbitrarily large rank.  
 \end{theorem}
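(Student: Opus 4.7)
The plan is to produce, for every $N\ge 1$, an interval $[u_N,v_N]$ in some type $A$ crystal poset whose rank grows without bound in $N$ and whose open interval $(u_N,v_N)$ remains disconnected. Once such a family exists, the ``in particular'' clause is immediate. The main statement also follows: by definition any move on a saturated chain fixes the endpoints of the modified subchain and only alters intermediate vertices, so no move can transport a chain passing through one connected component of $(u_N,v_N)$ to a chain passing through a different component. Consequently any candidate finite set of moves, having some uniform bound $k$ on the length of its subchains, must fail on $[u_N,v_N]$ once $N$ is large enough that the rank of $[u_N,v_N]$ exceeds $k$ and chains through different components share only the two endpoints.

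The base case is the interval $[u,v]$ of Figure~\ref{badint}, and my first step is to verify directly from its cover relations that the open interval $(u,v)$ decomposes as the disjoint union of three connected components, corresponding to the three atoms above $u$: two of them are chains of length~$2$ and the middle one is a ``bowtie'' on four vertices, in which a common lower element and a common upper element are joined by two internally disjoint length-$1$ edges. Each component is contractible, so $\widetilde\chi((u,v))=2$, which matches the value $\mu(u,v)=2$ recorded in the text.

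For the inductive family I would follow the text's indication that Figure~\ref{badint} is the base case of two different families. The first family enlarges the alphabet: keep the two-row shape $\lambda=(4,3)$ but work in $B(\lambda)$ over $[n]$ with $n$ growing, and obtain $u_N,v_N$ from $u,v$ by replacing selected entries with larger labels so as to stretch chosen $i$-strings and produce intervals of linearly growing rank. The second family instead enlarges the shape, to $\lambda_N=(4N,3N)$ or a similar two-row partition, and places $N$ non-interacting copies of the base pattern in disjoint column ranges of the tableau, again producing intervals whose rank grows with~$N$.

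The hard part, and the real content of the argument, is to check that neither enlargement accidentally fuses the components of the base open interval. Because Definition~\ref{definition:crystal_operators} only compares consecutive letters $i$ and $i+1$, the $i$-signature $\rho_i$ at the columns carrying the critical structure should be insulated from the inserted data, so that no $f_i$ creates a new cover relation between vertices in different components of the extended open interval. Verifying this reduces to a finite case analysis of how $\rho_i(T)$ is affected by the additional letters or additional columns, carried out one color at a time; once this is done, the family $[u_N,v_N]$ supplies intervals of arbitrarily large rank with disconnected open interval, which is exactly what Theorem~\ref{maxchains} requires.
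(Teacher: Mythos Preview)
Your overall strategy is right and your analysis of the base interval from Figure~\ref{badint} is correct: the open interval really does split into three contractible pieces.  The gap is in the inductive step, where neither of your two proposed families is actually shown to work, and your more concrete proposal in fact fails.

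For your second family (placing $N$ non-interacting copies of the base pattern in disjoint column ranges, necessarily with disjoint color sets to avoid interaction of the signatures), the resulting interval is poset-isomorphic to an $N$-fold Cartesian product $[u,v]^N$.  But the proper part of a product of bounded posets is always connected once each factor has at least two elements: given any $(p,q)\in\overline{P\times Q}$, one reaches $(\hat 1_P,\hat 0_Q)$ by the comparabilities $(p,q)\le(\hat 1_P,q)\ge(\hat 1_P,\hat 0_Q)$ when $q\ne\hat 1_Q$, and by $(p,\hat 1_Q)\ge(p,\hat 0_Q)\le(\hat 1_P,\hat 0_Q)$ when $q=\hat 1_Q$.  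So this construction, which is essentially the one used for Theorem~\ref{big-Moebius}, cannot produce disconnected open intervals.  Your first family (``stretch chosen $i$-strings'' while keeping the shape $(4,3)$) is not specified precisely enough to evaluate, and stretching a single string typically does not preserve disconnectedness either.

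The paper does something genuinely different: for each $n\ge 3$ it passes to the shape $(n+1,n)$ with alphabet $[n+1]$ and writes down explicit tableaux $u,v$ together with two specific saturated chains (with weakly increasing and weakly decreasing label sequences).  It then shows these chains lie in different components of $(u,v)$ by a careful argument with precedence relations $i_j\prec i'_{j'}$ among the entries being incremented, deriving a contradiction from the signature rule.  This is not a product construction and does not reduce to the base case; the combinatorics has to be redone for each $n$, and that argument is the real content of the proof.
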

 
 \begin{proof}
We proceed by constructing a series of examples of disconnected open intervals $(u,v)$ of arbitrarily large rank within type $A$ crystal posets. More precisely, for each rank $n\ge 3$, we consider the two-row partition $(n+1,n)$, and the interval between the two SSYT below, cf.~Figure~\ref{badint}:
\cellsize=5.3ex
\[u:=\tableau{1&1&1&2&{\cdots}&{n\!-\!2}&{n\!-\!1}\\2&3&4&5&{\cdots}&{n\!+\!1}}\,,\;\;\;
v:=\tableau{1&1&2&{\cdots}&{n\!-\!2}&{n\!-\!1}&{n}\\3&4&5&{\cdots}&{n\!+\!1}&{n\!+\!1}}\,.\]

Note that $v$ is obtained from $u$ by incrementing by $1$ the entries $1,2,\ldots,n-1$ in row $1$ (we refer to the rightmost entry $1$), as well as the entries $2,3,\ldots,n$ in row $2$. Also note that there is a label increasing and a label decreasing chain from $u$ to $v$, with the following sequences of labels:
\[(1,2,2,3,3,\ldots,n-1,n-1,n)\,,\;\;\;(n,n-1,n-1,\ldots,3,3,2,2,1)\,.\]
Indeed, $f_1$ on $u$ increments the rightmost entry $1$ in row $1$, $f_2^2$ increments the entries $2$ in rows $1$ and $2$ of $u$, $\ldots$, $f_{n-1}^2$ increments the entries $n-1$ in rows $1$ and $2$ of $u$, and finally $f_n$ increments the entry $n$ in row $2$ of $u$; in each case, there is a single $-+$ pair (see Section~\ref{ssyt}). Similarly for the weakly decreasing chain.

 To prove for each $n\ge 3$ that $(u,v)$ is disconnected, 
  we show that the connected component containing the saturated chain with labels $(1,2,2,3,3,\ldots,n-1,n-1,n)$ only contains saturated chains starting with the label 1 and ending with the label $n$.  In particular, this connected component  does not include the saturated chain with labels $(n,n-1,n-1,\ldots,3,3,2,2,1)$. It suffices to prove  two things:
  (1) that there do not exist saturated chains starting with a label $k\ne 1$ and ending with the label $n$; in fact, we 
  can also assume $k\ne n$, cf. Remark~\ref{labint}~(1)), in this part; and (2) that  there do not exist saturated  chains starting with the label $1$ and ending with a label $k\not\in\{1,n\}$.
	
	 Let us now turn to the first case, carrying this out in detail for $k=2$ and explaining how this can be modified to handle any $k$ with $1<k < n$, since we already ruled out $k=n$.
	 It will suffice in this $k=2$  case  to prove  
	that the following two elements, denoted $u',v'$, are incomparable: 
\[\tableau{1&1&1&2&{\cdots}&{n\!-\!4}&{n\!-\!3}&{n\!-\!2}&{n\!-\!1}\\3&3&4&5&{\cdots}&{n\!-\!1}&{n}&{n\!+\!1}}\,,\;\;\;
\tableau{1&1&2&{\cdots}&{n\!-\!3}&{n\!-\!2}&{n\!-\!1}&{n}\\3&4&5&{\cdots}&{n}&{n}&{n\!+\!1}}\,.\]
\cellsize=2.5ex
To generalize this  case to larger $k$ satisfying $2<k<n$, the  point will be to replace $u'$ by a tableau obtained from $u$ by applying the operator $f_k$ in place of $f_2$, which was applied to $u$ to obtain $u' = f_2(u)$ above.  Then one may easily check that the reasoning below will still apply to show that $f_k(u)$ is also incomparable to $v'$ above.  

Throughout the following argument, we denote by $i_j$ the rightmost entry $i$ in row $j$ of $u'$. 
Suppose by way of  contradiction the existence of a saturated chain from $u'$ to $v'$, which means that the entries $1_1,2_1,\ldots,(n-1)_1$ and $3_2,4_2,\ldots,(n-1)_2$ are incremented by $1$, in some order. We write $i_j\prec i'_{j'}$ if $i_j$ is incremented before $i'_{j'}$. 

We first observe that we have
\begin{equation}\label{precrel}
(n-2)_1\prec (n-1)_1\,,\;\;\mbox{and}\;\;(\mbox{$(i-1)_1\prec i_1$ or $(i+1)_2\prec i_1$, for $i=3,4,\ldots,n-2$})\,.
\end{equation}
Indeed, if neither $(i-1)_1$ nor $(i+1)_2$ were incremented when $i_1$ is considered, then $(i+1)_2$ is paired with $i_1$ at this moment, so the latter cannot be incremented, as it is supposed to be. 
Now consider \eqref{precrel} for $i=n-2$.  Suppose that $(n-1)_2\prec(n-2)_1$, which implies, based on the first precedence in \eqref{precrel}, that $(n-1)_2\prec (n-2)_1\prec (n-1)_1$; but then, when $(n-1)_1$ is considered, this entry and the incremented $(n-2)_1$ to its immediate left are paired with the incremented $(n-1)_2$ and the not-to-be-changed $n_2$, respectively, which makes it impossible for $(n-1)_1$ to be incremented.  This gives a contradiction to our assumption that
$(n-1)_2\prec (n-2)_1$. 

 Recalling \eqref{precrel} for $i=n-2$, this implies that 
\begin{equation}\label{precrel1}
(n-3)_1\prec (n-2)_1\prec (n-1)_2\,.
\end{equation}
Now consider \eqref{precrel} for $i=n-3$ and assume that $(n-2)_2\prec(n-3)_1$, which implies, based on \eqref{precrel1}, that $(n-2)_2\prec(n-3)_1\prec(n-2)_1\prec (n-1)_2$; but then, when $(n-2)_1$ is considered, this entry and the incremented $(n-3)_1$ to its left are paired with the incremented $(n-2)_2$ and the not-yet-incremented $(n-1)_2$, respectively, which makes it impossible for $(n-2)_1$ to be incremented. Recalling \eqref{precrel} for $i=n-3$, this implies that 
\begin{equation*}
(n-4)_1\prec (n-3)_1\prec (n-2)_2\,.
\end{equation*}
Now consider \eqref{precrel} for $i=n-4$, repeating the same argument, { then continue in this manner for $i=n-j$ for each successively larger $j$ in turn}.  
Continuing in this way, we eventually deduce that $2_1\prec 3_2$. This means that, when $2_1$ is considered, it is paired with $3_2$ or the $3$ to its left, so it cannot be incremented. This concludes the proof by contradiction in the first case.

Now let us indicate how the second case  follows by a similar chain of implications.  Let $u' = f_1(u)$ and $v'=e_k(v)$, for any $k$ satisfying $1<k<n$.  We will show that there is no saturated chain from $u$ to $v$ which includes both $u'$ and $v'$, in the case $k=n-1$, by an argument that will work equally well for each $1<k<n$.  This will involve deducing a series of relations $i_j \prec k_l$ similar to the first case.   Now let us provide the series of such relations, 
generally leaving it to the reader to justify the corresponding implications.   By the definition of $u'$, we have $1_1 \prec  2_2$, which then  implies $2_1 \prec 2_2$.  But this implies $2_2 \prec 3_2$ (hence by transitivity also implies $2_1 \prec 3_2$).  But then  $2_1\prec 3_2$  implies  $3_1\prec 3_2$ and $3_2\prec 4_2$ (which by transitivity also  yields  $3_1\prec 4_2$). Continuing in this fashion, we get $i_1\prec i_2$ for each $i$ satisfying $1 < i < n$.  But this contradicts the assumption that $v' = e_{n-1}(v)$,  since  applying  $e_{n-1}$ to $v$ will decrement the value $n$ from the first row.  This completes the proof.
 \end{proof}
 
\begin{remark}{\rm Using the setup and the method in the proof of Theorem~\ref{maxchains}, one can give an explicit description of all the saturated  chains from $u$ to $v$ in the connected component of the one labeled $(1,2,2,3,3,\ldots,n-1,n-1,n)$. Beside the fact that the first label in such a chain is $1$ and the last one is $n$ (as shown above), this description is best given in terms of the precedence relations on the pairs $i_j$, with $i=2,\ldots,n-1$ and $j=1,2$, specifying an entry $i$ in row $j$ of $u$ to be incremented. The poset formed by the necessary and sufficient precedence relations is the Cartesian product of the chain $2\prec 3\prec\ldots\prec n-1$ corresponding to the values of $i$ and the chain $1\prec 2$ corresponding to the values of $j$. The saturated chains from $u$ to $v$ are in bijection with the linear extensions of this poset, so they are counted by the Catalan number $C_{n-2}$. 
}
\end{remark}

 \begin{theorem}\label{big-Moebius}
 There are type $A$ crystal poset intervals
  with arbitrarily large M\"obius function. 
 \end{theorem}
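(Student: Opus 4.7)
The plan is to construct an explicit infinite family of intervals $[u_k, v_k]$ in type $A$ crystal posets such that $|\mu(u_k, v_k)|\to \infty$, using the base interval of Figure~\ref{badint} (which has M\"obius function $2$) as a building block. The natural route is to assemble $k$ independent shifted copies of that interval inside a single SSYT, so that the resulting interval decomposes as a Cartesian product and the multiplicativity of $\mu$ over products of intervals yields $\mu(u_k,v_k)=\mu(u,v)^k=2^k$.

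Concretely, I would work in type $A_{4k-1}$ and use the $k$ disjoint letter ranges $\{4i-3,4i-2,4i-1,4i\}$ for $i=1,\ldots,k$. For each $i$, the $i$-th copy of the base pair $(u,v)$ from Figure~\ref{badint} is obtained by adding $4(i-1)$ to each entry. One then chooses a shape $\lambda_k$ (for instance $(4k,3k)$, or a taller shape that places the blocks in well-separated pairs of rows) such that the horizontal or vertical concatenation of these $k$ shifted copies gives a valid SSYT in $B(\lambda_k)$; call the result $u_k$ and $v_k$. The SSYT signature rule of Section~\ref{ssyt} shows immediately that each interior operator $f_i$ with $i\in\{4j-3,4j-2,4j-1\}$ acts only on letters within the $j$-th block, reproducing (under the shift) the action of $f_{i-4(j-1)}$ on the standalone base interval; this would furnish the product poset structure on the chains of $f_i$-arrows whose colors avoid $4j$.

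The main analytical step is then to establish the poset isomorphism $[u_k, v_k] \cong [u, v]^k$. This reduces to two claims: first, that no element of the interval admits an interior operator that exits the interval (so the product of the $k$ base intervals does sit inside $[u_k,v_k]$), and second, that the bridging operators $f_{4j}$ act trivially on every element of $[u_k,v_k]$ (so nothing extra is added). The first claim should follow from the fact that in the base interval the atoms of $u$ and coatoms of $v$ already saturate the $1$-, $2$-, and $3$-strings within the interval. The second is the genuinely delicate point: one must verify, by tracking the column reading word, that the rightmost $+$ coming from a $4j$-letter in block $j$ is always cancelled by a $-$ coming from a $(4j+1)$-letter in block $j+1$, for every element of the interval.

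This block-independence verification is the main obstacle, and it drives the choice of shape $\lambda_k$. If concatenation in shape $(4k,3k)$ produces uncancelled boundary symbols in the $4j$-signature at some element of the interval, the natural remedy is to separate the blocks, either by inserting a thin ``padding'' column (or row) forced to contain a specific letter that absorbs the boundary contribution, or by placing the $k$ blocks in distinct pairs of rows of a taller partition so that their reading-word contributions for each colour $4j$ consist entirely of balanced $-+$ patterns. Once block independence is in place, the poset isomorphism $[u_k,v_k]\cong [u,v]^k$ and the multiplicativity $\mu(u_k,v_k)=2^k$ follow immediately, proving that the M\"obius function is unbounded.
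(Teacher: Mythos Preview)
Your overall plan---realize $[u,v]^k$ as a single crystal interval and invoke the multiplicativity of the M\"obius function over products (Proposition~3.8.2 in \cite{St})---is exactly what the paper does, and your identification of ``bridging operators between adjacent blocks'' as the one genuine obstacle is on target.  Where you diverge from the paper is in how you separate the blocks, and this is precisely where your argument stalls.

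You shift the alphabet by $4$ between consecutive blocks, so block $j$ uses letters $\{4j-3,\ldots,4j\}$ and block $j{+}1$ uses $\{4j+1,\ldots,4j+4\}$; the operator $f_{4j}$ then legitimately connects the two, and you leave open whether its effect can be neutralized by clever choice of shape or padding.  The paper sidesteps this entirely by shifting by a \emph{large} constant (e.g.\ $100$): block $1$ uses letters $1$--$4$, block $2$ uses $101$--$104$, and so on.  Now no crystal color touches two blocks, and any operator outside the sets $\{1,2,3\},\{101,102,103\},\ldots$ creates a letter absent from $T_2^{(r)}$ and therefore exits the interval immediately by weight considerations.  The ``delicate'' cancellation you anticipate simply never arises.

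For the shape, the paper does what you gesture at under ``placing the $k$ blocks in distinct pairs of rows'': the $r$ copies of the base shape $(4,3)$ are laid out diagonally, the $j$th copy in rows $2j-1,2j$, so that consecutive copies touch only at a corner; the ambient shape is the smallest partition containing this skew shape, and the cells outside the blocks are filled with their row index.  This forced fill is semistandard, is identical in $T_1^{(r)}$ and $T_2^{(r)}$, and in each relevant signature contributes only cancelling $-+$ pairs column by column (the $-$ from the lower forced row, the $+$ from the upper one).  Hence within the interval each block operator $f_i$ acts exactly as it does in the standalone base interval, and the product isomorphism $[T_1^{(r)},T_2^{(r)}]\cong [T_1,T_2]^r$ follows.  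The single missing idea in your writeup is to make the alphabet shift large rather than minimal; once you do that, the rest of your sketch goes through along the lines you indicate.
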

 
 \begin{proof}
 The plan is to take the interval $(u,v)$ in~Figure~\ref{badint} with $\mu (u,v) = 2$,  where the SSYT corresponding to $u,v$ are denoted $T_1$ and $T_2$,  and then construct new SSYT $T_1^{(r)}$ and $T_2^{(r)}$ so that 
 $\mu (T_1^{(r)}, T_2^{(r)}) =  2^r$.   To this end, it will suffice (by Proposition 3.8.2 in \cite{St})  to show that 
 $[T_1^{(r)},T_2^{(r)}]$ is isomorphic to an $r$-fold product of closed intervals each isomorphic to 
 $[T_1,T_2]$.
 
 
 The shape $\lambda^{(r)}$  for $T_1^{(r)}$ and $T_2^{(r)}$ will have $2r $ rows and $4r$ columns.  Specifically, $\lambda^{(r)}$  is the smallest shape that contains the skew shape $\nu^{(r)}  = \lambda^{(r)}  \setminus \rho^{(r)}  $ obtained by taking $r$ copies of $\lambda $ arranged so that the lower left corner of one copy of $\lambda $  touches the upper right corner of the next copy of $\lambda $ at a single point, with the  $r$ copies of $\lambda $ thus arranged from top right to lower left.  Now both $T_1^{(r)}$ and $T_2^{(r)}$ will have the letter $i$ in  each  box in row $i$ that belongs to the shape  $\rho^{(r)}$ that is deleted from the skew shape.   Thus, $T_1^{(r)}$ will coincide with $T_2^{(r)}$ in all these entries.  On the other hand, the upper right copy of $\lambda $ gets filled in with the same values in $T_1^{(r)}$ that it has in $T_1$ and with the same values in $T_2^{(r)}$ that it gets in $T_2$.  Now each time we move down from one copy of $\lambda $ to the next, as we proceed from top to bottom and from right to left through our skew shape,  we fill the new  copies  of $\lambda $ with the same values as those in the previous copy of $\lambda $, except that we add a fixed, sufficiently  large constant (e.g. 100)  to all the entries, in the new copy of $\lambda $ in comparison to their values in the previous copy of $\lambda $.   This will ensure 
 that  the interval $[T_1^{(r)},T_2^{(r)}]$ is isomorphic to the $r$-fold product 
 $[T_1,T_2]\times \cdots \times [T_1,T_2]$  with one copy of $[T_1,T_2]$ coming from each copy of the shape $\lambda $ along the lower right boundary of the shapes for $T_1^{(r)}$ and $T_2^{(r)}$.
 %
  %
 \end{proof}
 
 \begin{example}\label{non-lattice}
  {\rm Next we exhibit an example to show that the crystal posets are not always lattices.  Consider the crystal of SSYT of shape $(4,3)$ filled with $\{1,2,3,4\}$, and its elements
\[T=\tableau{{1}&{1}&{2}&{2}\\{2}&{3}&{4}}\,,\;\;\;\;S=\tableau{{1}&{1}&{1}&{2}\\{3}&{3}&{4}}\,,\;\;\;\;\mbox{covering }\:\tableau{{1}&{1}&{1}&{2}\\{2}&{3}&{4}}\,.\]
Now recall the structure of the interval $\left[\,\tableau{{1}&{1}&{1}&{2}\\{2}&{3}&{4}}\,,\;\tableau{{1}&{1}&{2}&{3}\\{3}&{4}&{4}}\,\right]$ in~Figure~\ref{badint}. We can see that $\tableau{{1}&{1}&{2}&{3}\\{3}&{4}&{4}}$ is a minimal upper bound for $T$ and $S$. However, we have the incomparable (also minimal) upper bound $\tableau{{1}&{2}&{2}&{3}\\{3}&{3}&{4}}=f_1 f_2^2(T)=f_2 f_1^2(S)$. Thus, the many techniques available for studying lattices are not at our disposal.} 

  \end{example}
 
 \color{black}
\section{M\"obius function  not $0,\pm 1$ guarantees relations amongst  crystal operators not implied by Stembridge's local relations} 
\color{black}
\label{SB-labeling}

Let us conclude with a result that will  tie  together our results, both positive and negative, from earlier sections.  Specifically, we will now make precise how  the M\"obius function may  be used, and indeed has been used, to discover new relations amongst crystal operators not implied by those giving rise to Stembridge's local structure for crystals of highest weight representations in the simply laced case.

First recall the following notions and results from \cite{HM} that we will rely upon.

\begin{definition}\label{lattice-labeling} 
{\rm 
An edge-labeling $\lambda $ of a finite lattice $L$ is a {\it lower $SB$-labeling} if it may be constructed as follows.  Begin with a label set $S$ such that there is a subset  $\{ \lambda_a | a\in A(L) \}  $ of $S$  whose members are in bijection with the set $A(L)$ of atoms of  $L$.
\begin{enumerate}
\item
No two labels upward from $\hat{0}$ to  distinct atoms may  be equal. 
This allows us to define  the label $\lambda_a $  on each   cover relation $\hat{0}\prec a$ as  the label corresponding to the  atom 
$a$.  
\item
Given any interval of the form 
$[\hat{0}, a_{i_1}\vee \cdots \vee a_{i_r} ] $ for  $ \{  a_{i_1},\dots
,a_{i_r} \}  \subseteq A(L)$, each of the saturated chains $M$ from $\hat{0} $ to $a_{i_1}\vee \cdots \vee
a_{i_r} $  has the property
that the set $\lambda (M)$ of labels occurring with positive multiplicity on $M$  is exactly 
$\{ \lambda_{a_{i_j}} | a_{i_j} \in A(L); 1\le j \le r \} $.
\end{enumerate}
When an edge labeling $\lambda $ for a finite lattice $L$  meets  these conditions upon restriction to each  closed interval of $L$, then we call such a labeling an {\it $SB$-labeling}.  We call a lattice  with an $SB$-labeling an {\it $SB$-lattice}. 
}
\end{definition}



\begin{definition}[\cite{HM}] 
{\rm 
 The {\it index $2$ formulation of   $SB$-labeling}  is an edge labeling on a  finite lattice $L$ satisfying the following conditions for each  $u,v,w \in L$ such that 
 $v$ and $w$ are distinct elements which each  cover $u$:
\begin{enumerate}
\item
$\lambda (u,v)\ne \lambda (u,w)$
\item
Each saturated chain on the interval $[u,v\vee w]$ uses both of these labels $\lambda (u,v)$ and $\lambda (u,w)$ a positive number of times.
\item
None of the saturated chains on the interval $[u,v\vee w]$ use any other types of labels besides $\lambda (u,v)$ and $\lambda (u,w)$.
\end{enumerate}
}
\end{definition}

Now let us recall the properties of $SB$-labelings that we will use.

\begin{theorem}[\cite{HM}]
An edge labeling on a finite lattice is an $SB$-labeling if and only if it satisfies the index $2$ formulation for $SB$-labeling.
\end{theorem}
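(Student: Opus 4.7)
My plan is to prove the two implications separately. For the forward direction, assume $\lambda$ is an $SB$-labeling and fix $u, v, w \in L$ with $v \ne w$ both covering $u$. Restrict $\lambda$ to the closed interval $[u, v \vee w]$; since $v$ and $w$ are distinct atoms there, condition (1) of the $SB$-labeling gives $\lambda(u,v) \ne \lambda(u,w)$, which is property (1) of the index $2$ formulation. Condition (2) of the $SB$-labeling, applied to $[u, v \vee w]$ with the atom collection $\{v, w\}$, says every saturated chain from $u$ to $v \vee w$ has label set exactly $\{\lambda(u,v), \lambda(u,w)\}$, which is the conjunction of properties (2) and (3) of the index $2$ formulation.

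For the reverse direction, assume the index $2$ formulation holds and verify both $SB$ conditions on each closed interval $[x, y]$. Condition (1) is immediate from index $2$ property (1) applied to pairs of atoms of $[x, \cdot]$. For condition (2), fix $w = a_{i_1} \vee \cdots \vee a_{i_r}$ a join of atoms of $[x, \cdot]$ lying in $[x, y]$, and induct on the rank $n$ of $[x, w]$. The base $n = 1$ is trivial; for $n = 2$, any atom $v = a_{i_k}$ satisfies $v \lessdot w$, and $v \vee a_{i_j} = w$ for every $j \ne k$, so index $2$ applied to $[x, v \vee a_{i_j}] = [x, w]$ identifies the labels on the chain $x \lessdot v \lessdot w$ as exactly $\{\lambda(x, v), \lambda(x, a_{i_j})\}$; index $2$ property (1) then forces $r = 2$ and the conclusion holds.

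For the inductive step with $n \ge 3$, given a saturated chain $M: x = u_0 \lessdot u_1 \lessdot \cdots \lessdot u_n = w$, the core strategy is to apply the index $2$ condition to each rank-$2$ sub-interval $[x, u_1 \vee a_{i_j}]$, which forces $u_1$ and $a_{i_j}$ to be the only atoms of $[x, \cdot]$ below $u_1 \vee a_{i_j}$ and constrains labels on any chain in this sub-interval to $\{\lambda(x, u_1), \lambda(x, a_{i_j})\}$. I would show separately that every label of $M$ takes the form $\lambda(x, a_{i_j})$ for some $j$, and conversely that every $\lambda(x, a_{i_j})$ appears on $M$. For the first part, apply the inductive hypothesis to $[u_1, w]$ (of rank $n-1$) and translate each label $\lambda(u_1, b)$ on the sub-chain into some $\lambda(x, a_{i_j})$ via the index $2$ condition on $[x, u_1 \vee b]$. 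For the second part, if some $\lambda(x, a_{i_j})$ fails to appear on $M$, take the smallest $k$ with $a_{i_j} \le u_k$, observe $u_{k-1} \vee a_{i_j} = u_k$, and use the index $2$ condition on $[x, u_1 \vee a_{i_j}]$ together with the inductive control of the labels on $M$ to derive a contradiction.

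The main technical obstacle will be the label-translation step in the first part: reconciling labels $\lambda(u_1, b)$ on the sub-chain $u_1 \lessdot \cdots \lessdot w$ with the target labels $\lambda(x, a)$ coming from atoms of $[x, \cdot]$. In particular, the cleanest application of the inductive hypothesis on $[u_1, w]$ requires knowing that $w$ remains a join of atoms of $[u_1, \cdot]$, which is not automatic; handling this may require an auxiliary lemma, or alternatively a direct diamond-swapping argument that repeatedly applies the rank-$2$ index $2$ condition to transform $M$ into a canonical chain while preserving its label set.
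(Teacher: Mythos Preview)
The paper does not actually prove this theorem: it is quoted from \cite{HM} and stated without proof, as background for Section~\ref{SB-labeling}. So there is no proof in the paper against which to compare your attempt.

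Commenting on the proposal itself: your forward direction is clean and correct. For the reverse direction, your outline is reasonable but, as you yourself note, incomplete. The crux is exactly the obstacle you identify: to run the induction on $[u_1,w]$ you need $w$ to be a join of atoms of $[u_1,\cdot]$, and the label-translation step (matching labels $\lambda(u_1,b)$ with labels $\lambda(x,a_{i_j})$) is where the real work lies. A workable route is the ``diamond-swapping'' idea you mention at the end: given a saturated chain $M$ from $x$ to $w$ and an atom $a_{i_j}$ not equal to $u_1$, apply the index~$2$ condition to $[x,u_1\vee a_{i_j}]$ to rewrite the bottom of $M$ so that it passes through $a_{i_j}$; iterating this lets you bring any desired atom label to the bottom while preserving the label multiset, and from there the argument closes. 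Two smaller points to watch: (i) your induction is on ``rank'', but finite lattices need not be graded, so you should instead induct on the length of the specific chain $M$ (or on the number of elements in $[x,w]$); (ii) in your $n=2$ step the argument that $r=2$ is correct but should be stated as showing that any third atom would force two distinct atom labels to coincide with the single label $\lambda(v,w)$, violating index~$2$ property~(1).
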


\begin{theorem}[\cite{HM}]
Any finite lattice $L$ with an $SB$-labeling has the property that $\mu  (u,v)\in \{ 0,\pm 1\} $ for all 
$u\le v$ in $L$.
\end{theorem}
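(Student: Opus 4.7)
The plan is to derive the result from Rota's classical crosscut theorem, applied with the set of atoms as the crosscut, together with the observation that an SB-labeling forces the ``join of a subset of atoms'' map to be injective. Because an SB-labeling restricts to an SB-labeling on every closed subinterval by the very wording of Definition~\ref{lattice-labeling}, it suffices to prove $\mu(\hat{0}_L,\hat{1}_L) \in \{0,\pm 1\}$ for a finite SB-lattice $L$ of rank at least two; intervals of rank zero or one have $\mu \in \{1,-1\}$ trivially.

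First I would apply Rota's crosscut theorem with crosscut $C := A(L)$. Every maximal chain of $L$ begins with a cover relation from $\hat{0}$ to an atom, so $A(L)$ is indeed a crosscut, and the theorem yields
\[\mu(\hat{0},\hat{1}) \;=\; \sum_{\substack{S \subseteq A(L),\ |S| \ge 2 \\ \vee S = \hat{1}}} (-1)^{|S|}\,,\]
where the condition $\wedge S = \hat{0}$ that appears in the crosscut theorem is automatic for any $S$ consisting of two or more distinct atoms, and subsets of smaller size make no contribution.

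The crucial step is to use the SB-labeling to show that this sum has at most one term. Given any $S = \{a_{i_1},\ldots,a_{i_r}\} \subseteq A(L)$, applying the SB-labeling property to the interval $[\hat{0}, \vee S]$ (with $\vee S$ decomposed as this particular join of atoms) says that every saturated chain from $\hat{0}$ to $\vee S$ carries, with positive multiplicity, precisely the labels $\{\lambda_{a_{i_1}},\ldots,\lambda_{a_{i_r}}\}$. Since the labels $\lambda_a$ are distinct across distinct atoms, the label set on a saturated chain from $\hat{0}$ to $\vee S$ determines $S$. Hence distinct subsets of atoms produce distinct joins, so at most one $S \subseteq A(L)$ can satisfy $\vee S = \hat{1}$.

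The conclusion then splits into two cases. If $\vee A(L) = \hat{1}$, the unique contributing subset is $S = A(L)$, giving $\mu(\hat{0},\hat{1}) = (-1)^{|A(L)|} \in \{\pm 1\}$. Otherwise $\vee A(L) < \hat{1}$, and since $\vee S \le \vee A(L)$ for every $S \subseteq A(L)$, no subset joins to $\hat{1}$ and $\mu(\hat{0},\hat{1}) = 0$. Applying the identical reasoning to each closed subinterval $[u,v]$, with its inherited SB-labeling, yields $\mu(u,v) \in \{0,\pm 1\}$ throughout $L$. The main obstacle is really just the injectivity of the join-of-atoms map, which is the single place the full strength of the SB-labeling is invoked; once this is extracted, the argument is a short application of a classical theorem.
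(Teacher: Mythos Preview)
The paper does not actually prove this theorem; it is quoted from \cite{HM} and used as a black box in Section~\ref{SB-labeling}. So there is no in-paper proof to compare against.

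Your argument is correct. The injectivity of $S \mapsto \vee S$ on subsets of atoms follows cleanly from condition~(2) of Definition~\ref{lattice-labeling}: if $\vee S = \vee T$, then every saturated chain from $\hat{0}$ to this common element must simultaneously have label set $\{\lambda_a : a\in S\}$ and $\{\lambda_a : a\in T\}$, forcing $S=T$ by the distinctness of atom labels in condition~(1). With that injectivity in hand, Rota's crosscut theorem (with the atoms as crosscut) reduces $\mu(\hat{0},\hat{1})$ to a sum with at most one term, and your case split on whether $\vee A(L)=\hat{1}$ is the right way to finish. The reduction to $\mu(\hat{0},\hat{1})$ is justified exactly as you say, since the definition of an $SB$-labeling is phrased so that it automatically restricts to each closed interval.

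As for how this compares to \cite{HM}: the title of that paper (``posets with each interval homotopy equivalent to a sphere or a ball'') suggests the authors prove the stronger topological statement and read off the M\"obius function via reduced Euler characteristic, rather than going through the crosscut theorem directly. Your route is more elementary and gets to the M\"obius conclusion faster, at the cost of not yielding the homotopy type; for the purposes of Theorem~\ref{moebius-yields-crystal-relations}, only the M\"obius statement is needed, so your approach is entirely adequate here.
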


Next let us establish a new piece of terminology.

\begin{definition} 
{\rm 
Given distinct elements $x,y,z$ in a finite crystal graph given by a highest weight representation of simply laced type with $y = f_i (x)$ and $z = f_j (x)$, say that an upper bound $w$  for $y$ and $z$ is a {\it Stembridge local upper bound} if either  of the following conditions holds:
\begin{enumerate} 
\item 
  $w = f_j (y) = f_i (z)$ 
  \item 
$w = f_i f_j f_j (y) = f_j f_i f_i (z)$ and  $f_i f_j (x)\ne f_j f_i (x)$.
\end{enumerate}
Likewise, we call the consequent relations $f_if_j (x) = f_j f_i (x) $(in the first case) and $f_i f_j f_j f_i (x) = f_j f_i f_i f_j (x)$ (in the second case) among crystal operators  {\it Stembridge local relations}.  
}
\end{definition}

The critical observation behind  the main result of this section, Theorem ~\ref{moebius-yields-crystal-relations}, is the following implication.
Suppose that all crystals of highest weight representations  of finite,  simply laced type in fact 
 were lattices with the further property  that each triple $u,v,w$ of distinct elements with $v$ and $w$ both covering $u$ has unique least upper bound for $v$ and $w$ being a Stembridge local upper bound.  Then 
 the standard labeling of crystal graph edges by colors, namely the labeling with $u\prec f_i (u)$ colored $i$,  would be an $SB$-labeling for these lattices.
 This is explained within the proof of Theorem ~\ref{moebius-yields-crystal-relations}.

\begin{theorem}\label{moebius-yields-crystal-relations}
Given any $u<v$ in a crystal $L$ of a highest weight representation of finite, simply laced type such that 
$\mu (u,v)\not\in \{ 0, 1, -1\} $, then 
there must be a relation  within $[u,v]$  among crystal operators that is  not implied by the Stembridge local relations. 
\end{theorem}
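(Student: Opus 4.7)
The plan is to prove the contrapositive: if every relation among crystal operators holding within $[u,v]$ is implied by Stembridge's local relations, then the standard coloring of the cover relations of $[u,v]$ is an $SB$-labeling, and hence $\mu(u,v)\in\{0,\pm 1\}$ by the $SB$-labeling theorem from \cite{HM}. Thus the theorem reduces to verifying the index $2$ formulation of an $SB$-labeling on the interval $[u,v]$ under this hypothesis.

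First I would show that the assumption forces $[u,v]$ to behave like a lattice at the local level needed by the index $2$ formulation. Given any $x\in[u,v]$ with two distinct covers $y=f_i(x)$ and $z=f_j(x)$ in $[u,v]$, Corollary~\ref{corsl} produces a Stembridge local upper bound $w$ for $y$ and $z$ (of rank $1$ or $3$ above them). If $w$ were not the unique least upper bound of $y,z$ inside $[u,v]$, there would exist an incomparable minimal upper bound $w'$, and such an $w'$ would encode an identity among the crystal operators $f_i,f_j$ (and possibly others) that is manifestly not a consequence of the two Stembridge local relations governing the rank $2$ and rank $4$ diamonds at $x$. This contradicts the standing hypothesis, so $y\vee z=w$ exists in $[u,v]$ and equals the Stembridge local upper bound.

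Next I would verify the three clauses of the index $2$ formulation on each such interval $[x,y\vee z]$. Clause (1) is immediate from Stembridge axiom (P2), since distinct covers of $x$ carry distinct colors. For clauses (2) and (3), I appeal again to Corollary~\ref{corsl}: it classifies all saturated chains on $[x,w]$ and shows each such chain uses exactly the two colors $i,j$, each at least once. The only way an extra label could appear, or a chain using only one label could appear, is through an element of $[x,w]$ not predicted by the Stembridge local picture; and any such element (or chain) would again yield an identity among crystal operators not implied by the Stembridge local relations, contradicting the hypothesis. Applying the index $2$ characterization of $SB$-labelings and then the $SB$-labeling theorem from \cite{HM} to $[u,v]$ gives $\mu(u,v)\in\{0,\pm 1\}$, completing the contrapositive.

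The main obstacle is the second step above: rigorously ruling out the existence of ``extra'' elements or chains within $[x,y\vee z]$ using only the hypothesis that no relation beyond the Stembridge local relations holds in $[u,v]$. The subtle point is that an apparently unrelated element of $[x,y\vee z]$ carrying a foreign color, or a second minimal upper bound of $y$ and $z$, must be translated into a genuine operator identity $f_{a_1}\cdots f_{a_k}(x)=f_{b_1}\cdots f_{b_\ell}(x)$ that cannot be derived from iterated applications of the Stembridge square and hexagon relations. Making this translation precise, and verifying that the induced identity is truly outside the normal closure of Stembridge's relations, is where the argument requires the most care; it is here that the theory of $SB$-labelings in \cite{HM} does the work, since its index $2$ formulation isolates exactly the local obstructions whose failure produces such relations.
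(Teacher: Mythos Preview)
Your strategy is the same as the paper's: use the $SB$-labeling machinery from \cite{HM} to force $\mu(u,v)\in\{0,\pm 1\}$ under the hypothesis that no non-Stembridge relation appears in $[u,v]$, and then take the contrapositive.  Where your argument diverges from the paper is in what you actually verify, and there is a genuine gap.

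The $SB$-labeling theorems you invoke are stated for finite \emph{lattices}.  You only argue that whenever $y=f_i(x)$ and $z=f_j(x)$ both cover a common $x\in[u,v]$, the Stembridge local upper bound is the unique least upper bound of $y$ and $z$.  This is a statement about joins of pairs that happen to cover a common element; it does not by itself make $[u,v]$ a lattice, and without the lattice property you cannot appeal to \cite{HM}.  The paper closes exactly this gap with a separate minimality argument: assuming the lattice property fails somewhere in $[u,v]$, one chooses a failure of smallest possible size (minimizing first the cardinality of a subinterval containing a bad pair, then the quantity $|(u',x]|+|(u',y]|$) and shows that the failure can be pushed down to a pair $x',y'$ that both cover a common element $u'$ yet have no join.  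Only then does one obtain a relation not implied by Stembridge's local relations.  Your proposal skips this reduction entirely.

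By contrast, the issue you flag as the ``main obstacle'' --- ruling out extra elements or foreign colors inside $[x,y\vee z]$ once you know the join is the Stembridge local upper bound --- is actually the easier part.  Weight considerations force every cover relation in $[x,w]$ to carry color $i$ or $j$, and the paper's Lemma~\ref{local-least-upper-bounds} checks directly (using the Stembridge axioms and the existence of $\hat 0$) that nothing can sit strictly between $x$ and the Stembridge local upper bound other than the elements already predicted by Corollary~\ref{corsl}.  So the real work lies in the lattice reduction, not in the local analysis of $[x,y\vee z]$.
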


\begin{proof}
First notice that the obvious labeling would satisfy the requirements for the index 2 formulation of $SB$-labeling if the interval $[u,v]$ were a lattice with the further property (SUB) of  $a\prec b$ and $a\prec c$ for $b\ne c$ implying that the unique least upper bound for $b$ and $c$ is a Stembridge local upper bound. Lemma ~\ref{local-least-upper-bounds} would give this property (SUB) in the event that  $L$ as a whole is a lattice.  
 The existence of an $SB$-labeling on a finite lattice implies that  $\mu (u,v)\in \{ 0,1,-1\} $  for each $u\le v$ in the lattice.   Thus, our hypothesis about the M\"obius function not taking one of these values  ensures that  either the lattice property fails within $[u,v]$ or that the unique least upper bound for  some such $b$ and $c$ within $[u,v]$ is something other than the Stembridge local upper bound.

 Recall e.g. from \cite{St}  that any finite  meet semi-lattice with a unique maximal element is a lattice, ensuring for $[u,v]$ not a lattice that there must be a pair of elements with two different least upper bounds.  
 Assuming the lattice property fails within $[u,v]$, we 
will prove the existence of a pair of elements $u' < v'$ within $[u,v]$ such that there are distinct elements $a_1,a_2$ both covering $u'$ such that $a_1$ and $a_2$ do not have a least upper bound.   Among all pairs $u' < v'$ within $[u,v]$, choose one having as few elements as possible within $[u',v']$ with the further property that there exists a pair of  elements $x,y\in [u',v']$ that do not have a unique least upper bound within $[u',v']$ but also requiring that there does not exist such a pair $x,y$ which both cover $u'$.  Let $m =  |(u',x]|$ and let $n= |(u',y]|$.  Choose $x$ and $y$ within $[u',v']$ so as to minimize $m+n$ among pairs that do not have a unique least upper bound within $[u',v]$.  Let $x', y'$ be this pair, and let $m_1$ and $m_2$ be two different least upper bounds for $x'$ and $y'$ within $[u',v']$.


   Suppose that  $m+n>2$. 
   This allows us to  assume  without loss of generality that  there also exists $x'' \in (u',v']$ such that $x''$ is covered by $x'$.  Notice that  $m_1$ and $m_2$ for $x'$ and $y'$ are also distinct, incomparable  upper bounds for $x''$ and $y'$.  By  our minimality assumption for $m+n$ and the fact that $|(u',x'']| < |(u',x']|$,  there must  be a unique least upper bound $m_3$ for $x''$ and $y'$.  This implies  $m_3 < m_1$ and $m_3 < m_2$.  But then $x'\not\le m_3$ since otherwise $m_1$ and $m_2$ could not be least upper bounds for $x'$ and $y'$.  On the other hand, $m_3\not\le x'$ since we do have $y'\le m_3$ and also have $u\not\le x'$.  But then $x'$ and $m_3$ are distinct incomparable elements of the interval $[x'',y']$ which have two different, incomparable  least upper bounds within $[x'',y]$, namely $x'$ and $m_3$.  But $[x'',y']$ has strictly fewer elements than $[u',v']$ contradicting our inductive hypothesis that  $|[u',v']|$ is  as small as possible.  The upshot is that there must be a pair of distinct elements  $x',y'$ within $[u,v]$ that both cover a common element $u' $ and where $x'$ and $y'$ do not have a unique least upper bound within $[u,v]$. This guarantees a relation that is not implied by the Stembridge local relations. 
\end{proof}

\begin{lemma}\label{local-least-upper-bounds}
Let $L$ be any poset whose Hasse diagram is  a crystal  graph given by a highest weight representation of  simply laced type.   
For any three distinct elements $x, f_i(x)$ and $f_j(x)$ of $L$, the Stembridge local upper bound $y$ for $f_i(x)$ and $f_j(x)$ 
 is a least upper bound for $f_i (x)$ and $f_j(x)$.  In other words, there does not exist any element $y'$ satisfying $x < y' < y$.
\end{lemma}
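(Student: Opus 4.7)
The plan is to reduce to the two local structure cases provided by Corollary~\ref{corsl} for simply laced crystals, and then to use the fact (Remark~\ref{labint}(1)) that crystal posets are graded in order to control ranks. Let $r := {\rm rank}(x)$. The Stembridge local upper bound $y$ sits at rank $r+2$ in Stembridge case (i) and at rank $r+4$ in case (ii). Suppose that $y'$ is any common upper bound of $f_i(x)$ and $f_j(x)$ satisfying $y' \le y$; I will show that $y' = y$, which gives the least upper bound conclusion and proves the lemma.

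Since $f_i(x)$ and $f_j(x)$ both have rank $r+1$ and are distinct (as $i \ne j$), any common upper bound $y'$ must have rank at least $r+2$. In case (i), combining this with $y' \le y$ and ${\rm rank}(y) = r+2$ immediately forces $y' = y$ by gradedness. In case (ii), the possibilities are ${\rm rank}(y') \in \{r+2,\, r+3,\, r+4\}$. If ${\rm rank}(y') \in \{r+2, r+3\}$, then saturated chains from $f_i(x)$ (respectively from $f_j(x)$) up to $y'$ would have length $1$ or $2$, both at most $3$. But Corollary~\ref{corsl}(ii) asserts that the only saturated chains of length at most $3$ from $f_i(x)$ or $f_j(x)$ to a common upper bound are the two specific length-three zig-zag chains terminating at $y$. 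This rules out ${\rm rank}(y') \in \{r+2, r+3\}$, and so ${\rm rank}(y') = r+4 = {\rm rank}(y)$; again gradedness forces $y' = y$.

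The main obstacle is interpreting Corollary~\ref{corsl}(ii) with enough care: the clause ``no other such chains of length at most $3$'' must be read as asserting that any common upper bound of $f_i(x)$ and $f_j(x)$ reachable from each of them via a saturated chain of length at most $3$ must coincide with the specific $y$ described, with chains matching the ones given. Once this reading is granted, the rest of the argument is just gradedness bookkeeping. (The ``in other words'' reformulation in the lemma statement should accordingly be understood as implicitly ranging over $y'$ that are upper bounds of $f_i(x)$ and $f_j(x)$, since otherwise $f_i(x)$ itself would trivially lie in $(x,y)$ in case (ii).)
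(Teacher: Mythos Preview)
Your approach is sound in spirit and genuinely different from the paper's, but the key step leans on an overreading of Corollary~\ref{corsl}(ii). That clause says the two described chains are the only saturated chains of length at most $3$ from $v=f_i(x)$ or $w=f_j(x)$ \emph{to the specific element $y$}; it does \emph{not} assert that every common upper bound reachable by short chains must equal $y$. (Indeed, Example~\ref{non-lattice} shows other minimal common upper bounds can exist, so your stated reading is false in general.) What you actually need---and what the corollary does give---is the weaker reading, together with one more observation: since $y'\le y$, we have $y'\in[v,y]\cap[w,y]$; uniqueness of the chain from $v$ to $y$ forces $[v,y]=\{v,v_1,v_2,y\}$, and likewise $[w,y]=\{w,w_1,w_2,y\}$; and these two sets meet only at $y$ (different weights handle $v_2\ne w_2$, and $v_1=w_1$ would put us in case~(i)). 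Once you make that explicit, the argument is complete.

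By contrast, the paper does not invoke Corollary~\ref{corsl} at all: it enumerates the possible rank~$r+2$ and rank~$r+3$ elements above $f_i(x)$ by hand, and for the delicate possibility $f_jf_i^2(x)=f_i^2f_j(x)$ it produces an infinite descending chain contradicting the existence of $\hat{0}$. Your route is shorter and cleaner because it packages exactly that delicate case inside the ``no other such chains'' assertion (which ultimately encodes the same Stembridge structure), whereas the paper re-derives the obstruction from scratch. The trade-off is that your proof depends on reading Corollary~\ref{corsl}(ii) carefully and supplying the small intersection argument above; with that fix, it is a correct and more economical alternative.
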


\begin{proof}
The case with $y = f_i f_j (x) = f_j  f_i (x)$ follows immediately from the distinctness of $f_i (x)$ and $f_j (x)$.  Now suppose $f_i f_j (x) \ne f_j f_i (x)$, which necessarily implies 
$y = f_i f_j f_j f_i (x) = f_j f_i f_i f_j (x)$.   We may then use weight considerations to see that all cover relations within the interval $[x,y]$ are colored either $i$ or $j$, implying there are at most two cover relations upward from any given poset element staying in $[x,y]$  and likewise at most two cover relations downward from any element staying within $[x,y]$.  Without loss of generality, we only need to consider as potential  least upper bounds for $f_i (x)$ and $f_j (x)$ within $[x,y]$ the elements (a) $f_i f_i (x)$,  (b) $f_i f_j f_i (x)$,   (c)  $f_j f_i f_i (x)$, and  (d) $f_j f_j f_i(x)$.  We rule out (a) by weight considerations, which ensure $f_j (x)\not \le f_i f_i (x)$.  Notice that (b) would imply $f_j (x) \le f_i f_j f_i (x)$,  which by weight considerations  forces 
$y' = f_i f_j f_i (x)  = f_i f_i f_j (x)$;  the fact that there is only one cover relation downward from $y'$ colored $i$ then implies $f_j f_i (x) = f_i f_j (x)$, contradicting our assumption that these are not equal.  
This leaves (c) and (d), which will turn out to be equivalent cases to each other, to be handled next.

Suppose we are in case (c), namely  suppose we have  
least upper bound  $y' = f_j f_i f_i (x)$ for $f_i (x)$ and $f_j (x)$.  Then weight considerations together with $f_j (x) \le f_j f_i f_i (x)$ force the relation $y' = x_j f_i f_i (x) = f_i f_i f_j (x)$, also putting us in  case (d).  But then $e_i (y')\ne 0$ and $e_j (y')\ne 0$ since these equal $f_i f_j (x)$ and $f_i f_i (x)$, respectively.  We cannot have $e_i e_j (y') = e_j e_i (y')$, since one may see immediately by drawing the diagram that  this would imply $f_i f_j (x) = f_j f_i (x)$, contradicting our hypothesis that these are not equal.
  Thus, Stembridge's local structure applied to the operators $e_i, e_j$  forces  there to be a nonzero  element we denote by 
$x'$  satisfying $x' = e_i e_j e_j e_i (y') = e_j e_i e_i e_j (y')$.  But then $x' = e_j (x) $ implying
$f_i f_i f_j f_j (x') = f_i f_i f_j (x) = f_j f_i f_i (x) = f_j f_i f_i f_j (x') = y' =  f_i f_j f_j f_i (x')$.  The fact that there is only one cover relation downward from $y'$ colored $i$ means that the  relation 
$f_i f_i f_j f_j (x') = f_i f_j f_j f_i (x') \ne 0$ we just deduced  also implies  $f_i f_j f_j (x') = f_j f_j f_i (x')\ne 0$.  Moreover, we must also have  $f_j f_i (x') \ne f_i f_j (x')$,  since equality would imply  $f_i f_j (x) = f_j f_i (x)$, as is seen by drawing the diagram; however, we assumed in  our inductive  hypotheses  that these are not equal.  Thus, the existence of $x$ having $f_i f_j (x)  \ne f_j f_i (x) $ with $f_i f_j (x) \ne 0$ and $f_j f_i (x)\ne 0$ and also having $f_j f_i f_i (x) = f_i f_i f_j (x)\ne 0$ implies that $x$ covers an element $x' = e_j (x) \ne 0$ where $x'$ satisfies this exact same list of hypotheses that $x$ satisfies, but with the roles of $i$ and $j$ reversed.  In particular, we may keep repeating this argument to obtain an infinite chain $x, x', x'',\dots $ of nonzero elements by letting 
$x' = e_j (x), x'' = e_i (x'), x''' = e_j (x''), $ and continuing in this fashion to alternate applying $e_i $ and $e_j$.
The existence of this infinite chain contradicts the existence of a unique minimal element $\hat{0}$ in  our crystal graph.  But any highest weight representation is generated by a single highest weight vector giving rise to a unique minimal element $\hat{0}$ in the crystal,  completing the proof.
\end{proof}

\end{document}